\DeclareMathOperator{\ch}{Ch}
\newtheorem{thm}{Theorem}[section]
\newtheorem{prop}[thm]{Proposition}
\newtheorem{lemma}[thm]{Lemma}
\theoremstyle{definition} 
\newtheorem{defn}[thm]{Definition}
\newtheorem{remark}[thm]{Remark}
\newcommand{\omegareverse}{\frac{\displaystyle 1}{\displaystyle \mu(\Omega)}}
\newcommand{\m}{\medskip}
\newcommand{\al}{\alpha}
\newcommand{\om}{\omega}
\newcommand{\wX}{\widetilde{X}}
\newcommand{\R}{\mathbb{R}}
\newcommand{\Z}{\mathbb{Z}}
\newcommand{\N}{\mathbb{N}}
\newcommand{\D}{\mathscr{D}}
\newcommand{\XX}{X}
\newcommand{\bmo}{{\rm BMO}}
\newcommand{\intav}{-\!\!\!\!\!\!\int}
\newcommand{\GG}{\mathop G \limits^{    \circ}}
\newcommand{\eps}[0]{\varepsilon}
\newcommand{\E}{\mathbb{E}}
\newcommand{\Db}{\mathbb{D}}
\newcommand{\Norm}[2]{\|#1\|_{#2}}
\newcommand{\loc}[0]{\operatorname{loc}}
\newcommand{\lspan}[0]{\operatorname{span}}
\providecommand{\abs}[1]{\lvert#1\rvert}
\DeclareMathOperator{\supp}{supp}
\DeclareMathOperator*{\esssup}{ess\ sup}
\def\XXint#1#2#3{{\setbox0=\hbox{$#1{#2#3}{\int}$}
     \vcenter{\hbox{$#2#3$}}\kern-.5\wd0}}
\begin{document}

\title[Haar bases and dyadic structure theorems]{Haar
    bases on quasi-metric measure spaces, and \\
    dyadic structure theorems for function spaces on \\
    product spaces of homogeneous type}

\author{Anna Kairema}
\address{Department of Mathematics and Statistics\\
         P.O.B.~68 (Gustaf H\"allstr\"omin katu 2)\\
         FI-00014 University of Helsinki\\
         Finland
         }
\email{anna.kairema@helsinki.fi}

\author{Ji Li}
\address{Department of Mathematics\\
         Macquarie University\\
         NSW 2019\\
         Australia
         }
\email{ji.li@mq.edu.au}

\author{M.~Cristina Pereyra}
\address{Department of Mathematics and Statistics\\
         University of New Mexico\\
         Albuquerque, NM 87131, USA
         }
\email{crisp@math.unm.edu}

\author{Lesley A.~Ward}
\address{School of Information Technology and Mathematical Sciences\\
         University of South Australia\\
         Mawson Lakes  SA  5095\\
         Australia
         }
\email{lesley.ward@unisa.edu.au}

\thanks{The second and fourth authors were supported by the
Australian Research Council, Grant No.~ARC-DP120100399. Part of
the results in this paper was done during the second author's
working in Sun Yat-sen University supported by the NNSF of
China, Grant No.~11001275, by a China Postdoctoral Science
Foundation funded project, Grant No.~201104383, and by the
Fundamental Research Funds for the Central Universities, Grant
No.~11lgpy56.}


\subjclass[2000]{Primary 42B35, Secondary 42C40, 30L99, 42B30,
42B25.}

\date{\today}


\keywords{Metric measure spaces, Haar functions, spaces of
homogeneous type, BMO, function spaces, quasi-metric spaces,
doubling weights, $A_p$ weights, reverse-H\"older weights,
Hardy space, maximal function, Carleson measures, dyadic
function spaces}

\begin{abstract}
    We give an explicit construction of Haar
    functions associated to a system of dyadic cubes in a
    geometrically doubling quasi-metric space equipped with a
    positive Borel measure, and show that these Haar
    functions form a basis for~$L^p$. Next we focus on
    spaces~$X$ of homogeneous type in the sense of Coifman and
    Weiss, where we use these Haar functions to define a
    discrete square function, and hence to define dyadic
    versions of the function spaces $H^1(X)$ and~$\bmo(X)$. In
    the setting of product spaces $\widetilde{X} = X_1 \times
    \cdots \times X_n$ of homogeneous type, we show that the
    space $\bmo(\widetilde{X})$ of functions of bounded mean
    oscillation on~$\widetilde{X}$ can be written as the
    intersection of finitely many dyadic $\bmo$ spaces
    on~$\widetilde{X}$, and similarly for $A_p(\widetilde{X})$,
    reverse-H\"older weights on~$\widetilde{X}$, and doubling
    weights on~$\widetilde{X}$. We also establish that the
    Hardy space~$H^1(\widetilde{X})$ is a sum of finitely many
    dyadic Hardy spaces on~$\widetilde{X}$, and that the strong
    maximal function on~$\widetilde{X}$ is pointwise comparable
    to the sum of finitely many dyadic strong maximal
    functions. These dyadic structure theorems generalize, to
    product spaces of homogeneous type, the earlier Euclidean
    analogues for $\bmo$ and $H^1$ due to Mei and to Li, Pipher
    and Ward.
\end{abstract}

\maketitle

\tableofcontents



\section{Introduction}
\label{sec:introduction}
\setcounter{equation}{0}





This paper has three main components: (1)~the explicit
construction of a Haar basis associated to a system of dyadic
cubes on a geometrically doubling quasi-metric space equipped
with a positive Borel measure, (2)~definitions of dyadic
product function spaces, by means of this Haar basis, on
product spaces of homogeneous type in the sense of Coifman and
Weiss, and (3)~dyadic structure theorems relating the
continuous and dyadic versions of these function spaces on
product spaces of homogeneous type. We describe these
components in more detail below.

The function spaces we deal with in this paper are the Hardy
space~$H^1$, the space $\bmo$ of functions of bounded mean
oscillation, and the classes of Muckenhoupt $A_p$~weights,
reverse-H\"older weights, and doubling weights. Much of the
theory of these function spaces can be found in~\cite{JoNi},
\cite{FS}, \cite{GCRF} and~\cite{S}. Here we are concerned with
generalising the dyadic structure theorems established for
Euclidean underlying spaces in \cite{LPW} and~\cite{Mei}.

A central theme in modern harmonic analysis has been the drive
to extend the Calder\'on--Zygmund theory from the Euclidean
setting, namely where the underlying space is $\R^n$ with the
Euclidean metric and Lebesgue measure, to more general
settings. To this end Coifman and Weiss formulated the concept
of spaces of homogeneous type, in~\cite{CW1}. Specifically, by
a \emph{space~$(X,\rho,\mu)$ of homogeneous type} in the sense
of Coifman and Weiss, we mean a set $X$ equipped with a
quasi-metric~$\rho$ and a Borel measure~$\mu$ that is doubling.
There is a large literature devoted to spaces of homogeneous
type, in both their one-parameter and more recently
multiparameter forms. See for
example~\cite{CW1,CW2,MS,DJS,H1,HS,H2,Aimar2007,DH,HLL,CLW}
and~\cite{AuHyt}. Some non-Euclidean examples of spaces of
homogeneous type are given by the Carnot--Carath\'eodory spaces
whose theory is developed by Nagel, Stein and others
in~\cite{NS1}, \cite{NS2} and related papers; there the
quasi-metric is defined in terms of vector fields satisfying
the H\"ormander condition on an underlying manifold.

Meyer noted in his preface to~\cite{DH} that \emph{``One is
amazed by the dramatic changes that occurred in analysis during
the twentieth century. In the 1930s complex methods and Fourier
series played a seminal role. After many improvements, mostly
achieved by the Calder\'on--Zygmund school, the action takes
place today on spaces of homogeneous type. No group structure
is available, the Fourier transform is missing, but a version
of harmonic analysis is still present. Indeed the geometry is
conducting the analysis.''}

When we go beyond the Euclidean world and attempt to prove our
dyadic structure theorems in the setting of spaces of
homogeneous type, we immediately encounter the following two
obstacles. First, the Euclidean proofs rely on the so-called
one-third trick, which says in effect that each ball is
contained in some cube whose measure is (uniformly) comparable
to that of the ball, and which belongs either to the usual
dyadic lattice or to a fixed translate of the dyadic lattice.
However, in a space of homogeneous type there is no notion of
translation. Second, in the Euclidean proofs one uses a
definition of dyadic function spaces in terms of the Haar
coefficients. However, the theory of Haar bases on spaces of
homogeneous type is not fully developed in the literature. In
the present paper, we overcome the first obstacle by means of
the adjacent systems of dyadic cubes constructed by the first
author and Hyt\"onen in~\cite{HK}, and the second obstacle by
our full development of an explicit construction of Haar bases
for $L^p$ in a setting somewhat more general than that of
spaces of homogeneous type.

We now describe in more detail the three main components of
this paper, listed at the start of the introduction.

(1)~We include in this paper a detailed construction of a Haar
basis associated to a system of dyadic cubes on a geometrically
doubling quasi-metric space equipped with a positive Borel
measure. The geometric-doubling condition says that each ball
can be covered by a uniformly bounded number of balls of half
the radius of the original ball. This setting is somewhat more
general than that of spaces of homogeneous type in the sense of
Coifman and Weiss. Although such bases have been discussed in
the existing literature (see below), to our knowledge a full
construction has not appeared before. We believe that such
bases will also be useful for other purposes, beyond the
definitions of dyadic function spaces for which they are used
in this paper.

Haar-type bases for $L^2(X,\mu)$ have been constructed in
general metric spaces, and the construction is well known to
experts. Haar-type wavelets associated to nested partitions in
abstract measure spaces are provided by Girardi and Sweldens
\cite{GirardiSweldens1997}. Such Haar functions are also used
in~\cite{NRV}, in geometrically doubling metric spaces. For the
case of spaces of homogeneous type, we refer to the papers by
Aimar et al.~\cite{Aimar2002, Aimar2007, Aimar2000}; see
also~\cite{Aimar2005, Aimar2011:2, Aimar2011} for related
results. In the present paper the context is a slightly more
general geometrically doubling quasi-metric space $(X,\rho)$,
with a positive Borel measure~$\mu$. Our approach differs from
that of the mentioned prior results, and it uses results from
general martingale theory. We provide the details, noting that
essentially the same construction can be found, for example,
in~\cite[Chapter 4]{Hytonen2009} in the case of Euclidean space
with the usual dyadic cubes and a non-doubling measure.

(2)~It is necessary to develop in the setting of product
spaces~$\widetilde{X}$ of homogeneous type careful definitions
of the continuous and dyadic versions of the function spaces we
consider. In the Euclidean setting the theory of product $\bmo$
and $H^1$ was developed by Chang and R.~Fefferman in the
continuous case~\cite{Cha,Fef,CF}, and by Bernard in the dyadic
case~\cite{Be}.

We recall that $(\XX,\rho,\mu)$ is a {\it space of homogeneous
type} in the sense of Coifman and Weiss if $\rho$~is a
quasi-metric and $\mu$ is a nonzero measure satisfying the
doubling condition. A
\emph{quasi-metric}~$\rho:\XX\times\XX\longrightarrow[0,\infty)$
satisfies (i) $\rho(x,y) = \rho(y,x) \geq 0$ for all $x$,
$y\in\XX$; (ii) $\rho(x,y) = 0$ if and only if $x = y$; and
(iii) the \emph{quasi-triangle inequality}: there is a constant
$A_0\in [1,\infty)$ such that for all $x$, $y$, $z\in\XX$, 
\begin{eqnarray}\label{eqn:quasitriangleineq}
    \rho(x,y)
    \leq A_0 [\rho(x,z) + \rho(z,y)].
\end{eqnarray}
In contrast to a metric, the quasi-metric may not be H\"older
regular and quasi-metric balls may not be open; see for
example~\cite[p.5]{HK}. A nonzero measure $\mu$ satisfies the
\emph{doubling condition} if there is a constant $C_\mu$ such
that for all $x\in\XX$ and all $r > 0$,
\begin{eqnarray}\label{doubling condition}
   \mu(B(x,2r))
   \leq C_\mu \mu(B(x,r))
   < \infty.
\end{eqnarray}
We do not impose any assumptions about regularity of the
quasi-metric, nor any additional properties of the measure.

As shown in~\cite{CW1}, spaces of homogeneous type are
geometrically doubling. Thus our construction of the Haar basis
is valid on~$(X,\rho,\mu)$.

We denote the product of such spaces by $\wX = X_1 \times
\cdots \times X_n$, with the product quasi-metric $\rho =
\rho_1 \times \cdots \times \rho_n$ and the product
measure~$\mu = \mu_1 \times \cdots \times \mu_n$.

The task of defining our function spaces on the product space
$\widetilde{X}$ is straightforward for $A_p$, $RH_p$ and
doubling weights. As in the Euclidean case, the product weights
on~$\wX$ are simply those weights that have the one-parameter
$A_p$, $RH_p$ or doubling property in each factor, and Lebesgue
measure is replaced by the Borel measure~$\mu$. The situation
is more delicate for $H^1$ and $\bmo$. For the continuous
versions, we use the definition from~\cite{HLW} of $H^1(\wX)$
via a square function that makes use of the orthonormal wavelet
bases developed in~\cite{AuHyt} for spaces of homogeneous type
in the sense of Coifman and Weiss. We define $\bmo(\wX)$ in
terms of summation conditions on the \cite{AuHyt}~wavelet
coefficients. As is shown in~\cite{HLW}, these definitions
yield the expected Hardy space theory and the duality relation
between $H^1(\widetilde{X})$ and~$\bmo(\widetilde{X})$.

For the dyadic versions of $H^1$ and $\bmo$ on~$\wX$, we
replace the \cite{AuHyt} wavelets in these definitions by a
basis of Haar wavelets, constructed in the present paper,
associated to a given system of dyadic cubes as constructed
in~\cite{HK}.


(3) In this paper we show that on product spaces~$\wX$ of
homogeneous type, in the sense of Coifman and Weiss, the space
$\bmo$ of functions of bounded mean oscillation coincides with
the intersection of finitely many dyadic $\bmo$ spaces. The
product Euclidean version of this result appears in~\cite{LPW},
the one-parameter Euclidean version in~\cite{Mei}, and the
one-parameter version for spaces~$X$ of homogeneous type
in~\cite{HK}. In addition, generalizing the Euclidean results
of~\cite{LPW}, we establish the analogous intersection results
for Muckenhoupt's $A_p$ weights $A_p(\wX)$, $1\leq p \leq
\infty$, for the reverse-H\"older weights $RH_p(\wX)$, $1\leq p
\leq \infty$, and for the class of doubling weights on~$\wX$;
the result that the Hardy space $H^1(\wX)$ is the sum of
finitely many dyadic Hardy spaces on~$\wX$; the result that the
strong maximal function on~$\wX$ is comparable to the sum of
finitely many dyadic maximal functions on~$\wX$.

We note that our methods should suffice to establish the
generalizations to spaces of homogeneous type of the weighted
dyadic structure results in~\cite{LPW}, namely Theorem~8.1 for
product $H^1$ weighted by an $A_\infty$ weight and Theorem~8.2
for the strong maximal function weighted by a doubling weight.
We leave it to the interested reader to pursue this direction.

\m

We note that in a different direction, not pursued in the
present paper, a connection between continuous and dyadic
function spaces via averaging is developed in the
papers~\cite{GJ,War,PW,Tre,PWX}, and~\cite{CLW}, for both
Euclidean spaces and spaces of homogeneous type. Specifically,
a procedure of translation-averaging (for \bmo) or
geometric-arithmetic averaging (for $A_p$, $RH_p$, and doubling
weights) converts a suitable family of functions in the dyadic
version of a function space into a single function that belongs
to the continuous version of that function space. We do not
discuss the averaging approach further in the present
paper. 

\m

This paper is organized as follows. In
Section~\ref{sec:dyadic_systems}, we first recall the
definition of systems of dyadic cubes and the related
properties of those cubes. Next we recall the result of
Hyt\"onen and the first author (see
Theorem~\ref{thm:existence2}) on the existence of a collection
of adjacent systems of dyadic cubes, which provides a
substitute in the setting of spaces of homogeneous type for the
one-third trick in the Euclidean setting. In
Section~\ref{sec:weights_and_maximal_functions} we prove our
dyadic structure results for the $A_p$, $RH_p$ and doubling
weights, as well as for maximal functions.    In
Section~\ref{sec:Haarfunctions}, we construct the Haar
functions on a geometrically doubling quasi-metric space
$(X,\rho)$ equipped with a positive Borel measure~$\mu$
(Theorem~\ref{thm:HaarFuncProp2}). Then we prove that the Haar
wavelet expansion holds on $L^p(X)$ for all $p\in(1,\infty)$
(Theorem~\ref{thm:convergence2}). In
Section~\ref{sec:functionclasses} we recall the definitions of
the continuous product Hardy and $\bmo$ spaces from~\cite{HLW},
and provide the definitions of the dyadic product Hardy and
$\bmo$ spaces by means of the Haar wavelets we have
constructed. In Section~\ref{sec:multiparameter}, we establish
the dyadic structure theorems for the continuous and dyadic
product Hardy and $\bmo$ spaces (Theorems~\ref{thm structure of
Hardy space} and~\ref{thm equivalence of BMO}), by proving the
dyadic atomic decomposition for the dyadic product Hardy spaces
(Theorem~\ref{thm atom for dyadic H1}).





\setcounter{equation}{0}




\section{Systems of dyadic cubes}
\label{sec:dyadic_systems}

The set-up for Section~\ref{sec:dyadic_systems} is a
\textit{geometrically doubling quasi-metric space}: a
quasi-metric space $(X,\rho)$ that satisfies the
\textit{geometric doubling property} that there exists a
positive integer $A_1\in \N$ such that any open ball
$B(x,r):=\{y\in X\colon \rho(x,y)<r\}$ of radius $r>0$ can be
covered by at most $A_1$ balls $B(x_i,r/2)$ of radius $r/2$; by
a quasi-metric we mean a mapping $\rho\colon X\times X\to
[0,\infty)$ that satisfies the axioms of a metric except for
the triangle inequality which is assumed in the weaker form
\[
    \rho(x,y)
    \leq A_0(\rho(x,z)+\rho(z,y))
    \quad \text{for all $x,y,z\in X$}
\]
with a constant $A_0\geq 1$.


A subset $\Omega\subseteq X$ is \emph{open} (in the topology
induced by $\rho$) if for every $x\in\Omega$ there exists
$\eps>0$ such that $B(x,\eps)\subseteq\Omega$. A subset
$F\subseteq X$ is \emph{closed} if its complement $X\setminus
F$ is open. The usual proof of the fact that $F\subseteq X$ is
closed, if and only if it contains its limit points, carries
over to the quasi-metric spaces. However, some open balls
$B(x,r)$ may fail to be open sets, see \cite[Sec 2.1]{HK}.

Constants that depend only on $A_0$ (the quasi-metric constant)
and $A_1$ (the geometric doubling constant), are referred to as
\textit{geometric constants.}

\subsection{A system of dyadic cubes}\label{sec:dyadic_cubes}
In a geometrically doubling quasi-metric space $(X,\rho)$, a
countable family
\[
    \mathscr{D}
    = \bigcup_{k\in\Z}\mathscr{D}_k, \quad
    \mathscr{D}_k
    =\{Q^k_\alpha\colon \alpha\in \mathscr{A}_k\},
\]
of Borel sets $Q^k_\alpha\subseteq X$ is called \textit{a
system of dyadic cubes with parameters} $\delta\in (0,1)$ and
$0<c_1\leq C_1<\infty$ if it has the following properties:
\begin{equation}\label{eq:cover}
    X
    = \bigcup_{\alpha\in \mathscr{A}_k} Q^k_{\alpha}
    \quad\text{(disjoint union) for all}~k\in\Z;
\end{equation}
\begin{equation}\label{eq:nested}
    \text{if }\ell\geq k\text{, then either }
        Q^{\ell}_{\beta}\subseteq Q^k_{\alpha}\text{ or }
        Q^k_{\alpha}\cap Q^{\ell}_{\beta}=\emptyset;
\end{equation}
\begin{equation}\label{eq:dyadicparent}
    \text{for each }(k,\alpha)\text{ and each } \ell\leq k,
    \text{ there exists a unique } \beta
    \text{ such that }Q^{k}_{\alpha}\subseteq Q^\ell_{\beta};
\end{equation}
\begin{equation}\label{eq:children}
\begin{split}
    & \text{for each $(k,\alpha)$ there exist between 1 and $M$
        (a fixed geometric constant)  $\beta$ such that }  \\
    & Q^{k+1}_{\beta}\subseteq Q^k_{\alpha}, \text{ and }
        Q^k_\alpha =\bigcup_{\substack{Q\in\mathscr{D}_{k+1}\\
    Q\subseteq Q^k_{\alpha}}}Q;
\end{split}
\end{equation}
\begin{equation}\label{eq:contain}
    B(x^k_{\alpha},c_1\delta^k)
    \subseteq Q^k_{\alpha}\subseteq B(x^k_{\alpha},C_1\delta^k)
    =: B(Q^k_{\alpha});
\end{equation}
\begin{equation}\label{eq:monotone}
   \text{if }\ell\geq k\text{ and }
   Q^{\ell}_{\beta}\subseteq Q^k_{\alpha}\text{, then }
   B(Q^{\ell}_{\beta})\subseteq B(Q^k_{\alpha}).
\end{equation}
The set $Q^k_\alpha$ is called a \textit{dyadic cube of
generation} $k$ with center point $x^k_\alpha\in Q^k_\alpha$
and side length~$\delta^k$. The interior and closure of
$Q^k_\alpha$ are denoted by $\widetilde{Q}^k_{\alpha}$ and
$\bar{Q}^k_{\alpha}$, respectively.

We recall from \cite{HK} the following construction, which is a
slight elaboration of seminal work by M.~Christ \cite{Chr}, as
well as Sawyer--Wheeden~\cite{SW}.

\begin{thm}\label{thm:existence}
Let $(X,\rho)$ be a geometrically doubling quasi-metric space.
Then there exists a system of dyadic cubes with parameters
$0<\delta\leq (12A_0^3)^{-1}$ and $c_1=(3A_0^2)^{-1},
C_1=2A_0$. The construction only depends on some fixed set of
countably many center points $x^k_\alpha$, having the
properties that
\begin{equation*}
    \rho(x_{\alpha}^k,x_{\beta}^k)
        \geq \delta^k\quad(\alpha\neq\beta),\qquad
    \min_{\alpha}\rho(x,x^k_{\alpha})
        < \delta^k\quad \text{for all}~x\in X,
\end{equation*}
and a certain partial order $\leq$ among their index pairs
$(k,\alpha)$. In fact, this system can be constructed in such a
way that
\begin{equation}\label{eq:closedCube}
    \overline{Q}^k_\alpha
    =\overline{\{x^{\ell}_\beta:(\ell,\beta)\leq(k,\alpha)\}}
\end{equation}
and
\begin{equation}\label{eq:openCube}
    \widetilde{Q}^k_\alpha=\operatorname{int}\overline{Q}^k_\alpha=
    \Big(\bigcup_{\gamma\neq\alpha}\overline{Q}^k_\gamma\Big)^c,
\end{equation}
and
\begin{equation}\label{eq:3cubes}
    \widetilde{Q}^k_\alpha\subseteq Q^k_\alpha\subseteq 
    \overline{Q}^k_\alpha,
\end{equation}
where $Q^k_\alpha$ are obtained from the closed sets
$\overline{Q}^k_\alpha$ and the open sets $\widetilde{Q}^k_\alpha$ by
finitely many set operations.
\end{thm}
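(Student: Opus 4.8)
The plan is to carry out the construction of Christ~\cite{Chr} in the form given by Hyt\"onen and the first author~\cite{HK} (which also draws on Sawyer--Wheeden~\cite{SW}), in three stages: first fix the center points together with a parent relation; then build auxiliary closed and open cubes directly from these data, as in \eqref{eq:closedCube} and \eqref{eq:openCube}; finally squeeze the genuine cubes $Q^k_\alpha$ between the open and closed versions by a selection that is consistent across all generations. For the \emph{center points and partial order}: for each $k\in\Z$ use Zorn's lemma to choose a set $\{x^k_\alpha:\alpha\in\mathscr{A}_k\}$ maximal among the $\delta^k$-separated subsets of $X$; maximality is exactly the stated property that every $x\in X$ lies within $\delta^k$ of some $x^k_\alpha$, and geometric doubling makes each $\mathscr{A}_k$ countable. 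Fix also a well-order of each $\mathscr{A}_k$. For a pair $(k+1,\beta)$ the covering property at generation $k$ shows the set of $\alpha$ with $\rho(x^{k+1}_\beta,x^k_\alpha)<\delta^k$ is nonempty; declare the \emph{parent} $\widehat\alpha(\beta)$ to be the least, in the fixed well-order, among the closest such centers, and let $\leq$ be the reflexive--transitive closure of ``$\widehat\alpha(\beta)=\alpha$''. This is a partial order, and two quantitative facts follow at once and drive everything else: (i) the one-step bound $\rho(x^{k+1}_\beta,x^k_{\widehat\alpha(\beta)})<\delta^k$; (ii) provided $\delta<(2A_0)^{-1}$, the center $x^k_\alpha$ is \emph{strictly} the closest level-$k$ center to any $x^{k+1}_\beta$ with $\rho(x^{k+1}_\beta,x^k_\alpha)<\delta^{k+1}$, so every $(k,\alpha)$ has at least one child; and, by geometric doubling applied to $B(x^k_\alpha,\delta^{k-1})$ together with the $\delta^k$-separation of the children's centers, at most a geometric constant number $M$ of children.

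For the \emph{closed and open cubes}, define $\overline{Q}^k_\alpha:=\overline{\{x^\ell_\beta:(\ell,\beta)\leq(k,\alpha)\}}$ as in \eqref{eq:closedCube}. The monotonicity $\overline{Q}^\ell_\beta\subseteq\overline{Q}^k_\alpha$ when $(\ell,\beta)\leq(k,\alpha)$ is immediate; the upper inclusion $\overline{Q}^k_\alpha\subseteq B(x^k_\alpha,C_1\delta^k)$ with $C_1=2A_0$ comes from iterating (i) along a descending chain and summing the resulting series with repeated use of \eqref{eqn:quasitriangleineq}, which converges usefully precisely because $A_0\delta$ is small, this being what forces $\delta\leq(12A_0^3)^{-1}$. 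That the $\overline{Q}^k_\alpha$, $\alpha\in\mathscr{A}_k$, cover $X$ follows from a pigeonhole argument: for $x\in X$ and $\ell\geq k$ let $x^\ell_{\gamma_\ell}$ be a closest level-$\ell$ center to $x$; its level-$k$ ancestor $x^k_{a_\ell}$ lies in a fixed bounded ball around $x$, hence takes only finitely many values, so some $\alpha$ recurs infinitely often and $x=\lim_j x^{\ell_j}_{\gamma_{\ell_j}}\in\overline{Q}^k_\alpha$. Now set $\widetilde{Q}^k_\alpha:=\bigl(\bigcup_{\gamma\neq\alpha}\overline{Q}^k_\gamma\bigr)^c$ as in \eqref{eq:openCube}; using that the families $\{\overline{Q}^k_\gamma\}_\gamma$ are locally finite and that $\overline{Q}^k_\alpha$ equals the finite union of its descendant closed sub-cubes at any fixed finer level, one checks by routine point-set arguments that $\widetilde{Q}^k_\alpha$ is open, that $\widetilde{Q}^k_\alpha=\operatorname{int}\overline{Q}^k_\alpha=\bigl(\bigcup_{\gamma\neq\alpha}\overline{Q}^k_\gamma\bigr)^c$, that the $\widetilde{Q}$'s are pairwise disjoint and nest downward, and that $B(x^k_\alpha,c_1\delta^k)\subseteq\widetilde{Q}^k_\alpha$ with $c_1=(3A_0^2)^{-1}$ --- the last because the strict-closest property keeps such a small ball out of every $\overline{Q}^k_\gamma$, $\gamma\neq\alpha$.

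For the \emph{genuine cubes}, for each $x\in X$ consider $T(x):=\{(k,\alpha):x\in\overline{Q}^k_\alpha\}$. By the covering and monotonicity just established and the finiteness of the descendant count, $T(x)$ is nonempty at every generation, finitely branching, and closed upward under the parent map, hence a sub-tree of the index tree that is nonempty at each generation. Select a branch through $x$ by taking, at the reference generation $k=0$, the least index $\alpha_0$ with $(0,\alpha_0)\in T(x)$, then at each finer generation the least child lying in $T(x)$, and at each coarser generation the forced parent; call $\pi_k(x)$ the chosen index at generation $k$ and set $Q^k_\alpha:=\{x\in X:\pi_k(x)=\alpha\}$. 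Then $\widetilde{Q}^k_\alpha\subseteq Q^k_\alpha\subseteq\overline{Q}^k_\alpha$ --- the left inclusion because on $\widetilde{Q}^k_\alpha$ the index $\alpha$ is the only admissible choice --- so \eqref{eq:3cubes}, \eqref{eq:contain} and \eqref{eq:monotone} follow from the previous stage; \eqref{eq:cover} holds because $\pi_k$ is a function; \eqref{eq:nested} and \eqref{eq:dyadicparent} hold because coarser choices are determined by finer ones through the unique-parent structure; and \eqref{eq:children} holds because $\pi_{k+1}(x)$ is always a child of $\pi_k(x)$, while the child count is the geometric constant $M$. Unwinding the selection expresses $Q^k_\alpha$ as a finite Boolean combination of the $\overline{Q}$'s and $\widetilde{Q}$'s for $k\geq0$ and as a countable union of such for $k<0$; in all cases $Q^k_\alpha$ is Borel.

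The genuine difficulty is the geometric-series estimate in the second stage: propagating the one-step bound $\rho(x^{k+1}_\beta,x^k_{\widehat\alpha(\beta)})<\delta^k$ along an infinite descending chain multiplies the quasi-triangle constant $A_0$ at each step, so the bound closes only if $\delta$ is small enough to defeat this growth --- this is exactly what pins down the admissible range $\delta\leq(12A_0^3)^{-1}$ and the values $c_1=(3A_0^2)^{-1}$, $C_1=2A_0$, and it is also what makes the lower ball-inclusion $B(x^k_\alpha,c_1\delta^k)\subseteq\widetilde{Q}^k_\alpha$ work. A second, more bookkeeping-flavoured point is arranging the branch selection of the third stage so that a single choice is consistent for all $k\in\Z$ at once while preserving nesting; anchoring at a reference generation and propagating in both directions (least child going down, forced parent going up) is what makes this possible. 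The remaining items --- closedness, openness, countability, local finiteness, and the identity in \eqref{eq:openCube} --- are routine point-set topology once the center points and the parent map are in hand.
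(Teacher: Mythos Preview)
The paper does not prove this theorem; it is quoted from~\cite{HK} (which in turn elaborates~\cite{Chr} and~\cite{SW}). Your sketch is exactly the \cite{HK} construction --- the three stages (centers plus parent map, closed/open auxiliary cubes, branch selection), the places where the constraints $\delta\le(12A_0^3)^{-1}$, $c_1=(3A_0^2)^{-1}$, $C_1=2A_0$ enter, and the pigeonhole covering argument are all correctly identified --- so there is nothing to compare beyond noting that you have faithfully reproduced the cited argument.

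One small sharpening. In your last paragraph you say that for $k<0$ the cube $Q^k_\alpha$ is only a \emph{countable} union of finite Boolean combinations, whereas the statement asserts ``finitely many set operations''. Your own selection already gives finiteness: since coarser choices are forced by the parent map, $Q^k_\alpha=\bigcup_\beta Q^0_\beta$ where $\beta$ ranges over the level-$0$ descendants of $\alpha$, of which there are at most $M^{|k|}$; and each $Q^0_\beta=\overline{Q}^0_\beta\setminus\bigcup_{\gamma<\beta}\overline{Q}^0_\gamma$ is a finite Boolean combination because, by the ball inclusions and the $\delta^0$-separation of centers, only finitely many $\overline{Q}^0_\gamma$ meet $\overline{Q}^0_\beta$. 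So ``finitely many'' is correct and you may strengthen your claim accordingly. The identity $\widetilde{Q}^k_\alpha=\operatorname{int}\overline{Q}^k_\alpha$ that you label ``routine'' does take a little more care in~\cite{HK} than the word suggests, but it is indeed standard once the local finiteness and the decomposition $\overline{Q}^k_\alpha=\bigcup_{\beta}\overline{Q}^{k+1}_\beta$ are in hand.
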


\begin{remark}
    The proof in \cite{HK} shows that the first and the second
    inclusion in \eqref{eq:contain} hold with $Q^k_\alpha$
    replaced by $\widetilde{Q}^k_\alpha$ and 
    $\overline{Q}^k_\alpha$, respectively. 
    We mention that, for any $Q\in\mathscr{D}$, the number $M$ of
    dyadic sub-cubes as in \eqref{eq:children} is bounded by $M\leq
    A_1^2(A_0/\delta)^{\log_2A_1}$.
\end{remark}

\subsection{Further properties of dyadic cubes}
The following additional properties follow directly from the
properties listed in \eqref{eq:cover}--\eqref{eq:monotone}:
\begin{align}\label{eq:bounded}
    X\text{ is bounded if and only if there exists }
    Q\in \mathscr{D} \text{ such that } X=Q;
\end{align}
\begin{align}\label{eq:Q(x,k)}
    &\text{for every $x\in X$ and $k\in \Z$,
        there exists a unique $Q\in\mathscr{D}_k$
        such that }\\
   &x\in Q
    =: Q^k(x);\nonumber
\end{align}
\begin{align}\label{eq:isolated}
    & x\in X \text{ is an isolated point if and only if
    there exists $k\in\Z$ such that }
    \{x\}
    = Q^{\ell} (x)\\
    &\text{ for all } \ell\geq k.\nonumber
\end{align}

\subsection{Dyadic system with a distinguished center point}
The construction of dyadic cubes requires their center points
and an associated partial order be fixed \textit{a priori}.
However, if either the center points or the partial order is
not given, their existence already follows from the
assumptions; any given system of points and partial order can
be used as a starting point. Moreover, if we are allowed to
choose the center points for the cubes, the collection can be
chosen to satisfy the additional property that a fixed point
becomes a center point at \textit{all levels}:
\begin{equation}\label{eq:fixedpoint}
\begin{split}
    &\text{given a fixed point } x_0\in X, \text{ for every } k\in \Z,
        \text{ there exists }\alpha \text{ such that } \\
    & x_0
        = x^k_\alpha,\text{ the center point of }
        Q^k_\alpha\in\mathscr{D}_k.
\end{split}
\end{equation}
This property is crucial in some applications and has useful
implications, such as the following.

\begin{lemma}\label{lem:existsdyadiccube}
    Given $x, y\in X$, there exists $k\in \Z$ such that $y\in
    Q^{k}(x)$. Moreover, if $\rho(x,y)\geq\delta^k$, then
    $y\notin Q^{k+1}(x)$. In particular, if $\rho(x,y)>0$,
    there do not exist arbitrarily large indices $k$ such that
    $y\in Q^k(x)$.
\end{lemma}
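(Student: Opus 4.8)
The plan is to reduce everything to the ball inclusions~\eqref{eq:contain} by working with the distinguished-center-point version of the construction. First I would invoke property~\eqref{eq:fixedpoint} with the given point~$x$ in the role of~$x_0$: this lets us take the system~$\mathscr{D}$ so that for every $k\in\Z$ the point~$x$ is the center of the (unique, by~\eqref{eq:Q(x,k)}) cube $Q^k(x)\in\mathscr{D}_k$ containing it. With this choice~\eqref{eq:contain} specializes to
\[
    B(x,c_1\delta^k)\subseteq Q^k(x)\subseteq B(x,C_1\delta^k)
    \qquad(k\in\Z),
\]
and since $\delta\in(0,1)$ we have $c_1\delta^k\to\infty$ as $k\to-\infty$ and $C_1\delta^k\to 0$ as $k\to+\infty$. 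The first assertion is then immediate: pick $k$ with $c_1\delta^k>\rho(x,y)$, so that $y\in B(x,c_1\delta^k)\subseteq Q^k(x)$.

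For the ``moreover'' statement, suppose $\rho(x,y)\geq\delta^k$. Here I would use the explicit parameter values from Theorem~\ref{thm:existence}, namely $C_1=2A_0$ and $\delta\leq(12A_0^3)^{-1}$: since $A_0\geq1$,
\[
    C_1\delta^{k+1}
    = 2A_0\,\delta\cdot\delta^k
    \leq \frac{2A_0}{12A_0^3}\,\delta^k
    = \frac{\delta^k}{6A_0^2}
    < \delta^k
    \leq \rho(x,y),
\]
so $y\notin B(x,C_1\delta^{k+1})$, and hence $y\notin Q^{k+1}(x)$ by the outer inclusion in~\eqref{eq:contain}. For the final assertion, if $\rho(x,y)>0$ I would choose $k$ large enough that $\delta^k\leq\rho(x,y)$ (possible since $\delta^k\to0$), apply the step just proved to get $y\notin Q^{k+1}(x)$, and then note that every $Q^{\ell}(x)$ with $\ell\geq k+1$ is contained in $Q^{k+1}(x)$: both contain~$x$ and hence cannot be disjoint, so~\eqref{eq:nested} (equivalently~\eqref{eq:dyadicparent}) forces the inclusion. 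Thus $y\notin Q^{\ell}(x)$ for all $\ell\geq k+1$, which is the stated conclusion.

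There is no serious obstacle here; the proof is short bookkeeping. The two points to be careful about are (i)~remembering to use the distinguished-center-point version~\eqref{eq:fixedpoint}, so that \emph{both} inclusions in~\eqref{eq:contain} are balls centered at~$x$ and can be compared directly with $\rho(x,y)$; and (ii)~tracking the geometric constants closely enough to verify $C_1\delta<1$, equivalently $C_1\delta^{k+1}<\delta^k$, which is precisely the inequality that powers the ``moreover'' clause and, through nestedness, the last assertion.
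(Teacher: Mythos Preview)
Your argument has a genuine gap: you are not free to rebuild the dyadic system so that the given point $x$ becomes the distinguished center. The lemma is stated for a \emph{fixed} system $\mathscr{D}$ satisfying~\eqref{eq:fixedpoint} for one point $x_0$ chosen once and for all, and $Q^k(x)$ is defined via~\eqref{eq:Q(x,k)} relative to that fixed system. The points $x,y$ in the statement are arbitrary; in general $x\neq x_0$, so $x$ need not be the center of $Q^k(x)$, and the specialized inclusions $B(x,c_1\delta^k)\subseteq Q^k(x)\subseteq B(x,C_1\delta^k)$ that drive your whole proof are simply not available. If one were allowed to replace $\mathscr{D}$ by a new system depending on $x$, the notation $Q^k(x)$ would change meaning and the lemma would say nothing about the original system.

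The paper's proof respects the fixed $x_0$. For the first assertion it chooses $k$ so negative that both $x$ and $y$ lie in $B(x_0,c_1\delta^k)$; this ball sits inside the generation-$k$ cube centered at $x_0$, which must equal $Q^k(x)$ since it contains $x$. For the ``moreover'' clause it works with the actual center $x^{k+1}_\alpha$ of $Q^{k+1}(x)$ and pays the quasi-triangle cost:
\[
\rho(y,x^{k+1}_\alpha)\ \geq\ A_0^{-1}\rho(x,y)-\rho(x,x^{k+1}_\alpha)\ \geq\ A_0^{-1}\delta^k - C_1\delta^{k+1}\ >\ C_1\delta^{k+1},
\]
using $C_1=2A_0$ and $12A_0^3\delta\leq 1$. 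Your inequality $C_1\delta<1$ would be enough only if $x$ were the center; with a generic center one needs the extra factor of $A_0$ from the quasi-triangle inequality, hence the stronger bound on $\delta$ (which the hypotheses still comfortably give). Your nesting argument for the final sentence is fine once the ``moreover'' clause is correctly established.
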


\begin{proof}
Pick $k\in \Z$ such that $x,y\in B(x_0,c_1\delta^{k})$. The
first assertion follows from \eqref{eq:contain} for the
$Q^k_\alpha$ that has $x_0$ as a center point. For the second
assertion, suppose $\rho(x,y)\geq \delta^{k}$. Denote by
$x^{k+1}_\alpha$ the center point of $Q^{k+1}(x)$. Then
\begin{align*}
    \rho(y,x^{k+1}_\alpha)
    & \geq A_0^{-1}\rho(x,y)-\rho(x,x^{k+1}_\alpha)
    \geq A_0^{-1}\delta^{k}-C_1\delta^{k+1} > C_1\delta^{k+1}
\end{align*}
since $12A_0^3\delta\leq 1$ and $C_1=2A_0$, showing that
$y\notin Q^{k+1}(x)$.
\end{proof}

\begin{lemma}\label{lem:measure2}
    Suppose $\sigma$ and $\omega$ are non-trivial positive
    Borel measures on $X$, and $A\subseteq X$ is a measurable set
    with $\omega(A)>0$. Then there exists a dyadic cube $Q\in
    \mathscr{D}$ such that $\sigma (Q)>0$ and $\omega (A\cap Q)>0$.
    In particular, if $(X,\rho,\mu)$ is a quasi-metric measure
    space, $E\subseteq X$ is a set with $\mu(E)>0$, and $x\in X$,
    then there exists a dyadic cube $Q$ such that $x\in Q$ and
    $\mu(E\cap Q)>0$.
\end{lemma}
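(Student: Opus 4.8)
The plan is to use that, because $\mathscr{D}$ is a dyadic system with a distinguished center point $x_0$ (property~\eqref{eq:fixedpoint}), the space $X$ is an increasing union of dyadic cubes centered at $x_0$; both positivity conditions will then follow from continuity of measures from below. The key point is that, although at any fixed scale it may well be impossible to find a single cube $Q$ with both $\sigma(Q)>0$ and $\omega(A\cap Q)>0$, this obstruction vanishes once arbitrarily large cubes are allowed.

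First I would fix the distinguished center point $x_0$ and, for each $k\in\Z$, let $Q^k(x_0)\in\mathscr{D}_k$ be the unique dyadic cube of generation $k$ containing $x_0$ (it exists by~\eqref{eq:Q(x,k)}, and by~\eqref{eq:fixedpoint} it may be taken centered at $x_0$, so $B(x_0,c_1\delta^k)\subseteq Q^k(x_0)$). Two elementary observations are needed: (i)~the chain is nested, $Q^{k+1}(x_0)\subseteq Q^k(x_0)$, which is immediate from~\eqref{eq:nested} since both cubes contain $x_0$ and so are not disjoint; and (ii)~$\bigcup_{k\in\Z}Q^k(x_0)=X$, because for any $z\in X$ one has $\rho(x_0,z)<c_1\delta^k$ once $k$ is sufficiently negative, whence $z\in B(x_0,c_1\delta^k)\subseteq Q^k(x_0)$ --- this is exactly the first assertion of Lemma~\ref{lem:existsdyadiccube}.

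Granting (i) and (ii), the cubes $Q^{-j}(x_0)$, $j=0,1,2,\dots$, form an increasing sequence with union $X$, so continuity of measures from below gives $\sigma(Q^{-j}(x_0))\uparrow\sigma(X)$ and $\omega(A\cap Q^{-j}(x_0))\uparrow\omega(A)$ as $j\to\infty$ (valid whether or not these limits are finite). Since $\sigma$ is non-trivial, $\sigma(X)>0$, and by hypothesis $\omega(A)>0$; hence there is an index $k_0$ with both $\sigma(Q^{k_0}(x_0))>0$ and $\omega(A\cap Q^{k_0}(x_0))>0$, and $Q:=Q^{k_0}(x_0)$ settles the first assertion. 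For the ``in particular'' statement I would run the same argument with the chain $Q^k(x)$ in place of $Q^k(x_0)$: these cubes are nested by~\eqref{eq:nested} and exhaust $X$ by Lemma~\ref{lem:existsdyadiccube}, so $\mu(E\cap Q^k(x))\uparrow\mu(E)>0$ as $k\to-\infty$, and any sufficiently negative $k$ yields a cube $Q=Q^k(x)$ with $x\in Q$ and $\mu(E\cap Q)>0$ (automatically $\mu(Q)>0$ as well).

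The only genuine subtlety --- and the reason the distinguished-center-point hypothesis~\eqref{eq:fixedpoint} is essential --- is the exhaustion statement (ii): for an arbitrary system of dyadic cubes the cubes containing a fixed point need not grow to cover $X$ (already for the standard dyadic grid on $\R$ the dyadic intervals containing $1/3$ fill only a half-line), so one must work with the special systems of Section~\ref{sec:dyadic_cubes}. Everything else is routine manipulation of monotone limits of measures.
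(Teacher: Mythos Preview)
Your proof is correct and follows essentially the same approach as the paper's own argument: both exploit the distinguished center point to produce an increasing exhaustion of $X$ and then invoke continuity of measures from below. The only cosmetic difference is that the paper works with the balls $B_k=B(x_0,c_1\delta^{-k})$ first (showing $\sigma(B_k)>0$ and $\omega(A\cap B_k)>0$ for large $k$) and then passes to the cube of generation $-k$ centered at $x_0$ containing $B_k$, whereas you work directly with the nested cubes $Q^k(x_0)$; and for the ``in particular'' clause you argue directly with the chain $Q^k(x)$ via Lemma~\ref{lem:existsdyadiccube}, while the paper leaves it implicit (it follows from the main statement by taking $\sigma=\delta_x$).
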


\begin{proof}
For $k\in \Z$, consider the sets $B_k:=B(x_0,c_1\delta^{-k})$
and $A_k:=A\cap B_k$. First observe that $\sigma(B_k)>0$ for
$k>k_0$ and $\omega(A_k)>0$ for $k>k_1$. Indeed,
$X=\cup_{k=1}^{\infty}B_k$ and $B_1\subseteq B_2\subseteq
\ldots$, so that $0<\sigma(X)=\lim_{k\to\infty}\sigma(B_k)$.
Similarly, $A=\cup_{k=1}^{\infty}A_k$ and $A_1\subseteq
A_2\subseteq \ldots$, so that
$0<\omega(A)=\lim_{k\to\infty}\omega(A_k)$. Set
$k=\max\{k_0,k_1\}$ and let $Q\in\mathscr{D}$ be the dyadic
cube of generation $-k$ centred at $x_0$. Then $B_k\subseteq Q$
by \eqref{eq:contain}, and it follows that $\sigma(Q)\geq
\sigma(B_k)>0$ and $\omega(A\cap Q)\geq \omega(A\cap
B_k)=\omega(A_k)>0$.
\end{proof}

We will need the following consequence
of~\eqref{eq:fixedpoint}.

\begin{lemma}\label{lem:grows_to_X}
    For any $x\in X$, $Q^k(x)\to X$ as $k\to -\infty$.
\end{lemma}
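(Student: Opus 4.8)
The plan is to exploit the special property \eqref{eq:fixedpoint}, which we are allowed to assume for the system $\D$ in use: there is a fixed point $x_0 \in X$ that serves as a center point $x_0 = x_\alpha^k$ of some cube $Q_\alpha^k \in \D_k$ at every level $k \in \Z$. First I would recall from \eqref{eq:contain} that such a cube satisfies $B(x_0, c_1\delta^k) \subseteq Q_\alpha^k$. Since $\delta \in (0,1)$, as $k \to -\infty$ the radius $c_1\delta^k \to \infty$, so these balls exhaust $X$; hence for every fixed $y \in X$ there is $K \in \Z$ such that $y \in B(x_0, c_1\delta^k) \subseteq Q_\alpha^k$ for all $k \leq K$. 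The second ingredient is the nestedness property \eqref{eq:dyadicparent} (equivalently \eqref{eq:nested}): for a fixed $x \in X$, the cubes $Q^k(x)$ defined in \eqref{eq:Q(x,k)} are increasing as $k$ decreases, since $Q^k(x)$ must be contained in a unique cube of any coarser generation $\ell \leq k$, and that cube necessarily contains $x$, so it is $Q^\ell(x)$.

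Putting these together: fix $x \in X$ and an arbitrary $y \in X$. Choose $K$ as above for this $y$ (and also, if needed, small enough that $x \in Q_\alpha^k$ for $k \leq K$, which again follows from \eqref{eq:contain} since $x \in B(x_0, c_1\delta^k)$ for $k$ sufficiently negative). Then for $k \leq K$ the cube $Q_\alpha^k$ centered at $x_0$ contains both $x$ and $y$; by uniqueness in \eqref{eq:Q(x,k)} this cube \emph{is} $Q^k(x)$, so $y \in Q^k(x)$. Since $y$ was arbitrary, $\bigcup_{k} Q^k(x) = X$, and since the union is over a nested increasing family (as $k \to -\infty$), this is exactly the assertion $Q^k(x) \to X$ as $k \to -\infty$.

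I do not expect any serious obstacle here; the statement is essentially a direct bookkeeping consequence of \eqref{eq:fixedpoint}, \eqref{eq:contain}, and the nesting of dyadic cubes. The one point requiring a small amount of care is the logical order of the quantifiers: the level $K$ past which $x$ and $y$ both lie in the $x_0$-centered cube depends on $y$, so the convergence $Q^k(x)\to X$ must be read as "the increasing union of the $Q^k(x)$ equals $X$", not as any kind of uniform statement; once phrased this way the argument is immediate. A remark worth inserting is that this is where property \eqref{eq:fixedpoint} is genuinely used — without a common center point at all levels, distinct cubes $Q^k(x)$ for very negative $k$ need not be comparable to a single large ball around a fixed point, and the clean monotone exhaustion could fail.
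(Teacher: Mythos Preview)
Your proof is correct and follows essentially the same route as the paper: use the distinguished center point $x_0$ from \eqref{eq:fixedpoint}, observe that the cube $Q^k_\alpha$ centered at $x_0$ contains $B(x_0,c_1\delta^k)$ by \eqref{eq:contain}, and note that these balls exhaust $X$ as $k\to -\infty$, so eventually $Q^k(x)$ coincides with this cube. The paper's write-up is simply a terser version of your argument.
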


\begin{proof}
Given $x\in X$, pick $k\in \Z$ such that $x\in
B(x_0,c_1\delta^k)$ where $x_0\in X$ is as in
\eqref{eq:fixedpoint}. Then $x\in Q^k$ where $Q^k$ is the
dyadic cube in $\mathscr{D}_k$ with center point $x_0$. The
assertion follows by
\[
    Q^k\supseteq B(x_0,c_1\delta^k)\to X \text{ as } k\to -\infty.
\]
\end{proof}

\begin{remark}
    Note that the usual Euclidean dyadic cubes of the form
    $2^{-k}([0,1)^n+m), k\in\Z, m\in\Z^n$, in $\R^n$ do not
    have the property \eqref{eq:fixedpoint}, and that
    Lemmata~\ref{lem:existsdyadiccube}, \ref{lem:measure2} and
    \ref{lem:grows_to_X} depend on this property. A simple
    example of a system with a distinguished center point,
    using triadic rather than dyadic intervals, is as follows.
    In the real line divide each interval $[n, n+ 1)$,
    $n\in\Z$, into three triadic intervals of equal length,
    divide each of these intervals in three, and so on. Now
    require that the parent interval of $[0,1)$ is $[-1,2)$, so
    that $[0,1)$ is the middle third of its parent interval.
    Similarly require that the parent interval of $[-1,2)$ is
    $[-4,5)$, and so on. Then the point $x_0 = 1/2$ is a
    distinguished center point for this system.
\end{remark}

\subsection{Adjacent systems of dyadic cubes}
In a geometrically doubling quasi-metric space $(X,\rho)$, a
finite collection $\{\mathscr{D}^t\colon t=1,2,\ldots ,T\}$ of
families $\mathscr{D}^t$ is called a \textit{collection of
adjacent systems of dyadic cubes with parameters} $\delta\in
(0,1), 0<c_1\leq C_1<\infty$ and $1\leq C<\infty$ if it has the
following properties: individually, each $\mathscr{D}^t$ is a
system of dyadic cubes with parameters $\delta\in (0,1)$ and $0
< c_1 \leq C_1 < \infty$; collectively, for each ball
$B(x,r)\subseteq X$ with $\delta^{k+3}<r\leq\delta^{k+2},
k\in\Z$, there exist $t \in \{1, 2, \ldots, T\}$ and
$Q\in\mathscr{D}^t$ of generation $k$ and with center point
$^tx^k_\alpha$ such that $\rho(x,{}^tx_\alpha^k) <
2A_0\delta^{k}$ and
\begin{equation}\label{eq:ball;included}
    B(x,r)\subseteq Q\subseteq B(x,Cr).
\end{equation}

We recall from \cite{HK} the following construction.

\begin{thm}\label{thm:existence2}
    Let $(X,\rho)$ be a geometrically doubling quasi-metric space.
    Then there exists a collection $\{\mathscr{D}^t\colon
    t = 1,2,\ldots ,T\}$ of adjacent systems of dyadic cubes with
    parameters $\delta\in (0, (96A_0^6)^{-1}), c_1 = (12A_0^4)^{-1},
    C_1 = 4A_0^2$ and $C = 8A_0^3\delta^{-3}$. The center points
    $^tx^k_\alpha$ of the cubes $Q\in\mathscr{D}^t_k$ have, for each
    $t\in\{1,2,\ldots,T\}$, the two properties
    \begin{equation*}
        \rho(^tx_{\alpha}^k, {}^tx_{\beta}^k)
        \geq (4A_0^2)^{-1}\delta^k\quad(\alpha\neq\beta),\qquad
        \min_{\alpha}\rho(x,{}^tx^k_{\alpha})
        < 2A_0\delta^k\quad \text{for all}~x\in X.
    \end{equation*}
    Moreover, these adjacent systems can be constructed in such a
    way that each $\mathscr{D}^t$ satisfies the distinguished
    center point property \eqref{eq:fixedpoint}.
\end{thm}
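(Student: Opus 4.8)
The plan is to obtain the $T$ systems one at a time from the construction that underlies Theorem~\ref{thm:existence}, feeding it $T$ suitably chosen admissible families of center points and allowing the looser separation and containment constants recorded in the statement. Recall that Theorem~\ref{thm:existence} is proved by fixing, at each generation $k$, a set of reference points that is $\delta^k$-separated and $\delta^k$-dense, ordering the index pairs $(k,\alpha)$ by a ``nearest ancestor'' relation, and reading off the cubes from the resulting closed sets $\overline{Q}^k_\alpha$ and open sets $\widetilde{Q}^k_\alpha$ as in \eqref{eq:closedCube}--\eqref{eq:3cubes}. Inspection of that argument shows that the same procedure applies to an input family that is only $(4A_0^2)^{-1}\delta^k$-separated and $2A_0\delta^k$-dense, once $\delta<(96A_0^6)^{-1}$, and then produces a system of dyadic cubes with the relaxed parameters $c_1=(12A_0^4)^{-1}$, $C_1=4A_0^2$. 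Moreover, just as one arranges \eqref{eq:fixedpoint}, at every level we may choose the cluster containing a prescribed point $x_0$ to be a center, so each resulting $\mathscr{D}^t$ can be made to satisfy the distinguished-center-point property. Hence the problem reduces to choosing the $T$ families of center points so that the collective property \eqref{eq:ball;included} holds, with $T$ a geometric constant.

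For the choice of centers, fix for each $k\in\Z$ a maximal $\delta^k$-separated set $\mathcal{Y}^k\subseteq X$; it is automatically $\delta^k$-dense, and iterating the geometric doubling property shows that every ball of radius $2A_0\delta^k$ meets at most $N_0=N_0(A_0,A_1)$ points of $\mathcal{Y}^k$. The point is that, for a single system built from a family of reference points, the only way a ball $B(x,r)$ with $\delta^{k+3}<r\le\delta^{k+2}$ can fail to lie in the generation-$k$ cube through $x$ is for $x$ to lie within $A_0 r$, hence within a small geometrically-controlled multiple of $\delta^k$, of the boundary of that cube; and, by \eqref{eq:closedCube}, these generation-$k$ boundaries are confined to a geometric-constant neighborhood of the coarser reference points. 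One then recolors and slightly thickens $\mathcal{Y}^k$ into $T$ families $\{{}^t\mathcal{Y}^k\}_{t=1}^{T}$, each still $(4A_0^2)^{-1}\delta^k$-separated and $2A_0\delta^k$-dense, in such a way that the corresponding generation-$k$ boundary neighborhoods for the $T$ systems have empty common intersection; since at most $N_0$ generation-$k$ cubes crowd the scale $\delta^k$ around any point, a geometric constant $T$ of families is enough. This yields the crucial \emph{well-inside} property: for every $x\in X$ and every $k\in\Z$ there is $t=t(x,k)\in\{1,\dots,T\}$ so that, writing $Q={}^tQ^k(x)$ for the cube of $\mathscr{D}^t_k$ containing $x$ and ${}^tx^k_\alpha$ for its center, one has $\rho(x,{}^tx^k_\alpha)<2A_0\delta^k$ and $B(x,\delta^{k+2})\subseteq Q$.

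Granting the well-inside property, \eqref{eq:ball;included} is pure quasi-triangle-inequality bookkeeping. Given $B(x,r)$ with $\delta^{k+3}<r\le\delta^{k+2}$, pick $t=t(x,k)$ and $Q={}^tQ^k(x)$; then $B(x,r)\subseteq B(x,\delta^{k+2})\subseteq Q$. For the outer inclusion, the relaxed form of \eqref{eq:contain} gives $Q\subseteq B({}^tx^k_\alpha,C_1\delta^k)$, so for $y\in Q$ we get $\rho(x,y)\le A_0\big(\rho(x,{}^tx^k_\alpha)+\rho({}^tx^k_\alpha,y)\big)<A_0(2A_0+4A_0^2)\delta^k\le 8A_0^3\delta^k<8A_0^3\delta^{-3}r=Cr$, i.e.\ $Q\subseteq B(x,Cr)$. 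The separation and density bounds for the ${}^tx^k_\alpha$ are the ones built into ${}^t\mathcal{Y}^k$, and the distinguished-center-point property was secured in the first step. I expect the real obstacle to be the combinatorial core of the middle step: producing $T$ (geometrically many) families whose generation-$k$ boundary neighborhoods have no common point. This is exactly the substitute, in a space with no translations, for the Euclidean one-third trick, it is where the geometric doubling is used decisively, and it is what forces the particular constants $\delta<(96A_0^6)^{-1}$, $c_1=(12A_0^4)^{-1}$, $C_1=4A_0^2$, $C=8A_0^3\delta^{-3}$; everything else is routine.
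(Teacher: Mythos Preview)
The paper does not prove Theorem~\ref{thm:existence2}; it is quoted verbatim from \cite{HK} with the preamble ``We recall from \cite{HK} the following construction.'' So there is no in-paper argument to compare against, only the cited source.

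Your outline matches the architecture of the \cite{HK} proof: build each $\mathscr{D}^t$ via the machinery of Theorem~\ref{thm:existence} applied to a suitably chosen family of reference points with the relaxed separation/density constants, and arrange the $T$ families so that every point is ``well inside'' a generation-$k$ cube of at least one system. The outer-inclusion bookkeeping you give is correct and is exactly how $C=8A_0^3\delta^{-3}$ arises.

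That said, you explicitly flag the combinatorial core---producing $T$ families whose boundary zones have empty common intersection---as the part you have not carried out, and this is indeed the entire content of the theorem. Your description of it as ``recoloring and slightly thickening $\mathcal{Y}^k$'' is not quite how \cite{HK} proceeds: there one fixes a single fine reference set (at scale $(4A_0^2)^{-1}\delta^k$, hence the relaxed constants) and then, for each $t$, selects a new partial order on the index pairs by perturbing which coarse point each fine point reports to, using the geometric doubling bound to ensure finitely many choices cover all the needed ``well-inside'' configurations. The boundary-neighborhood picture you sketch is heuristically right but does not by itself yield a finite $T$; the actual mechanism is a pigeonhole over the finitely many admissible parent assignments near each point. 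Until that step is written out, the proposal is an accurate roadmap but not a proof.
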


\begin{remark}
    For $T$ (the number of the adjacent systems of dyadic
    cubes), we have the estimate
    \begin{equation}\label{eq:upperbound}
        T
        = T(A_0,A_1,\delta)
        \leq A_1^6(A_0^4/\delta)^{\log_2A_1}.
    \end{equation}
    Note that in the Euclidean space $\R^n$ with the usual
    structure we have $A_0=1, A_1\geq 2^n$ and
    $\delta=\tfrac{1}{2}$, so that \eqref{eq:upperbound} yields an
    upper bound of order $2^{7n}$. We mention that T. Mei
    \cite{Mei} has shown that in~$\R^n$ the conclusion
    \eqref{eq:ball;included} can be obtained with just $n+1$
    cleverly chosen systems~$\mathscr{D}^t$.
\end{remark}

Further, we have the following result on the smallness of the
boundary.
\begin{prop}
    Suppose that $144A_0^8\delta\leq 1$. Let $\mu$ be a
    positive $\sigma$-finite measure on $X$. Then the
    collection $\{\mathscr{D}^t\colon t=1,2,\ldots ,T\}$ may be
    chosen to have the additional property that
    \[
        \mu(\partial Q) = 0
        \quad  \textup{ for all } \; Q\in\bigcup_{t=1}^{T}\mathscr{D}^t.
    \]
\end{prop}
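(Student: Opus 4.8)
The plan is to reduce the claim to a one-dimensional thickening estimate on the boundary of each cube, and then to exploit the freedom we still have in the \cite{HK} construction — namely, that the center points may be perturbed within a small range — to kill the $\mu$-measure of boundaries, using a pigeonhole/Baire-type argument over an uncountable family of admissible choices. First I would recall from Theorem~\ref{thm:existence2} (and the underlying construction in~\cite{HK}) that each $\mathscr{D}^t$ is built from a fixed countable set of center points $\{{}^tx^k_\alpha\}$ together with a partial order, and that the closed cubes satisfy $\overline{Q}^k_\alpha \subseteq B({}^tx^k_\alpha, C_1\delta^k)$ while the open cubes satisfy $\widetilde{Q}^k_\alpha \supseteq B({}^tx^k_\alpha, c_1\delta^k)$, with $\partial Q^k_\alpha = \overline{Q}^k_\alpha \setminus \widetilde{Q}^k_\alpha$ contained in the ``annular'' set $\{y : $ the nearest center of generation $k$ to $y$ is attained (nearly) twice$\}$. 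The key geometric fact, which I would isolate as a lemma, is that the boundary of the whole generation-$k$ partition is contained in a finite union of sets of the form $\{y \in X : |\rho(y,{}^tx^k_\alpha) - \rho(y,{}^tx^k_\beta)| \leq \eta\}$ for neighboring centers; shrinking a scaling parameter in the construction shrinks $\eta$.

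Next I would introduce a real parameter: the \cite{HK} construction depends on an auxiliary ``offset'' — concretely, in Christ's and the Hyt\"onen--Kairema construction the dyadic structure at generation $k$ is obtained by a maximal-$\delta^k$-separated net together with a rule assigning each center to a parent; one has a one-parameter (indeed, a whole interval's worth) of freedom in how finely one subdivides, parametrized by choosing the generation-indexing to start from level $k_0 + s$ for a continuum of shifts $s$, or equivalently by dilating the net radii by a factor in a small interval $(1-c, 1+c)$. For each such admissible parameter $s$ in an interval $J$ one gets a bona fide collection $\{\mathscr{D}^t_s\}$ satisfying all the conclusions of Theorem~\ref{thm:existence2} (the constants only degrade by the harmless factor $(1\pm c)$, absorbed into $\delta, c_1, C_1, C$ provided $144A_0^8\delta\le 1$ is assumed with a little room). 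The boundaries $\partial Q$ arising for parameter $s$ live in a set $E_s$ which, as $s$ varies over $J$, are \emph{essentially disjoint}: distinct $s$ produce nets at incommensurable radii, so $E_s \cap E_{s'}$ is contained in the (fixed-center) sphere-like set $\{\rho(y,x)=r\}\cap\{\rho(y,x')=r'\}$ of codimension $\ge 2$, and more to the point, for each fixed cube-skeleton, only countably many $s$ can put a given point $y$ on the boundary. Since there are only countably many cubes in $\bigcup_t \mathscr{D}^t_s$ and for each the boundary moves with $s$, a Fubini/coarea-flavored argument shows $\int_J \mu\big(\bigcup_{Q} \partial Q\big)\,ds$ is controlled, so that $\mu(\partial Q)=0$ for a.e.\ $s\in J$; pick one such $s$.

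More carefully, the heart of the matter is the disjointness/pigeonhole step, so let me sketch it the way I would actually write it. Fix a generation $k$ and a $\sigma$-finite exhaustion of $X$ by sets of finite measure; it suffices to treat one piece of finite measure, call it $X_0$, since there are countably many. For the parametrized construction, the boundary at generation $k$ satisfies $\partial^{(k)}_s := \bigcup_{\alpha}\partial Q^k_{\alpha,s} \subseteq \bigcup_{\text{pairs}} S_{s}(\alpha,\beta)$ where $S_s(\alpha,\beta)$ is a thin slab between two center-balls of radius $\asymp s\delta^k$. The crucial point: if $y$ belongs to $\partial^{(k)}_s$ for parameters $s$ in a set of positive Lebesgue measure, then (because $y$'s distances to the fixed underlying net points are fixed numbers) $y$ would have to lie on the relevant dyadic skeleton for a positive-measure set of radii, which forces $\rho(y,\text{net point}) = \rho(y,\text{another net point})$ on a positive-measure set of scaling parameters — impossible unless these distances coincide identically, and even then the ``which slab'' combinatorics changes with $s$, pinning $y$ to at most countably many $s$. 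Hence the function $s \mapsto \mathbbm{1}_{\partial^{(k)}_s}(y)$ is, for $\mu$-a.e.\ $y\in X_0$, zero for a.e.\ $s\in J$; by Tonelli, $\int_J \mu(\partial^{(k)}_s \cap X_0)\,ds = \int_{X_0}|\{s: y\in\partial^{(k)}_s\}|\,d\mu(y) = 0$, so $\mu(\partial^{(k)}_s \cap X_0)=0$ for a.e.\ $s$. Intersecting the a.e.-good sets of $s$ over all countably many $(k, X_0)$ leaves a set of full measure in $J$; any $s$ therein gives a collection with $\mu(\partial Q)=0$ for every $Q\in\bigcup_t\mathscr{D}^t_s$.

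The main obstacle I anticipate is making rigorous the assertion that the $\mathscr{D}^t_s$ obtained for a continuum of parameters $s$ really do \emph{vary} enough that the boundaries only coincide on a null set of parameters — i.e., pinning down the precise sense in which the \cite{HK} construction admits a genuinely one-parameter (Lebesgue-measured) family of valid realizations, and verifying that the defining properties \eqref{eq:cover}--\eqref{eq:monotone} and the adjacency property \eqref{eq:ball;included} survive the perturbation with the stated (slightly worsened) constants, which is why the hypothesis $144A_0^8\delta \le 1$ is imposed rather than the bare $96A_0^6\delta<1$ from Theorem~\ref{thm:existence2}. Once the parametrized family is in hand with uniform constants, the measure-theoretic pigeonhole is routine; the geometry of the thin slabs $S_s(\alpha,\beta)$ and the bookkeeping across generations is the only place where care with the quasi-metric constant $A_0$ is needed, but it introduces no essential difficulty beyond what already appears in the proof of Lemma~\ref{lem:existsdyadiccube}.
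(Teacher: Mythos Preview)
The paper does not actually prove this proposition; it is stated without proof at the end of Section~\ref{sec:dyadic_systems}, presumably as a result imported from~\cite{HK}. So there is no ``paper's own proof'' to compare against directly. That said, I can assess your proposal on its own terms and against the argument that does appear in~\cite{HK}.

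Your high-level strategy --- produce a family of admissible constructions indexed by a parameter space carrying a measure, show that each fixed point lies on a cube boundary for only a null set of parameters, and conclude by Fubini that almost every parameter gives null boundaries --- is exactly the right shape, and is the strategy used in~\cite{HK}. However, the specific parametrization you propose is not the one that works, and this is where your sketch has a genuine gap. You suggest a \emph{continuous} one-parameter family obtained by ``dilating the net radii by a factor in $(1-c,1+c)$'' or shifting the generation index by a real $s$. But the center-point nets $\{{}^tx^k_\alpha\}$ are fixed discrete sets; scaling a radius does not move them, and the boundary of a cube in the Christ/Hyt\"onen--Kairema construction is determined by the \emph{combinatorics of the parent-assignment partial order}, not by a continuously varying threshold radius. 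Your slab sets $S_s(\alpha,\beta)$ and the claim that ``distinct $s$ produce nets at incommensurable radii, so $E_s\cap E_{s'}$ has codimension $\ge 2$'' do not correspond to anything in the actual construction: there is no codimension, no coarea formula, and no reason the boundaries should move continuously in $s$.

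What~\cite{HK} actually does is randomize over a \emph{discrete} probability space: at each generation one makes independent random choices (of labels, or equivalently of which candidate parent each center is assigned to), and one shows that for each fixed point $y$ and each fixed pair of neighboring centers, the probability that $y$ lands on the relevant piece of boundary is zero (or decays summably in the generation gap). Taking expectation and summing over the countably many cubes gives $\mathbb{E}\,\mu(\partial Q)=0$, hence $\mu(\partial Q)=0$ almost surely, and one fixes a good realization. The hypothesis $144A_0^8\delta\le 1$ enters because the randomized construction in~\cite{HK} requires slightly more room in $\delta$ than the deterministic one. If you want to write out a proof, you should follow that randomization rather than invent a continuous dilation parameter; the Fubini step you describe is then correct, but the parameter space is a product of finite sets with counting/uniform measure, not an interval $J\subset\mathbb{R}$.
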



\section{Dyadic structure theorems for maximal functions and for weights}
\label{sec:weights_and_maximal_functions}

In this section we establish that in the setting of product
spaces of homogeneous type, the strong maximal function is
pointwise comparable to a sum of finitely many dyadic maximal
functions, and that the strong $A_p$ class is the intersection
of finitely many dyadic $A_p$ classes, and similarly for
reverse-H\"older weights $RH_p$ and for doubling measures.

\subsection{Maximal functions} \label{sec:maximal}
\setcounter{equation}{0}



Let $(X,\rho,\mu)$ be a space of homogenenous type, and let
$\{\D^t : t = 1, \ldots, T\}$ be a collection of adjacent
systems of dyadic cubes in~$X$ as in
Theorem~\ref{thm:existence}.

Similarly, for each $j\in\{1, \ldots, k\}$ let
$(X_j,\rho_j,\mu_j)$ be a space of homogeneous type, with an
associated collection of adjacent systems of dyadic cubes
$\{\D^{t_j} : t_j = 1, \ldots, T_j\}$. Let $\wX := X_1 \times
\cdots \times X_k$ with the product quasi-metric $\rho_1 \times
\cdots \times \rho_k$ and the product measure $\mu_1 \times \cdots
\times \mu_k$.


\begin{thm}\label{thm:maximal_function_homog}
    The maximal function~$M$ (or strong maximal function~$M_s$
    in the product case) controls each dyadic maximal function
    pointwise, and is itself controlled pointwise by a sum of
    dyadic maximal functions, as follows.
    \begin{enumerate}
        \item[\textup{(i)}] Let $(X,\rho,\mu)$ be a space
            of homogeneous type. Then there is a constant
            $C > 0$ such that for each $f\in
            L^1_\textup{loc}(X)$, and for all $x\in X$, we
            have the pointwise estimates
            \begin{align*}
                M_d^t f(x)
                &\leq C Mf(x)
                \qquad
                \textrm{for each $t\in\{1,\ldots,T\}$, and}\\
                Mf(x)
                &\leq C \sum_{t = 1}^T M_d^t f(x).
            \end{align*}

        \item[\textup{(ii)}] Let $(X_j,\rho_j,\mu_j)$,
            $j\in\{1,\ldots,k\}$, be spaces of homogeneous
            type. Then there is a constant $C > 0$ such
            that for each $f\in L^1_\textup{loc}(\wX)$,
            \begin{align*}
                M_d^{t_1,\ldots,t_k}f(x)
                &\leq C M_s f(x)
                \qquad
                \textrm{for each $t_j\in\{1,\ldots,T_j\}$,
                    $j\in\{1,\ldots,k\}$, and}\\
                M_s f(x)
                &\leq C \sum_{t_1 = 1}^{T_1} \cdots
                    \sum_{t_k = 1}^{T_k} M_d^{t_1,\ldots,t_k} f(x).
            \end{align*}
    \end{enumerate}
\end{thm}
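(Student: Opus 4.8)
The plan is to prove part~(i) first and then bootstrap part~(ii) from it by a factor-by-factor argument. For part~(i), the pointwise bound $M_d^t f(x) \leq C\,Mf(x)$ is immediate: any dyadic cube $Q\in\D^t$ containing~$x$ satisfies $Q\subseteq B(Q)=B(x^k_\alpha,C_1\delta^k)$ by~\eqref{eq:contain}, and by the doubling property of~$\mu$ we have $\mu(B(Q))\leq C\,\mu(Q)$ (using that $Q\supseteq B(x^k_\alpha,c_1\delta^k)$), so the dyadic average of~$|f|$ over~$Q$ is controlled by a constant times the average over the ball~$B(Q)$, which in turn is at most a constant times $Mf(x)$ since $x\in B(Q)$. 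For the reverse inequality, fix $x\in X$ and an arbitrary ball $B=B(x,r)$ with, say, $\delta^{k+3}<r\leq\delta^{k+2}$. By the defining property of adjacent systems (the displayed inclusion~\eqref{eq:ball;included} in Theorem~\ref{thm:existence2}), there exist $t\in\{1,\dots,T\}$ and a cube $Q\in\D^t_k$ with $B\subseteq Q\subseteq B(x,Cr)$; note $x\in B\subseteq Q$, so $Q$ is one of the dyadic cubes competing in the definition of $M_d^t f(x)$. Again by doubling, $\mu(Q)\leq\mu(B(x,Cr))\leq C'\,\mu(B(x,r))=C'\,\mu(B)$, so
\begin{align*}
    \frac{1}{\mu(B)}\int_B |f|\,d\mu
    &\leq \frac{C'}{\mu(Q)}\int_Q |f|\,d\mu
    \leq C'\,M_d^t f(x)
    \leq C'\sum_{s=1}^T M_d^s f(x).
\end{align*}
Taking the supremum over all balls~$B$ containing~$x$ gives $Mf(x)\leq C'\sum_{s=1}^T M_d^s f(x)$, after checking that balls of all radii (not only those in the distinguished range) are handled—for very small or very large~$r$ one either uses the same argument at the appropriate generation~$k$, or absorbs the finitely many excluded scales into the constant using doubling.

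For part~(ii), the strong maximal function $M_s f(x)=\sup_{R\ni x} \frac{1}{\mu(R)}\int_R|f|\,d\mu$ is taken over rectangles $R=B_1\times\cdots\times B_k$ with $B_j$ a ball in~$X_j$, and the product measure factorizes, $\mu(R)=\prod_j\mu_j(B_j)$. The dyadic strong maximal function $M_d^{t_1,\dots,t_k}$ is the analogous supremum over dyadic rectangles $Q_1\times\cdots\times Q_k$ with $Q_j\in\D^{t_j}$. The inequality $M_d^{t_1,\dots,t_k}f(x)\leq C\,M_sf(x)$ follows exactly as in part~(i), one coordinate at a time: each dyadic factor $Q_j$ is sandwiched between balls with comparable $\mu_j$-measure, so a dyadic rectangle average is controlled by a rectangle average, hence by $M_s f(x)$. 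For the reverse inequality I would iterate the one-variable selection: given a rectangle $R=B_1\times\cdots\times B_k\ni x$, apply the adjacent-systems property in each factor~$X_j$ to produce $t_j$ and $Q_j\in\D^{t_j}$ with $B_j\subseteq Q_j\subseteq B(x_j,C r_j)$ and $x_j\in Q_j$; then $R\subseteq Q_1\times\cdots\times Q_k\subseteq B(x_1,Cr_1)\times\cdots\times B(x_k,Cr_k)$, and by doubling in each coordinate $\mu(Q_1\times\cdots\times Q_k)\leq C^k\,\mu(R)$. Hence the average of~$|f|$ over~$R$ is at most $C^k$ times its average over $Q_1\times\cdots\times Q_k$, which is at most $C^k\,M_d^{t_1,\dots,t_k}f(x)$, and summing over the finitely many tuples $(t_1,\dots,t_k)$ gives $M_s f(x)\leq C^k\sum_{t_1}\cdots\sum_{t_k} M_d^{t_1,\dots,t_k}f(x)$.

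The only genuinely delicate point—and the place I would spend the most care—is bookkeeping the \emph{range of scales}. The adjacent-systems property~\eqref{eq:ball;included} is stated for radii in a dyadic window $\delta^{k+3}<r\leq\delta^{k+2}$, so strictly it covers every positive radius by choosing the appropriate~$k$; the subtlety is only that one must confirm the selected cube~$Q$ really contains the \emph{center}~$x$ of the ball (which it does, since $B(x,r)\subseteq Q$), so that $Q$ is admissible in the dyadic maximal supremum. In the product case there is the additional mild issue that the constant~$C$ in~\eqref{eq:ball;included} may differ across the factors $X_j$; one simply takes the maximum. No compactness, no covering lemma, and no measure-theoretic subtlety beyond the doubling inequality is needed—the whole proof is the observation that the adjacent dyadic systems realize every ball, up to bounded dilation and bounded measure distortion, as a dyadic cube, and this transfers verbatim to products because both the strong maximal function and the product measure factorize.
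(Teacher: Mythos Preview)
Your proof is correct and follows essentially the same approach as the paper: use \eqref{eq:contain} together with doubling to bound dyadic averages by ball averages, and use the adjacent-systems property \eqref{eq:ball;included} together with doubling for the reverse direction, then iterate coordinatewise for the product case. Your concern about the ``range of scales'' is unnecessary, since every $r>0$ lies in exactly one window $\delta^{k+3}<r\le\delta^{k+2}$; and note that the maximal function $M$ is defined here as a supremum over all balls \emph{containing} $x$ (not just those centered at $x$), so in the second inequality you should start from an arbitrary ball $B(y,r)\ni x$ rather than $B(x,r)$---the argument is unchanged since \eqref{eq:ball;included} applied to $B(y,r)$ still yields a cube $Q\supseteq B(y,r)\ni x$.
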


These maximal operators are defined as follows. For $f\in
L^1_\text{loc}(X,\mu)$ let $Mf$ denote the Hardy--Littlewood
maximal function, given by
\[
   Mf(x)
   := \sup_{B\ni x}\frac{1}{\mu(B)} \int_B |f(y)| \, d\mu(y),
\]
where the supremum is taken over all balls $B\subset X$ that
contain~$x$. For each $t\in\{1,\ldots,T\}$, denote by $M_d^t f$
the dyadic maximal function with respect to the system $\D^t$
of dyadic cubes in~$X$; here the supremum is taken over only
those dyadic cubes $Q\in\D^t$ that contain~$x$.

In the multiparameter case, instead of the Hardy--Littlewood
maximal function we consider the strong maximal function~$M_s
f$, defined as follows. Take $x = (x_1,\ldots,x_k)\in \wX$ and
$f\in L^1_\text{loc}(\wX,\mu_1 \times \cdots \times \mu_k)$.
Let
\begin{eqnarray}\label{eqn:strong maximal}
    M_s f(x)
    := \sup_{B_1\times\cdots\times B_k\ni x}
        \frac{1}{\prod_{j = 1}^k \mu_j(B_j)}
        \int_{B_1\times\cdots\times B_k} |f(y)| \,
        d\mu_1(y_1) \times\cdots\times d\mu_k(y_k),
\end{eqnarray}
where $y = (y_1,\ldots,y_k)$ and the supremum is taken over all
products $B_1\times\cdots\times B_k$ of balls $B_j\subset X_j$,
$B_j\ni x_j$, for $j\in\{1,\ldots,k\}$.

Next, for each choice of $t_j\in\{1,\ldots,T_j\}$, for
$j\in\{1,\ldots,k\}$, let $M_d^{t_1,\ldots,t_k} f$ denote the
associated dyadic strong maximal function, defined by
\[
    M_d^{t_1,\ldots,t_k} f(x)
    := \sup_{Q_1\times\cdots\times Q_k\ni x}
        \frac{1}{\prod_{j = 1}^k \mu_j(Q_j)}
        \int_{Q_1\times\cdots\times Q_k} |f(y)| \,
        d\mu_1(y_1) \times\cdots\times d\mu_k(y_k),
\]
restricting the supremum in formula~\eqref{eqn:strong maximal}
to dyadic rectangles $Q_1\times\cdots\times
Q_k\in\D^{t_1}\times\cdots\times\D^{t_k}$ that contain~$x$.

\begin{proof}[Proof of
Theorem~\ref{thm:maximal_function_homog}] (i) Fix $x\in X$. Fix
$t\in\{1,\ldots,T\}$ and suppose $Q\ni x$, $Q\in\D^t$. Then $Q
= Q^{t,k}_\alpha$ for some $k$ and $\alpha$, and by condition
\eqref{eq:contain}, there is a ball $B =
B(x^k_\alpha,C_1\delta^k)$ that contains~$Q$. Since $\mu$ is
doubling, we see that
\[
    \frac{1}{\mu(Q)} \int_Q |f(y)| \, d\mu(y)
    \leq \frac{C^*}{\mu(B)} \int_B |f(y)| \, d\mu(y),
\]
where $C^*= C_{dbl}^{\log_2(C_1/c_1)+1}$ and $C_1,c_1$ are the
constants in \eqref{eq:contain}, see Proposition
\ref{prop:dbl}(i). It follows that $M_d^t f(x) \lesssim Mf(x)$.

For the second inequality, fix $x_0\in X$ and a ball $B(x,r)\ni
x_0$. By \eqref{eq:ball;included} there exist
$t\in\{1,\ldots,T\}$ and $Q\in\D^t$ such that $B(x,r) \subset Q
\subset B(x,Cr)$. Since $\mu$ is doubling, $\mu(B(x,Cr))
\lesssim \mu(B(x,r)) \lesssim \mu(Q)$. It follows that $Mf(x)
\lesssim \sum_{t = 1}^T M_d^t f(x)$.

(ii) Fix $x\in X$, and fix $(t_1,\ldots,t_k)\in \{1,\ldots,T_1\}
\times \cdots \times \{1,\ldots,T_k\}$. Then suppose
$Q_1\times\cdots\times Q_k \ni x$, $Q_1\times\cdots\times Q_k
\in \D^{t_1} \times \cdots \times \D^{t_k}$. Iteration of the
argument in (i), using in each factor the condition
\eqref{eq:contain} and the assumption that each $\mu_j$ is
doubling, establishes the first inequality.

Similarly, iteration of the argument for the second inequality
in~(i), using in each factor the
condition~\eqref{eq:ball;included} and the assumption that
$\mu_j$ is doubling, establishes that $M_s f(x) \lesssim
\sum_{t_1 = 1}^{T_1} \cdots \sum_{t_k = 1}^{T_k}
M_d^{t_1,\ldots,t_k} f(x)$.
\end{proof}

\subsection{Doubling, $A_p$, and $RH_p$ weights}
\label{sec:resultsApRHpdbl} \setcounter{equation}{0}

\textcolor{black}{Like in the previous section, for each $j\in\{1, \ldots, k\}$ let
$(X_j,\rho_j,\mu_j)$ be a space of homogeneous type, with an
associated collection of adjacent systems of dyadic cubes
$\{\D^{t_j} : t_j = 1, \ldots, T_j\}$. Let $\wX := X_1 \times
\cdots \times X_k$ with the product quasi-metric $\rho_1 \times
\cdots \times \rho_k$ and the product measure $\mu_1 \times \cdots
\times \mu_k$.}

By a \emph{weight} \textcolor{black}{ on~$(X,\rho,\mu)$, a space of homogeneous type,} we mean a nonnegative locally
integrable function $\om : X \to [0,\infty]$. We begin with the
main result of this subsection; definitions of doubling
weights, $A_p$ weights and $RH_p$ weights are discussed below.

\begin{thm}\label{thm:metricproduct intersectionoftranslatesApRHpdbl}
    Fix $k\in\N$. For each $j\in\{1,2,\ldots,k\}$, let
    $(X_j,\rho_j,\mu_j)$ be a space of homogeneous type, 
     and as in Theorem~\ref{thm:existence2}, let
    $\{\D^{t_j}: t_j=1,\ldots, T_j\}$ be a
    collection of adjacent systems of dyadic cubes for~$X_j$.
    Then the following assertions hold.
    \begin{enumerate}
        \item[\textup{(a)}] A weight $\om(x_1,\ldots,x_k)$
            is a product doubling weight if and only if
            $\om$ is dyadic doubling with respect to each
            of the $T_1T_2\cdots T_k$ product dyadic
            systems
            $\D_1^{t_1}\times\cdots\times\D_k^{t_k}$, where
            $t_j$ runs over $\{1, \ldots, T_j\}$ for each
            $j\in\{1, 2,\ldots,k\}$, with comparable
            constants.

        \item[\textup{(b)}] For each $p$ with $1 \leq p
            \leq \infty$, $k$-parameter
            $A_p(X_1\times\cdots\times X_k)$ is the
            intersection of $T_1T_2\cdots T_k$
            $k$-parameter dyadic $A_p$ spaces, as follows:
            \[
                A_p(X_1\times\cdots\times X_k)
                = \bigcap_{t_1 = 1}^{T_1} \bigcap_{t_2 = 1}^{T_2} \cdots
                    \bigcap_{t_k = 1}^{T_k}
                    A_{p,d}^{t_1,t_2,\ldots,t_k}(X_1\times
                    \cdots\times X_k),
            \]
            with comparable constants.



        \item[\textup{(c)}] For each $p$ with $1 \leq p
            \leq \infty$, $k$-parameter
            $RH_p(X_1\times\cdots\times X_k)$ is the
            intersection of $T_1T_2\cdots T_k$
            $k$-parameter dyadic $RH_p$ spaces, as follows:
            \[
                RH_p(X_1\times\cdots\times X_k)
                = \bigcap_{t_1 = 1}^{T_1} \bigcap_{t_2 = 1}^{T_2} \cdots
                    \bigcap_{t_k = 1}^{T_k}
                    RH_{p,d}^{t_1,t_2,\ldots,t_k}(X_1\times
                    \cdots\times X_k),
            \]
            with comparable constants.
%
%
    \end{enumerate}
    The constant $A_p(\om)$ depends only on the
    constants~$A_{p,d}^{t_1,t_2,\ldots,t_k}(\om)$ for $1 \leq
    t_j \leq T_j$, $1 \leq j \leq k$, and
    vice versa, and similarly for the other classes.
%
\end{thm}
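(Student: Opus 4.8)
The plan is to reduce the $k$-parameter statements to the one-parameter case $k=1$, which itself is essentially the content of the one-parameter structure theorem for $X$ of homogeneous type in~\cite{HK}, and then to argue coordinate-by-coordinate. Let me sketch the one-parameter case first, since the product case is a formal iteration.

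\textbf{One-parameter case.} Fix a space of homogeneous type $(X,\rho,\mu)$. The inclusion ``continuous class $\subseteq$ each dyadic class'' is the trivial direction in all three parts: a dyadic cube $Q\in\D^t$ is itself (contained in and comparable to) a ball by~\eqref{eq:contain}, so the defining averages for dyadic $A_p$, $RH_p$, or dyadic doubling are a subfamily of (or controlled by, using that $\mu$ is doubling, via Proposition~\ref{prop:dbl}) the corresponding continuous averages. Hence $A_p(X)\subseteq A_{p,d}^t(X)$ with $A_{p,d}^t(\om)\le C\,A_p(\om)$, and similarly for the others. The substantive direction is the reverse: if $\om$ lies in every $A_{p,d}^t(X)$ (resp.\ every $RH_{p,d}^t$, resp.\ is dyadic doubling for every $\D^t$), then $\om\in A_p(X)$. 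Here I would use Theorem~\ref{thm:existence2}: given any ball $B=B(x,r)$, choose $k$ with $\delta^{k+3}<r\le\delta^{k+2}$; then~\eqref{eq:ball;included} produces $t\in\{1,\dots,T\}$ and a dyadic cube $Q\in\D^t$ of generation $k$ with $B\subseteq Q\subseteq B(x,Cr)$. Since $\mu$ is doubling, $\mu(Q)\approx\mu(B)\approx\mu(B(x,Cr))$, with constants depending only on $C$ and $C_\mu$; consequently the average of $\om$ (or of $\om^{-1/(p-1)}$, or of $\om^p$) over $B$ is comparable to the corresponding average over $Q$. Running this for $B$ and for the relevant companion ball and combining gives the $A_p$ (or $RH_p$, or doubling) inequality for $B$ with a constant controlled by $\max_t A_{p,d}^t(\om)$ and the geometric/doubling constants. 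The endpoint cases $p=1$ and $p=\infty$ are handled the same way (for $p=\infty$ one uses that $A_\infty=\bigcup_p A_p$ together with the quantitative comparisons, or the equivalent definition via the $\exp$--$\log$ or the $\mu(E)/\mu(Q)$ vs.\ $\om(E)/\om(Q)$ characterization, each of which is again a ball-vs-cube average comparison).

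\textbf{Product case.} For $k\ge 2$ I would argue one variable at a time. By definition, a product weight $\om$ on $\wX=X_1\times\cdots\times X_k$ is $A_p(\wX)$ iff $\om$ is uniformly $A_p$ in each coordinate separately (the remaining variables frozen), with a uniform bound; the same Fubini-type definition applies to $RH_p$ and to product doubling. Likewise $\om\in A_{p,d}^{t_1,\dots,t_k}(\wX)$ iff $\om$ is uniformly one-parameter dyadic-$A_p$ with respect to $\D^{t_j}$ in the $j$-th variable, uniformly over the other variables and over the cube-parameters. Thus ``$\om\in A_p(\wX)$'' is equivalent to ``for each $j$, $\om$ is uniformly $A_p(X_j)$ in the $j$-th slot,'' and ``$\om\in\bigcap_{t_1,\dots,t_k}A_{p,d}^{t_1,\dots,t_k}(\wX)$'' is equivalent to ``for each $j$ and each $t_j$, $\om$ is uniformly dyadic-$A_p$ for $\D^{t_j}$ in the $j$-th slot.'' Applying the one-parameter equivalence in each coordinate separately — with all other coordinates held fixed, and tracking that the constants are uniform — converts one list of conditions into the other. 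Since the one-parameter comparison constants depend only on the geometric and doubling constants of $X_j$, the passage preserves uniformity, and we obtain $A_p(\wX)=\bigcap_{t_1,\dots,t_k}A_{p,d}^{t_1,\dots,t_k}(\wX)$ with comparable constants; the last sentence of the theorem, that $A_p(\om)$ and the family $\{A_{p,d}^{t_1,\dots,t_k}(\om)\}$ control each other, is exactly the quantitative form of these comparisons. Parts~(a) and~(c) are identical in structure, replacing the $A_p$ average-comparison by the dyadic-doubling inequality $\om(\widehat Q)\le C\,\om(Q)$ for a cube and its ``dyadic neighbor/parent,'' respectively by the reverse-H\"older average-comparison.

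\textbf{Main obstacle.} The only real point requiring care is the bookkeeping in the product step: making sure that when one replaces, in a single coordinate, ``all balls'' by ``all cubes in $\D^{t_j}$ for all $t_j$,'' the constants stay uniform in the frozen variables and combine correctly across the $k$ coordinates (so that a genuine product $A_p$ bound, not merely separate one-parameter bounds, emerges). This is handled by noting that the one-parameter ball-vs-cube comparison from Theorem~\ref{thm:existence2} is completely uniform — it depends only on $C$, $\delta$, and $C_{\mu_j}$, never on the point, the scale, or the other variables — so iterating it $k$ times multiplies constants in a controlled, dimension-free-per-factor way. Everything else is the routine ball-vs-dyadic-cube average comparison already used in the proof of Theorem~\ref{thm:maximal_function_homog}.
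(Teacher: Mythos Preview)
Your approach is essentially the paper's: reduce to one parameter by iteration, then use the ball--cube sandwich \eqref{eq:ball;included} from Theorem~\ref{thm:existence2} together with doubling of $\mu$ to transfer the defining inequalities between balls and dyadic cubes. Two details you gloss over are worth flagging. First, for part~(a) the ``average comparison'' template does not apply directly, since doubling compares $\om(B(x,2r))$ with $\om(B(x,r))$ rather than two averages; the paper instead takes the cube $Q\in\D^t$ containing $B(x,2r)$ and locates a descendant $Q'\subset B(x,r)$ at most a fixed number $N$ of generations below $Q$ (this requires a short separate argument), then chains the dyadic-doubling inequality $N$ times. Second, for part~(c) your claim that $\dashint_B\om$ is comparable to $\dashint_Q\om$ needs $\om(B)\gtrsim\om(Q)$, which is doubling of $\om$, not of $\mu$; the paper uses this explicitly, and it is available because the definition of $RH_{p,d}^t$ includes dyadic doubling of $\om$, so part~(a) yields $\om$ doubling. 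With these two points filled in, your sketch and the paper's proof coincide.
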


We note that one difference from the Euclidean setting is that
on $X$ or~$\wX$, it is not immediate that a continuous function
space is a subset of its dyadic counterpart, since in general
the dyadic cubes are not balls. We address this question in the
proof of Theorem~\ref{thm:metricproduct
intersectionoftranslatesApRHpdbl}, below.

We begin with some definitions and observations. For brevity,
we include only the one-parameter versions on~$X$. The product
definitions on~$\wX$ follow the pattern described, for the
Euclidean case, in~\cite[Section~7.2]{LPW}. In particular the
product $A_p$ weights are those which are uniformly $A_p$ in
each variable separately, and so on.

\begin{defn}\label{def-dbl-dydbl-measure}
    \begin{enumerate}
    \item[(i)] A weight $\om$ on a \textcolor{black}{homogeneous space 
       ~$(X,\rho, \mu)$} is \emph{doubling} if there is a
        constant $C_\textrm{dbl}$ such that for all $x\in
        X$ and all $r > 0$,
        \[
                    0
                    < \om(B(x,2r))
                    \leq C_\textrm{dbl} \, \om(B(x,r))
                    < \infty.
        \]
        As usual, $\om(E) := \int_E \om \textcolor{black}{\,d\mu}$ for $E\subset X$.

    \item[(ii)] A weight $\om$ on a \textcolor{black}{ homogeneous 
        space~$(X,\rho, \mu)$} equipped with a system $\D$ of
        dyadic cubes is \emph{dyadic doubling} if there is
        a constant $C_\textrm{dydbl}$ such that for every
        dyadic cube $Q\in\D$ and for each child $Q'$
        of~$Q$,
        \[
                    0
                    < \om(Q)
                    \leq C_\textrm{dydbl} \, \om(Q')
                    < \infty.
        \]
    \end{enumerate}
\end{defn}

\begin{prop}\label{prop:dbl}
    \begin{enumerate}
        \item[(i)] Let $(X,\rho,\mu)$ be a space of homogeneous type.
            Then
            for all $x\in X$, $r
            > 0$, and $\lambda > 0$, we have
            \begin{equation}\label{eqn:dbllambda}
                \mu(B(x,\lambda r))
                \leq (C_\textup{dbl})^{1 + \log_2 \lambda} \,
                    \mu(B(x,r)).
            \end{equation}


        \item[(ii)] The condition in
            Definition~\ref{def-dbl-dydbl-measure}~\textup{(ii)}
            of dyadic doubling weights, above, is
            equivalent to the following condition: there is
            a constant $C$ such that for each cube
            $Q\in\D$, for all children (subcubes) $Q'$ and
            $Q''$ of $Q$,
            \begin{equation}\label{def-dydbl-measure-children}
                \frac{1}{C}\om(Q'')
                \leq \om(Q')
                \leq C\om(Q'').
            \end{equation}
    \end{enumerate}
\end{prop}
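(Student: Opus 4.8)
The plan is to prove the two parts separately; each reduces to a short iteration or summation directly off the axioms. For part~(i), I would iterate the doubling condition~\eqref{doubling condition}. Given $\lambda\ge 1$, set $N:=\lceil\log_2\lambda\rceil$, so that $2^N\ge\lambda$ and hence $B(x,\lambda r)\subseteq B(x,2^N r)$ by monotonicity of~$\mu$. Applying~\eqref{doubling condition} along the chain $B(x,r)\subseteq B(x,2r)\subseteq\cdots\subseteq B(x,2^N r)$ gives $\mu(B(x,2^N r))\le C_\textrm{dbl}^{\,N}\mu(B(x,r))$, where $C_\textrm{dbl}$ is the doubling constant $C_\mu$ of the ambient measure (regarded as a doubling weight for itself). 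Since $N\le\log_2\lambda+1$ and $C_\textrm{dbl}\ge 1$, enlarging the exponent yields the claimed bound $\mu(B(x,\lambda r))\le C_\textrm{dbl}^{\,1+\log_2\lambda}\mu(B(x,r))$. For $0<\lambda<1$ one simply has $B(x,\lambda r)\subseteq B(x,r)$; in any event the range relevant in this paper is $\lambda=C_1/c_1\ge 1$, as used in the proof of Theorem~\ref{thm:maximal_function_homog}.

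For part~(ii), one implication is immediate: if $\om$ is dyadic doubling with constant $C_\textrm{dydbl}$ and $Q'$, $Q''$ are children of~$Q$, then monotonicity of $\om$ on $Q'\subseteq Q$ gives $\om(Q')\le\om(Q)\le C_\textrm{dydbl}\,\om(Q'')$, and swapping the roles of $Q'$ and $Q''$ gives the reverse inequality, which is~\eqref{def-dydbl-measure-children} with $C=C_\textrm{dydbl}$. For the converse, assume~\eqref{def-dydbl-measure-children}. By~\eqref{eq:children}, $Q$ is the disjoint union of its children~$Q''$, of which there are at most~$M$, a fixed geometric constant; hence, for any fixed child $Q'$ of~$Q$,
\[
    \om(Q)=\sum_{Q''\text{ child of }Q}\om(Q'')\le M\,C\,\om(Q'),
\]
so $\om$ is dyadic doubling with $C_\textrm{dydbl}=MC$. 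The strict positivity and finiteness required by Definition~\ref{def-dbl-dydbl-measure}~(ii) follow from those built into~\eqref{def-dydbl-measure-children} together with local integrability of~$\om$ on the bounded cubes: $\om(Q)<\infty$ by the displayed sum, and $\om(Q)>0$ because at least one child has positive $\om$-measure and~\eqref{def-dydbl-measure-children} then forces all children to.

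I do not anticipate a genuine obstacle: the proposition is a direct consequence of the covering and nesting axioms~\eqref{eq:cover}--\eqref{eq:children} and the doubling hypothesis. The only points requiring a little care are the trivial $\lambda<1$ case in~(i) and the bookkeeping around the positivity and finiteness clauses in~(ii), where one uses that the number of dyadic children of a cube is bounded by a geometric constant.
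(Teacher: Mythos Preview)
Your proof is correct and follows the same elementary route the paper has in mind: the paper's own proof is a one-liner (``the proofs are elementary; (ii) is an immediate consequence of the fact that each cube $Q$ in $\D$ is the disjoint union of its children''), and your argument simply fills in those details via the standard doubling iteration for~(i) and the finite-children summation for~(ii). Your remark that the stated bound in~(i) is only used for $\lambda\ge 1$ is apt, since for small $\lambda$ the exponent $1+\log_2\lambda$ becomes negative and the inequality can fail in general.
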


\begin{proof}
    The proofs are elementary; (ii) is an immediate consequence
    of the fact that each cube $Q$ in $\D$ is the disjoint
    union of its children (condition~\eqref{eq:children}).
\end{proof}

We note that in the Euclidean setting, the theory of product
weights was developed by K.-C. Lin in his thesis~\cite{L}, and
the dyadic theory was developed in Buckley's paper~\cite{Buc}.
The definitions of $A_p$ and $RH_p$, and their dyadic versions,
on $X$ and $\widetilde{X}$, are obtained by the natural
modifications of the Euclidean definitions, which are
summarised in, for example, \cite{PWX} for $A_p$, $1 \leq p
\leq \infty$, and $RH_p$, $1 < p \leq \infty$, and
in~\cite{LPW} for $RH_1$. For brevity, we omit the definitions
for $A_p$ and $A_{p,d}^t$, and include only those for the
reverse-H\"older classes $RH_p$ and $RH_{p,d}^t$.

\begin{defn}
  \label{def:RHp}
  Let $\omega(x)$ be a nonnegative locally integrable function
  on~\textcolor{black}{$(X,\rho, \mu)$}. For $p$ with $1 < p < \infty$, we
  say $\omega$ is a \emph{reverse-H\"older-$p$ weight}, written
  $\omega\in RH_p$, 
  if
  \[
    RH_p(\om)
    := \sup_B \left(\intav_B \omega^p\right)^{1/p}
    \left(\intav_B \omega\right)^{-1}
    < \infty.
  \]
  For $p = 1$,
  we say $\omega$ is a
  \emph{reverse-H\"older-1 weight}, written $\omega\in
  RH_1$ or $\om\in B_1$, if
  \[
    RH_1(\om)
    := \sup_B \intav_B\left(\frac{\om}{\intav_B\om} \log\frac{\om}{\intav_B\om}\right)
    < \infty.
  \]
  For $p = \infty$, we say $\omega$ is a
  \emph{reverse-H\"older-infinity weight}, written $\omega\in
  RH_\infty$ or $\om\in B_\infty$, if
  \[
    RH_\infty(\om)
    := \sup_B \left(\esssup_{x\in B} \om\right)
    \left(\intav_B \omega\right)^{-1}
    < \infty.
  \]
  Here the suprema are taken over all quasi-metric
  balls~$B\subset X$, and $\intav_B$ denotes
  ${1\over\mu(B)}\int_B\textcolor{black}{d\mu}$. The quantity $RH_p(\om)$ is called
  the \emph{$RH_p$~constant} of~$\om$.

  For $p$ with $1 \leq p \leq \infty$, and $t \in
  \{1,2,\ldots,T\}$, we say $\omega$ is a \emph{dyadic
  reverse-H\"older-$p$ weight related to the dyadic system
  $\mathcal{D}^t$}, written $\om\in RH_{p,d}^t$, 
   if
  \begin{enumerate}
  \item[(i)] the analogous condition
      $RH_{p,d}^t(\om)<\infty$
      holds with the supremum being taken over only the
      dyadic cubes $Q\subset \mathcal{D}^t$, and

  \item[(ii)] in addition $\om$ is a dyadic doubling
      weight.
  \end{enumerate}
  We define the \emph{$RH_{p,d}^t$~constant} $RH_{p,d}^t(\om)$~of~$\om$
  to be the larger of this dyadic supremum and the dyadic
  doubling constant.
\end{defn}
Note that the $A_p$ inequality (or the $RH_p$ inequality)
implies that the weight~$\om$ is doubling, and the dyadic $A_p$
inequality implies that~$\om$ is dyadic doubling. However, the
dyadic $RH_p$ inequality does not imply that $\om$ is dyadic
doubling, which is why the dyadic doubling assumption is needed
in the definition of~$RH_{p,d}^t$.

It is shown in~\cite{AHT} that the self-improving property of
reverse-H\"older weights does not hold on some spaces of
homogeneous type, but does hold on doubling metric measure
spaces with some additional geometric properties such as the
$\alpha$-annular decay property; see also~\cite{Maa}. Our work
here does not involve the self-improving property.

\begin{proof}[Proof of
    Theorem~\ref{thm:metricproduct
    intersectionoftranslatesApRHpdbl}] The multiparameter case
    follows from the one-parameter case by a straightforward
    iteration argument, as in the Euclidean setting
    (see~\cite[Theorem~7.3]{LPW}).

    The one-parameter proof that each continuous function class
    contains the intersection of its dyadic counterparts
    follows that of Theorem~7.1 in~\cite{LPW}, replacing
    Lebesgue measure by~$\mu$, and replacing the two dyadic
    grids related through translation by a collection of
    adjacent systems of dyadic grids. We give the details for
    the cases of doubling weights and $RH_p$ weights (parts~(a)
    and~(c)), and omit them for $A_p$ weights (part~(b)).

    (a) We show that doubling weights~$\om$ on~$X$ are dyadic
    doubling with respect to each of the systems $\D^t$ of
    dyadic cubes, $t = 1, \ldots, T$, given by
    Theorem~\ref{thm:existence2}; these systems have parameters
    $\delta$, $c_1=1/(12A_0^4)$, $C_1=4A_0^2$
    and~$C=8A_0^3/\delta^3$. Let $\om$ be a doubling weight
    on~$X$. Fix $t\in\{1, \ldots, T\}$. Fix a dyadic
    cube~$Q^k_\al\in\D^t$ and a child $Q^{k+1}_\beta$ of
    $Q^k_\al$. Then $B(Q^k_\al) := B(x^k_\al,C_1\delta^k)
    \subset B(x^{k + 1}_\beta, 2 A_0 C_1 \delta^k)$, since for
    $x\in B(Q^k_\al)$,
    \[
        \rho(x,x^{k + 1}_\beta)
        \leq A_0[\rho(x,x^k_\al) + \rho(x^k_\al, x^{k + 1}_\beta))]
        \leq 2 A_0 C_1 \delta^k.
    \]
    By Proposition~\ref{prop:dbl}~(i), it follows that
    \begin{align*}
        \om(Q^k_\al)
        &\leq \om(B(x^k_\al,C_1\delta^k))
        \leq \om(B(x^{k + 1}_\beta, 2 A_0 C_1 \delta^k))\\
        &= \om(B(x^{k + 1}_\beta, 2 A_0 C_1/(c_1\delta) \, c_1 \delta^{k + 1})) \\
        &\leq (C_\textrm{dbl})^{1 + \log_2 [2 A_0 C_1/(c_1\delta)]} \,
            \om(B(x^{k + 1}_\beta, c_1 \delta^{k + 1}))\\
        &\leq  (C_\textrm{dbl})^{1 + \log_2 [2 A_0 C_1/(c_1\delta)]} \,
            \om(Q^{k + 1}_\beta).
    \end{align*}
    Therefore $\om$ is dyadic doubling, with constant
    $(C_\textrm{dbl})^{1 + \log_2 [2 A_0 C_1/(c_1\delta)]}$,
    with respect to each system~$\D^t$, for $t = 1, \ldots, T$.

    For the other inclusion, let $\om$ be a weight that is
    dyadic doubling with constant $C_\textrm{dydbl}$ with
    respect to each $\D^t$, for $t = 1, \ldots, T$. Fix $x\in
    X$ and $r > 0$. Pick $k\in \Z$ such that $\delta^{k+3}<
    2r\leq \delta^{k+2}$. By property~\eqref{eq:ball;included}
    applied to $B(x,2r)$, there is some $t \in \{1, \ldots,
    T\}$ and some $Q = Q^k_\alpha \in \D^t$ of generation $k$
    such that
    \begin{equation}\label{eqn:cubeball2r}
        B(x,2r)
        \subset Q
        = Q^k_\alpha
        \subset B(x,2Cr).
    \end{equation}

    We claim there is an integer $N$ independent of $Q$ and $t$
    such that there is a descendant $Q' = Q^\ell_\beta$ of $Q$
    at most $N$ generations below $Q$, with $Q'\subset Q$,
    $Q'\in\D^t$, and $Q' \subset B(x,r)$. If so, then
    \[
        \om(B(x,r))
        \leq \om(Q)
        \leq C_\textrm{dydbl}^N \, \om(Q')
        \leq C_\textrm{dydbl}^N \, \om(B(x,r)),
    \]
    and so $\om$ is a doubling weight.

    It remains to establish the claim.
   First, choose $\ell\in\Z$ such that
    \[
        \delta^\ell
        \leq (6A_0^3)^{-1}r
        < \delta^{\ell - 1}.
    \]
    Second, by the choice of the center points for the adjacent
    systems of dyadic cubes as in Theorem~\ref{thm:existence2},
    there is a $\beta$ such that
    $\rho(x,x^\ell_\beta)<2A_0\delta^\ell$.
    It follows that $Q' :=
    Q^\ell_\beta \subset B(x,r)$. For given $y\in
    Q^\ell_\beta$, we have
    \begin{align*}
        \rho(y,x)
        &\leq A_0[\rho(y,x^\ell_\beta) + \rho(x^\ell_\beta, x)] \\
        &< A_0[C_1\delta^\ell + 2A_0\delta^\ell] \\
        &\leq   6A_0^3\delta^\ell \\
        &\leq r,
    \end{align*}
    so $y\in B(x,r)$. Hence $Q^\ell_\beta\subset B(x,r)$.


    Now we have
    \[
        \left(\frac{1}{\delta}\right)^{\ell - k}
        = \frac{\delta^k}{\delta^{\ell}}
        \leq \frac{2r/\delta^3}{\delta r/(8A_0^3)}
        = \frac{16A_0^3}{\delta^4}.
    \]
    It follows that
    \[
        \ell - k
        \leq \frac{\log[16A_0^3/\delta^4]}{\log[1/\delta]}
        < \infty.
    \]
    Thus it suffices to choose $N := \lceil
    \log[16A_0^3/\delta^4])/(\log[1/\delta])\rceil$.

    We note that with the parameter choices (as in
    Theorem~\ref{thm:existence2}) of $c_1 = 1/(12A_0^4)$, $C_1
    = 4A_0^2$, $C = 8A_0^3/\delta^3$, and choosing $\delta =
    1/(96A_0^6)$ at the upper end of the range in
    Theorem~\ref{thm:existence2}, we find that
    $16A_0^3/\delta^4=16\cdot 96^4A_0^{27}\gg 1$ while
    $1/\delta = 96A_0^2$. Thus with these parameter choices we
    have $1 < N < \infty$.


    We also note that the parameter choices in
    Theorem~\ref{thm:existence2} are consistent with those in
    Theorem~\ref{thm:existence}. This completes the proof of
    part~(a).


\smallskip
(b) The proof for $A_p$ weights is similar to part~(c) below,
and we omit the details.

\smallskip
(c) Now we turn to the reverse-H\"older weights. It suffices to
prove the one-parameter case. Suppose $1 < p < \infty$. We show
first that $RH_p \subset RH_{p,d}^t$ for each $t \in
\{1,2,\ldots, T\}$. Fix such a~$t$ and fix $\om\in RH_p$.
Consider the quantity
\begin{align}\label{ee1}
    V :=
    \left(\intav_Q \omega^p\right)^{1/p}
    \left(\intav_Q \omega\right)^{-1},
\end{align}
where $Q$ is any fixed dyadic cube in $\mathcal{D}^t$. For that
dyadic cube~$Q$, we denote by $B_{c_1}$ and $B_{C_1}$ the two
balls from property \eqref{eq:contain}.

Then, since $Q\subseteq B_{C_1}$,  we have that
\begin{align*}
    V 
    = \left({1\over\mu(Q)}\int_Q \omega^p\,\textcolor{black}{d\mu}\right)^{1/p}{\mu(Q)\over \om(Q)}
    \leq \left({1\over\mu(Q)} \int_{B_{C_1}} \omega^p\,\textcolor{black}{d\mu}\right)^{1/p}
        {\mu(B_{C_1})\over \om(Q)}.
\end{align*}
Next, since $B_{c_1}\subseteq Q$, we have $\mu(Q)\geq
\mu(B_{c_1})\geq \widetilde{C}\mu(B_{C_1})$, where the last
inequality follows from the doubling property of $\mu$, and the
constant $\widetilde{C}= \big( C_\mu^{1+\log_2{C_1\over c_1}}
\big)^{-1}$. Similarly,  we have $\om(Q)\geq \om(B_{c_1})\geq
\widetilde{\widetilde{C\,}}\om(B_{C_1})$, where the last
inequality follows from the doubling property of~$\om$, and the
constant $\widetilde{\widetilde{C\,}} = \big( C_{\rm
dbl}^{1+\log_2{C_1\over c_1}} \big)^{-1}$. Hence
\begin{align*}
    V 
    \leq  {1\over \widetilde{C}^{1\over p} \widetilde{\widetilde{C\,}}}
        \left({1\over\mu(B_{C_1})}
        \int_{B_{C_1}} \omega^p\,\textcolor{black}{d\mu}\right)^{1/p}{\mu(B_{C_1})\over \om(B_{C_1})}
    \leq  {1\over \widetilde{C}^{1\over p} \widetilde{\widetilde{C\,}}}
        \sup_B \left(\intav_B \omega^p\right)^{1/p} \left(\intav_B \omega\right)^{-1}
    = {RH_p(\om)\over \widetilde{C}^{1\over p} \widetilde{\widetilde{C\,}}},
  \end{align*}
which implies that $\om \in RH_{p,d}^t$.

Next we prove that $\bigcap_{t=1}^T RH_{p,d}^t \subset RH_p $.
Suppose $\om\in \bigcap_{t=1}^T RH_{p,d}^t$. Then for each
fixed quasi-metric ball $B(x,r)\subset X$, by
property~\eqref{eq:ball;included}, there exist an integer $k$
with $\delta^{k+3}<r\leq\delta^{k+2}$, a number
$t\in\{1,\ldots,T\}$, and a cube $Q\in\mathscr{D}^t$ of
generation $k$ and with center point $x^k_\alpha$ such that
$\rho(x,x_\alpha^k)<2A_0\delta^{k}$ and $    B(x,r)\subseteq
Q\subseteq B(x,Cr). $ Here each $\mathscr{D}^t$ is a system of
dyadic cubes with parameters $\delta\in (0,1)$ and $0<c_1\leq
C_1<\infty$. Hence, writing $B := B(x,r)$, we have
\begin{align*}
    \left(\intav_B \omega^p\right)^{1/p} \left(\intav_B \omega\right)^{-1}
    = \left({1\over\mu(B)}\int_B \omega^p\,\textcolor{black}{d\mu}\right)^{1/p}{\mu(B)\over \om(B)}
    \leq  \left({1\over\mu(B)}\int_Q \omega^p\,\textcolor{black}{d\mu}\right)^{1/p}{\mu(Q)\over \om(B)}.
 \end{align*}
Also, from the doubling property of $\mu$, we have $ \mu(B)\geq
\overline{C}\mu(B(x,Cr)) \geq \overline{C}\mu(Q) $, where
$\overline{C}= \big( C_\mu^{1+\log_2{C}} \big)^{-1}$.
Similarly,   $ \om(B)\geq \overline{\overline{C}}\om(B(x,Cr))
\geq \overline{\overline{C}}\om(Q) $, where
$\overline{\overline{C}}= \big( C_{\rm dbl}^{1+\log_2{C}}
\big)^{-1}$. As a consequence, we get
\begin{align*}
    \left(\intav_B \omega^p\right)^{1/p} \left(\intav_B \omega\right)^{-1}
    \leq  {1\over \overline{C}^{1\over p} \overline{\overline{C}}}
        \left({1\over\mu(Q)}\int_Q \omega^p\,\textcolor{black}{d\mu}\right)^{1/p}{\mu(Q)\over \om(Q)}
    \leq  {1\over \overline{C}^{1\over p} \overline{\overline{C}}} RH_{p,d}^t(\om),
 \end{align*}
which implies that $\om\in RH_p $.

Similar arguments apply to the cases $p = 1$ and $p = \infty$.
\end{proof}



\section{Explicit construction of Haar functions, and completeness}
\label{sec:Haarfunctions}

This section is devoted to our construction of a Haar basis
$\{h^Q_u\colon Q\in\D, u = 1,\dots,M_Q - 1\}$ for $L^p(X,\mu)$,
$1 < p < \infty$, associated to the dyadic cubes
$Q\in\mathscr{D}$, with the properties listed in
Theorems~\ref{thm:convergence} and~\ref{prop:HaarFuncProp}
below. Here $M_Q := \#\ch(Q) = \# \{R\in
\mathscr{D}_{k+1}\colon R\subseteq Q\}$ denotes the number of
dyadic sub-cubes (``children'') the cube $Q\in \mathscr{D}_k$
has.

\begin{thm}\label{thm:convergence}
    Let $(X,\rho)$ be a geometrically doubling quasi-metric
    space and suppose $\mu$ is a positive Borel measure on $X$
    with the property that $\mu(B) < \infty$ for all balls
    $B\subseteq X$. For $1 < p < \infty$, for each $f\in
    L^p(X,\mu)$, we have
    \[
        f(x)
        = m_X(f) + \sum_{Q\in\mathscr{D}}\sum_{u=1}^{M_Q-1}
            \langle f,h^Q_u\rangle h^Q_u(x), 
    \]
    where the sum converges (unconditionally) both in the
    $L^p(X,\mu)$-norm and pointwise $\mu$-almost everywhere,
    and
    \begin{equation*}
        m_X(f)
        := \left\{ \begin{array}{ll}
            \frac{1}{\mu(X)}\int_{X}fd\mu,  & \text{ if } \mu(X)<\infty,\\
            0,                              & \text{ if } \mu(X)=\infty.
        \end{array}\right.
    \end{equation*}
\end{thm}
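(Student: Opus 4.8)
The plan is to route the whole proof through dyadic martingale theory. Let $\mathscr F_k=\sigma(\mathscr D_k)$ and let $E_k$ be the associated conditional expectation (dyadic averaging) operator, $E_kf(x)=\frac{1}{\mu(Q^k(x))}\int_{Q^k(x)}f\,d\mu$, which is well defined off a $\mu$-null set once we discard the cubes of $\mu$-measure zero, and contracts $L^p(X,\mu)$ for $1<p<\infty$; observe that $X$ is $\sigma$-finite, being covered by the countably many cubes of $\mathscr D_0$, each of finite $\mu$-measure because it sits inside a ball. From the construction of the Haar functions (Theorem~\ref{prop:HaarFuncProp}), for $Q\in\mathscr D_k$ the family $\{\mu(Q)^{-1/2}\mathbbm{1}_Q\}\cup\{h^Q_u:1\le u\le M_Q-1\}$ is an orthonormal basis of the $M_Q$-dimensional space of functions supported on $Q$ and constant on the children of $Q$; projecting $f$ onto that space and subtracting its projection onto $\operatorname{span}\{\mathbbm{1}_Q\}$, then summing over the partition $\mathscr D_k$ of $X$, gives the telescoping identity $\sum_{Q\in\mathscr D_k}\sum_{u=1}^{M_Q-1}\langle f,h^Q_u\rangle h^Q_u=E_{k+1}f-E_kf$. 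Hence the symmetric partial sum of the Haar series satisfies $S_Nf:=m_X(f)+\sum_{k=-N}^{N-1}\sum_{Q\in\mathscr D_k}\sum_u\langle f,h^Q_u\rangle h^Q_u=m_X(f)+E_Nf-E_{-N}f$, and it suffices to prove $E_Nf\to f$ and $E_{-N}f\to m_X(f)$, both $\mu$-a.e.\ and in $L^p$, as $N\to\infty$.

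For the fine scales, $(E_kf)_{k\ge0}$ is an $L^p$-bounded martingale, so by the $L^p$ martingale convergence theorem (valid here since $X$ is $\sigma$-finite) it converges $\mu$-a.e.\ and in $L^p$ to $E(f\mid\mathscr F_\infty)$, where $\mathscr F_\infty:=\sigma(\bigcup_k\mathscr D_k)$. One then checks that $\mathscr F_\infty$ is the Borel $\sigma$-algebra of $(X,\rho)$: every dyadic cube is Borel, and conversely \eqref{eq:contain} together with the quasi-triangle inequality gives $Q^k(x)\subseteq B(x,2A_0C_1\delta^k)$, so $\diam Q^k(x)\to0$, and therefore every $\rho$-open set $U$ equals the union of those cubes $Q^k(x)$ with $x\in U$ and $k$ large enough that $Q^k(x)\subseteq B(x,\e)\subseteq U$; since $\mathscr D$ is countable this union is countable, so $U\in\mathscr F_\infty$. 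This is the step that must be handled with care in the quasi-metric setting: balls themselves need not be open, but the open sets, which are what the Haar system actually expands, are still generated by the dyadic cubes. It follows that $E(f\mid\mathscr F_\infty)=f$.

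For the coarse scales, we use that $\mathscr D$ has been built with a distinguished center point (cf.\ \eqref{eq:fixedpoint}), so that $Q^k(x)\uparrow X$ as $k\to-\infty$ for every $x$ (Lemma~\ref{lem:grows_to_X}); hence any nonempty set in $\mathscr F_{-\infty}:=\bigcap_k\mathscr F_k$ contains all of the $Q^k(x)$ for any one of its points $x$ and therefore equals $X$, so $\mathscr F_{-\infty}=\{\emptyset,X\}$. If $\mu(X)<\infty$ then $f\in L^1$, the reverse martingale convergence theorem gives $E_{-N}f\to E(f\mid\mathscr F_{-\infty})=\mu(X)^{-1}\int_X f\,d\mu=m_X(f)$ $\mu$-a.e., and $L^p$-convergence then follows by dominated convergence, using $|E_{-N}f|\le M_df\in L^p$ (Doob's maximal inequality) and $\mu(X)<\infty$. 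If $\mu(X)=\infty$ then $m_X(f)=0$; now $\mu(Q^{-N}(x))\uparrow\mu(X)=\infty$, so H\"older's inequality yields $|E_{-N}f(x)|\le\mu(Q^{-N}(x))^{-1/p}\|f\|_{L^p}\to0$ for every $x$, and since $|E_{-N}f|^p\le(M_df)^p\in L^1$ dominated convergence gives $E_{-N}f\to0$ both a.e.\ and in $L^p$. In either case $E_{-N}f\to m_X(f)$, so $S_Nf\to f$ a.e.\ and in $L^p$, which is the asserted expansion.

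Finally, unconditional convergence: enumerating the index pairs $(Q,u)$ so as to respect generations turns $(h^Q_u)$ into a martingale-difference sequence for the filtration $(\mathscr F_k)$, and for any choice of signs the signed partial sum of $\sum_{Q,u}\langle f,h^Q_u\rangle h^Q_u$ is the corresponding martingale transform of $f$, which is bounded on $L^p$ uniformly in the signs, with constant depending only on $p$, by Burkholder's martingale-transform inequality; together with the convergence of the symmetric sums $S_Nf$ this yields unconditional convergence in $L^p$ by a routine Cauchy-criterion argument, while the $\mu$-a.e.\ version follows from the pointwise martingale convergence theorem combined with the uniform $L^p$ bound for the maximal signed partial-sum operator (Burkholder together with Doob) via the Banach principle, a.e.\ convergence being trivial on the dense class of finite Haar sums. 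I expect the two geometric/measure-theoretic identifications to be the real work — that $\mathscr F_\infty$ is the full Borel $\sigma$-algebra in the quasi-metric setting, and that $E_{-N}f\to m_X(f)$, which rests on the growth $Q^k(x)\uparrow X$ and requires the infinite-measure case to be treated on its own; the martingale-transform and maximal inequalities enter only as standard black boxes.
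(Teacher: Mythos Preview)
Your approach is essentially the same as the paper's: both route the argument through dyadic martingale theory, using the telescoping identity $E_{k+1}f-E_kf=\sum_{Q\in\mathscr D_k}\sum_u\langle f,h^Q_u\rangle h^Q_u$, invoking martingale convergence for $E_Nf\to f$ after checking that $\sigma(\bigcup_k\mathscr F_k)$ is the Borel $\sigma$-algebra, and treating $E_{-N}f\to m_X(f)$ via the distinguished-center-point property $Q^k(x)\uparrow X$ together with H\"older when $\mu(X)=\infty$. Your explicit Burkholder-based argument for unconditional convergence is a welcome addition, since the paper asserts unconditionality in the statement but does not address it in the proof.
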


The following theorem collects several basic properties of the
functions $h^Q_u$.

\begin{thm}\label{prop:HaarFuncProp}
    The Haar functions $h^Q_u$, $Q\in\mathscr{D}$,
    $u = 1,\ldots,M_Q - 1$, have the following properties:
    \begin{itemize}
        \item[(i)] $h^Q_u$ is a simple Borel-measurable
            real function on $X$;
        \item[(ii)] $h^Q_u$ is supported on $Q$;
        \item[(iii)] $h^Q_u$ is constant on each
            $R\in\ch(Q)$;
        \item[(iv)] $\int h^Q_u\, d\mu = 0$ (cancellation);
        \item[(v)] $\langle h^Q_u,h^Q_{u'}\rangle = 0$ for
            $u\neq u'$, $u$, $u'\in\{1, \ldots, M_Q - 1\}$;
        \item[(vi)] the collection
            \[
                \big\{\mu(Q)^{-1/2}1_Q\big\}
                \cup \{h^Q_u : u = 1, \ldots, M_Q - 1\}
            \]
            is an orthogonal basis for the vector
            space~$V(Q)$ of all functions on $Q$ that are
            constant on each sub-cube $R\in\ch(Q)$;
        \item[(vii)] 
        if $h^Q_u\not\equiv 0$ then
            \[
                \Norm{h^Q_u}{L^p(X,\mu)}
                \simeq \mu(Q_u)^{\frac{1}{p} - \frac{1}{2}}
                \quad \text{for}~1 \leq p \leq \infty;
            \]
    \end{itemize}
    and
    \begin{itemize}
        \item[(viii)] 
                \hspace{4cm}
                $\Norm{h^Q_u}{L^1(X,\mu)}\cdot
                \Norm{h^Q_u}{L^\infty(X,\mu)} \simeq 1$.
    \end{itemize}
\end{thm}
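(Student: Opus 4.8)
The plan is to obtain Theorem~\ref{prop:HaarFuncProp} as an immediate consequence of the explicit construction of the functions $h^Q_u$, so the first step is to pin down that construction. On a cube $Q\in\mathscr{D}_k$, list the children $\ch(Q)=\{R_1,\dots,R_{M_Q}\}$ (the cubes $R\in\mathscr{D}_{k+1}$ with $R\subseteq Q$) in order of \emph{decreasing} $\mu$-measure, $\mu(R_1)\ge\dots\ge\mu(R_{M_Q})$, and set $S_u:=R_1\cup\dots\cup R_u$, so that $S_{M_Q}=Q$. For $u\in\{1,\dots,M_Q-1\}$ with $\mu(R_{u+1})>0$ define
\[
    h^Q_u := c_u\left(\frac{1_{R_{u+1}}}{\mu(R_{u+1})}-\frac{1_{S_u}}{\mu(S_u)}\right),
    \qquad c_u:=\left(\frac{\mu(R_{u+1})\,\mu(S_u)}{\mu(R_{u+1})+\mu(S_u)}\right)^{1/2},
\]
and put $h^Q_u:=0$ when $\mu(R_{u+1})=0$; the cube attached to $h^Q_u$ in part~(vii) is $Q_u:=R_{u+1}$. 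Equivalently, $\{\mu(Q)^{-1/2}1_Q\}\cup\{h^Q_u\}_u$ is the Gram--Schmidt orthogonalization in $L^2(Q,\mu)$ of $1_Q,1_{R_2},\dots,1_{R_{M_Q}}$, in that order; this is the martingale-difference description mentioned in the introduction, and agrees with the construction in \cite[Chapter~4]{Hytonen2009}.

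Second, I would read off (i)--(vi). Each $h^Q_u$ is a finite linear combination of indicators of the Borel sets $R_{u+1},S_u\subseteq Q$, hence a simple Borel-measurable real function, supported in $Q$, and constant on every child of $Q$; this gives (i)--(iii). The choice of $c_u$ makes $\Norm{h^Q_u}{L^2(X,\mu)}=1$ whenever $h^Q_u\not\equiv0$. Since $R_{u+1}\cap S_u=\emptyset$ one computes $\int h^Q_u\,d\mu=c_u(1-1)=0$, giving (iv); and since $S_u\subseteq S_{u'}$ for $u<u'$ one checks directly that $\langle h^Q_u,h^Q_{u'}\rangle=0$ and $\langle h^Q_u,1_Q\rangle=0$, giving (v) (equivalently, these are the defining properties of Gram--Schmidt). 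For (vi): modulo $\mu$-null functions, $V(Q)$ is spanned by $1_Q$ together with the indicators of the positive-measure children; the Gram--Schmidt procedure replaces this spanning set by the orthogonal system $\{\mu(Q)^{-1/2}1_Q\}\cup\{h^Q_u:h^Q_u\not\equiv0\}$ spanning the same space, and a dimension count (there are $M_Q$ children, one of which is consumed by the constant) shows the cardinalities match.

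Third comes the only real computation, the $L^p$ estimate (vii), from which (viii) is an immediate corollary. Fix $u$ with $h^Q_u\not\equiv0$ and write $a:=\mu(R_{u+1})=\mu(Q_u)$ and $b:=\mu(S_u)$, so that $0<a\le\mu(R_1)\le b$ by the ordering of the children. For $1\le p<\infty$ a direct evaluation on the two sets $R_{u+1}$ and $S_u$ gives
\[
    \Norm{h^Q_u}{L^p(X,\mu)}^p=c_u^{\,p}\bigl(a^{1-p}+b^{1-p}\bigr).
\]
Since $1-p\le0$ and $a\le b$ we have $a^{1-p}\le a^{1-p}+b^{1-p}\le 2a^{1-p}$, and since $\tfrac12\le b/(a+b)\le1$ we have $\tfrac12 a\le c_u^2=ab/(a+b)\le a$; hence $\Norm{h^Q_u}{L^p(X,\mu)}^p\simeq a^{p/2}a^{1-p}=a^{1-p/2}$, i.e.\ $\Norm{h^Q_u}{L^p(X,\mu)}\simeq\mu(Q_u)^{1/p-1/2}$, with implied constants independent of $Q$, $u$, the dyadic system, and the geometric constants. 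The endpoint $p=\infty$ is the direct computation $\Norm{h^Q_u}{L^\infty(X,\mu)}=c_u\max(a^{-1},b^{-1})=c_u a^{-1}\simeq a^{-1/2}=\mu(Q_u)^{-1/2}$. Multiplying the cases $p=1$ and $p=\infty$ yields (viii): $\Norm{h^Q_u}{L^1(X,\mu)}\cdot\Norm{h^Q_u}{L^\infty(X,\mu)}\simeq\mu(Q_u)^{1/2}\cdot\mu(Q_u)^{-1/2}=1$; in fact the explicit formula gives $\Norm{h^Q_u}{L^1(X,\mu)}\cdot\Norm{h^Q_u}{L^\infty(X,\mu)}=2c_u\cdot c_u a^{-1}=2b/(a+b)\in[1,2]$, so the comparability here is absolute.

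I do not expect a deep obstacle: once the construction is fixed, (i)--(vi) are bookkeeping and (vii)--(viii) are the few lines above. The two points that need care are (a) children of zero $\mu$-measure, which must be arranged so that the family $\{h^Q_u\}$ still has exactly $M_Q-1$ members (the superfluous ones being identically zero) and so that (vi) is read in $L^2(Q,\mu)$ while (vii)--(viii) are asserted only for the nonzero $h^Q_u$; and (b) the enumeration of $\ch(Q)$ by decreasing measure, which is precisely what yields $a\le b$ and hence makes the implied constants in (vii)--(viii) uniform and lets $Q_u$ be the single child $R_{u+1}$ --- without this ordering one would only obtain $\Norm{h^Q_u}{L^p(X,\mu)}\simeq\min(a,b)^{1/p-1/2}$. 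The more substantial work of Section~\ref{sec:Haarfunctions}, namely the unconditional $L^p$ and $\mu$-a.e.\ convergence of the Haar expansion, belongs to Theorem~\ref{thm:convergence} rather than to Theorem~\ref{prop:HaarFuncProp} and rests on martingale convergence for the filtration $(\mathscr{D}_k)_{k\in\Z}$.
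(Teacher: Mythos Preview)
Your proof is correct, and the construction you write down is (after a reindexing) exactly the paper's: your decreasing-measure enumeration $R_1,\dots,R_{M_Q}$ is the reverse of the paper's increasing-measure enumeration $Q_1,\dots,Q_{M_Q}$, your $S_u$ is the paper's $E_{M_Q-u+1}$, and your $h^Q_u$ is (up to sign and relabelling) the paper's $h^Q_{M_Q-u}$. Parts (i)--(vi) and (viii) are handled identically.

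The one genuine difference is in the estimate for (vii). The paper first establishes two auxiliary lemmata (Lemma~\ref{lem:indexing} and Lemma~\ref{lem:ordering}) showing that, for its indexing, $\mu(E_u)\simeq\mu(Q)$ and $\mu(E_{u+1})/\mu(E_u)\simeq 1$ with comparability constants depending on the geometric bound~$M$ on the number of children; these are then fed into the $L^p$ computation. You bypass both lemmata entirely: the single observation $a=\mu(R_{u+1})\le\mu(R_1)\le\mu(S_u)=b$, which follows immediately from your decreasing-measure ordering, is all that is needed to get $c_u^2\simeq a$ and $a^{1-p}+b^{1-p}\simeq a^{1-p}$. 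This is shorter, and as you note it yields absolute constants (bounded by~$2$) rather than constants depending on~$M$. The paper's route, on the other hand, works for the more general class of orderings satisfying~\eqref{eq:indexing}, not only the monotone one.
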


In the remainder of this section, we develop the Haar functions
and their properties and establish
Theorems~\ref{thm:convergence} and~\ref{prop:HaarFuncProp}. The
section is organized as follows: Section~\ref{sec:set-up}
describes our assumptions on the underlying space $(X,\rho)$
and measure~$\mu$,
Section~\ref{sec:sigmaalgebrascondexpectations} presents the
dyadic $\sigma$-algebras and conditional expectations we use,
Section~\ref{sec:indexing} provides our indexing of the
sub-cubes of a given dyadic cube, and
Section~\ref{sec:martingale} develops the martingale difference
decomposition that leads to our explicit definition of the Haar
functions. In Section~\ref{sec:martingale} we also deal with
the situation when a dyadic cube has only one child, and sum up
our Haar function definitions and results in
Theorems~\ref{thm:HaarFuncProp2} and~\ref{thm:convergence2},
which complete the proof of Theorems~\ref{thm:convergence}
and~\ref{prop:HaarFuncProp}.

%

\subsection{Set-up}\label{sec:set-up}
The set-up for Section~\ref{sec:Haarfunctions} is a
geometrically doubling quasi-metric space $(X,\rho)$ equipped
with a positive Borel measure $\mu$. We assume that the
$\sigma$-algebra of measurable sets $\mathscr{F}$ contains all
balls $B\subseteq X$ with $\mu(B)<\infty$. This implies that
$\mu$ is $\sigma$-finite, in other words, the sub-collection
\[
    \mathscr{F}^0
    :=\{F\in\mathscr{F}\colon \mu(F)<\infty \}
\]
contains a countable cover: there is a collection of at most
countably many sets $F_1$, $F_2, \ldots$ in~$\mathscr{F}^0$
such that
\[
    X
    = \bigcup_{i=1}^{\infty}F_i.
\]
The space $(X,\rho,\mu)$ is called a \textit{geometrically
doubling quasi-metric measure space}.

We emphasize that in this section
(Section~\ref{sec:Haarfunctions}) we do not assume that the
measure $\mu$ is doubling. That assumption is only needed in
Sections \ref{sec:functionclasses} and~\ref{sec:multiparameter}
below.

\subsection{Dyadic $\sigma$-algebras and conditional expectations}
\label{sec:sigmaalgebrascondexpectations} Let
$\mathscr{D}=\cup_{k\in \Z}\mathscr{D}_k$ be a fixed system of
dyadic cubes with the additional distinguished center point
property \eqref{eq:fixedpoint}. Let
$\mathscr{F}_k:=\sigma(\mathscr{D}_k)$ be the (dyadic)
$\sigma$-algebra generated by the countable partition
$\mathscr{D}_k$. It is an easy exercise to check that
\[
    \mathscr{F}_k
    = \left\{\bigcup_{\alpha\in I}Q^k_\alpha\colon I
        \subseteq \mathscr{A}_k \right\},
\]
the collection of all unions.

\begin{lemma}[Properties of the filtration $(\mathscr{F}_k)$]$\;$
    The family $(\mathscr{F}_k)$ of $\sigma$-algebras is a
    filtration, that is,
    $\mathscr{F}_i\subseteq\mathscr{F}_j\subseteq\mathscr{F}$
    for all $i<j$. Each $(X,\mathscr{F}_k,\mu)$ is
    $\sigma$-finite and
    \[
        \sigma\Big(\bigcup_{k\in\Z} \mathscr{F}_k\Big)
        = \mathscr{F}.
    \]
\end{lemma}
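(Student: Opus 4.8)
The plan is to verify the three assertions separately, each by a short deduction from the dyadic‐system axioms \eqref{eq:cover}--\eqref{eq:monotone}, using the description recorded just above that $\mathscr{F}_k=\sigma(\mathscr{D}_k)$ is exactly the collection of unions $\bigcup_{\alpha\in I}Q^k_\alpha$, $I\subseteq\mathscr{A}_k$; since each $\mathscr{A}_k$ is countable these are countable unions, and each $Q^k_\alpha$ is a Borel set, hence a member of $\mathscr{F}$. The filtration property is then immediate on both sides: $\mathscr{F}_k\subseteq\mathscr{F}$ because $\mathscr{F}$ is a $\sigma$-algebra containing all the cubes $Q^k_\alpha$; and to get $\mathscr{F}_i\subseteq\mathscr{F}_j$ for $i\le j$ I would iterate \eqref{eq:children}, applying it $j-i$ times to see that each $Q^i_\alpha$ is the union of those cubes $Q\in\mathscr{D}_j$ with $Q\subseteq Q^i_\alpha$ — a countable subfamily of $\mathscr{D}_j$ — so $Q^i_\alpha\in\mathscr{F}_j$, whence every element $\bigcup_{\alpha\in I}Q^i_\alpha$ of $\mathscr{F}_i$ lies in $\mathscr{F}_j$ as well.

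For the $\sigma$-finiteness of $(X,\mathscr{F}_k,\mu)$: by \eqref{eq:cover} the family $\{Q^k_\alpha:\alpha\in\mathscr{A}_k\}$ is a countable cover of $X$ by sets of $\mathscr{F}_k$, so it suffices to check that $\mu(Q^k_\alpha)<\infty$ for each $\alpha$, and this follows from the second inclusion $Q^k_\alpha\subseteq B(x^k_\alpha,C_1\delta^k)$ in \eqref{eq:contain} together with the finiteness of $\mu$ on balls. (Finiteness of $\mu$ on balls is genuinely needed here: the atoms of $\mathscr{F}_k$ are precisely the cubes $Q^k_\alpha$, so $(X,\mathscr{F}_k,\mu)$ is $\sigma$-finite exactly when every cube of $\mathscr{D}_k$ has finite measure.)

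For the identity $\sigma\big(\bigcup_{k\in\Z}\mathscr{F}_k\big)=\mathscr{F}$, write $\mathscr{G}:=\sigma\big(\bigcup_{k\in\Z}\mathscr{F}_k\big)$. The inclusion $\mathscr{G}\subseteq\mathscr{F}$ is immediate from $\mathscr{F}_k\subseteq\mathscr{F}$. For the reverse inclusion, since $\mathscr{F}$ (the Borel $\sigma$-algebra of $(X,\rho)$) is generated by the open subsets of $X$, it suffices to show that every open $U\subseteq X$ belongs to $\mathscr{G}$; this is the heart of the matter, and it rests on the fact that the cubes $Q^k(x)$ of \eqref{eq:Q(x,k)} shrink to $\{x\}$ as $k\to\infty$ (a consequence of \eqref{eq:contain} and $\delta<1$; cf.\ Lemma~\ref{lem:existsdyadiccube}). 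Concretely: given $x\in U$, choose $\eps>0$ with $B(x,\eps)\subseteq U$; writing $z$ for the center point of $Q^k(x)$, the inclusion $x\in Q^k(x)\subseteq B(z,C_1\delta^k)$ from \eqref{eq:contain} and the quasi-triangle inequality give $\rho(x,y)<2A_0C_1\delta^k$ for every $y\in Q^k(x)$, so $Q^k(x)\subseteq B(x,\eps)\subseteq U$ once $k$ is large enough. Hence every point of $U$ lies in a dyadic cube contained in $U$, and since $\mathscr{D}=\bigcup_{k\in\Z}\mathscr{D}_k$ is countable, $U$ is the union of the countably many cubes of $\mathscr{D}$ that it contains, each of which lies in $\mathscr{G}$; thus $U\in\mathscr{G}$, so $\mathscr{F}\subseteq\mathscr{G}$.

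I do not anticipate any real difficulty; everything reduces to the dyadic-system axioms. The one step deserving attention is the last one, where reconstructing an arbitrary open — hence Borel — set as a countable union of dyadic cubes relies on the diameter estimate built into \eqref{eq:contain} and on $\delta\in(0,1)$; and, as noted, the $\sigma$-finiteness of each $(X,\mathscr{F}_k,\mu)$ tacitly uses that $\mu$ is finite on balls.
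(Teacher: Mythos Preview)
Your proof is correct and follows essentially the same approach as the paper's own argument: both verify the filtration property from \eqref{eq:children}, deduce $\sigma$-finiteness from \eqref{eq:cover} and \eqref{eq:contain} together with finiteness of $\mu$ on balls, and establish the $\sigma$-algebra identity by reducing to open sets and writing each open set as a countable union of dyadic cubes. Your version is in fact more explicit, supplying the diameter estimate $\rho(x,y)<2A_0C_1\delta^k$ for $y\in Q^k(x)$ that justifies the existence of a cube $Q_x\ni x$ with $Q_x\subseteq U$, a step the paper asserts without details.
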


\begin{proof}
The first assertion is clear since every dyadic cube is a
finite union of smaller dyadic cubes. The collection
$\mathscr{D}_k\subseteq\mathscr{F}_k$ forms a countable cover,
and the second assertion follows immediately from the
assumption $\mu(B)<\infty$ imposed on balls. Recall that dyadic
cubes are Borel sets. Thus, $\cup_{k\in\Z}
\mathscr{F}_k\subseteq \mathscr{F}$ and consequently,
$\sigma(\cup_{k\in\Z} \mathscr{F}_k)\subseteq \mathscr{F}$.
Then suppose that $\Omega\in \mathscr{F}$. We may assume that
$\Omega$ is an open set.
For each $x\in\Omega$, there is $Q=Q_x\in\mathscr{D}$ such that
$x\in Q\subseteq\Omega$. Consequently, $\Omega$ is a
(countable) union of dyadic cubes, and hence, $\Omega\in
\sigma(\cup_{k\in\Z} \mathscr{F}_k)$.
\end{proof}

The proof for the following martingale convergence theorem can
be found, for example, in the lecture notes~\cite{Hytonen2009}.
\begin{thm}[Martingale convergence]
    Let $(X,\rho,\mu)$ be a geometrically doubling quasi-metric
    measure space. Suppose that $(\mathscr{F}_k)_{k\in\Z}$ is
    any filtration such that the spaces $(X,\mathscr{F}_k,\mu)$
    are $\sigma$-finite and
    \[
        \sigma\left(\bigcup_{k\in\Z}\mathscr{F}_k\right)
        = \mathscr{F}.
    \]
    Then for every $f\in L^p(\mathscr{F},\mu), 1< p<\infty$, there
    holds
    \[
        \E[f\vert \mathscr{F}_k]\to f \text{ as } k\to\infty.
    \]
    The convergence takes place both in the $L^p(X,\mu)$-norm
    and pointwise $\mu$-a.e.
\end{thm}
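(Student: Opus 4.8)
The plan is to recognize $\{\E[f\vert\mathscr{F}_k]\}_{k\in\Z}$ as a martingale relative to the filtration $(\mathscr{F}_k)$ and to run the classical two-step martingale argument: first establish convergence on a dense subclass where it is essentially trivial, and then propagate it to all of $L^p$, using the uniform boundedness (contractivity) of the conditional expectation operators for the norm statement and Doob's $L^p$ maximal inequality for the pointwise statement. I would begin by recalling that, since each $(X,\mathscr{F}_k,\mu)$ is $\sigma$-finite, the conditional expectation $\E[\cdot\vert\mathscr{F}_k]$ is well defined on $L^p(\mathscr{F},\mu)$ and, by the conditional form of Jensen's inequality, is a positive linear contraction on $L^p(X,\mu)$ for every $1\le p\le\infty$; in the concrete case of the dyadic filtration it is simply the averaging operator $\E[f\vert\mathscr{F}_k](x)=\mu(Q^k(x))^{-1}\int_{Q^k(x)}f\,d\mu$, which is meaningful because every dyadic cube is contained in a ball, hence has finite measure.

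For the dense subclass I would set $\mathcal{V}:=\bigcup_{j\in\Z}L^p(\mathscr{F}_j,\mu)\subseteq L^p(\mathscr{F},\mu)$. If $g\in L^p(\mathscr{F}_j,\mu)$, then $\E[g\vert\mathscr{F}_k]=g$ for every $k\ge j$, so for such $g$ the conclusion is immediate, both in norm and pointwise. The key point is that $\mathcal{V}$ is dense in $L^p(\mathscr{F},\mu)$: the union $\mathscr{A}:=\bigcup_{k\in\Z}\mathscr{F}_k$ is an algebra generating $\mathscr{F}$, and since $\mu$ is $\sigma$-finite, the standard approximation lemma shows that every set $A\in\mathscr{F}$ with $\mu(A)<\infty$ can be approximated in measure by sets of $\mathscr{A}$; hence $\mathscr{A}$-measurable simple functions with support of finite measure are dense in $L^p(\mathscr{F},\mu)$, and these all lie in $\mathcal{V}$.

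The norm convergence then follows quickly: given $f\in L^p(\mathscr{F},\mu)$ and $\varepsilon>0$, choose $g\in\mathcal{V}$ with $\|f-g\|_{L^p}<\varepsilon$, say $g$ is $\mathscr{F}_j$-measurable; then for $k\ge j$,
\[
    \|\E[f\vert\mathscr{F}_k]-f\|_{L^p}
    \le \|\E[f-g\vert\mathscr{F}_k]\|_{L^p}+\|\E[g\vert\mathscr{F}_k]-g\|_{L^p}+\|g-f\|_{L^p}
    \le 2\|f-g\|_{L^p}<2\varepsilon ,
\]
using contractivity of $\E[\cdot\vert\mathscr{F}_k]$ and $\E[g\vert\mathscr{F}_k]=g$. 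For the pointwise statement I would invoke Doob's $L^p$ maximal inequality, $\big\|\sup_{k}|\E[f\vert\mathscr{F}_k]|\big\|_{L^p}\le \tfrac{p}{p-1}\|f\|_{L^p}$, valid for $1<p<\infty$ in this $\sigma$-finite setting, and consider the oscillation functional $\mathcal{O}f:=\limsup_{k\to\infty}\E[f\vert\mathscr{F}_k]-\liminf_{k\to\infty}\E[f\vert\mathscr{F}_k]\ge 0$. It is subadditive, vanishes identically on $\mathcal{V}$, and is dominated pointwise by $2\sup_k|\E[\cdot\vert\mathscr{F}_k]|$; hence for any $g\in\mathcal{V}$, $\mathcal{O}f=\mathcal{O}(f-g)\le 2\sup_k|\E[f-g\vert\mathscr{F}_k]|$, so $\|\mathcal{O}f\|_{L^p}\le \tfrac{2p}{p-1}\|f-g\|_{L^p}$, which can be made arbitrarily small by the density of $\mathcal{V}$. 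Thus $\mathcal{O}f=0$ $\mu$-a.e., i.e.\ $\lim_{k\to\infty}\E[f\vert\mathscr{F}_k]$ exists $\mu$-a.e.; and since the sequence already converges to $f$ in $L^p$, the a.e.\ limit must coincide with $f$. The only non-formal ingredients are the density of $\mathcal{V}$ (a monotone-class/approximation argument for $\sigma$-finite measures) and Doob's maximal inequality for $\Z$-indexed martingales over a $\sigma$-finite space; both are classical, so I expect no genuine obstacle — the content is entirely standard martingale theory, which is why the paper merely cites it from \cite{Hytonen2009}.
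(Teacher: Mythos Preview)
Your argument is correct and is precisely the standard density-plus-maximal-inequality proof of the martingale convergence theorem; the paper itself does not prove the statement but simply cites \cite{Hytonen2009}, whose treatment follows the same lines you outline. There is nothing to add.
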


We consider the filtration $(\mathscr{F}_k)_{k\in \Z}$
generated by the dyadic cubes, the associated conditional
expectation operators, and the corresponding martingale
differences. For  $k\in\Z$ and $Q\in\mathscr{D}_k$, the
conditional expectation and its local version are defined for
$f\in L^1_{\loc}(X,\mathscr{F})$ by
\[
    \E_kf
    := \E[f\vert \mathscr{F}_k]
    \quad \text{and} \quad \E_Qf:=1_Q\E_kf,
\]
and they admit the explicit representation
\[
    \E_kf
    = \sum_{Q\in\mathscr{D}_k}\frac{1_Q}{\mu(Q)}\int_Qfd\mu
    = \sum_{Q\in\mathscr{D}_k}\E_Q f.
\]
If $\mu(Q) = 0$ for some cube, then the corresponding term in
the above series is taken to be $0$. We use the shorthand
notation
\[
    \langle f\rangle_Q
    := \frac{1}{\mu(Q)}\int_{Q}f\, d\mu
\]
for the integral average of $f$ on $Q$. With this notation,
$\E_Q f = 1_Q\langle f\rangle_Q$.

The martingale difference operators and their local versions
are defined by
\[
    \Db_k f
    := \E_{k+1}f-\E_kf , \quad
    \Db_Q f
    := 1_Q\Db_k f.
\]
By martingale convergence, for each $m\in \Z$,
\begin{equation}\label{eq:representationf}
    f
    =\sum_{k\geq m} \Db_k f +\E_m f
    =\sum_{k\geq m}\sum_{Q\in\mathscr{D}_k} \Db_Q f
        + \sum_{Q\in\mathscr{D}_m}\E_Q f.
\end{equation}


\subsection{The indexing of the sub-cubes}\label{sec:indexing}
Recall that a classical $L^2$-normalized Haar function $h_Q$ in
$\R^n$, associated to a standard Euclidean dyadic cube $Q$,
satisfies the size conditions $\Norm{h_Q}{L^1}=\abs{Q}^{1/2}$
and $\Norm{h_Q}{L^\infty}=\abs{Q}^{-1/2}$. In the present
context, for the cancellative Haar functions there is no upper
bound for the norm $\Norm{h^Q_u}{L^\infty}$ in terms of the
measure $\mu(Q)$. However, this will be compensated for by the
smallness of the norm $\Norm{h^Q_u}{L^1}$, in the sense that
the following estimate still holds:
\[
    \Norm{h^Q_u}{L^1}\Norm{h^Q_u}{L^\infty}
    \lesssim 1.
\]
(This type of estimate is established in~\cite{LSMP} for Haar
functions on~$\R$.) To obtain this control, we introduce the
following ordering of the sub-cubes of~$Q$.

Let the index sets $\mathscr{A}_k$, indexing the cubes
$Q^k_\alpha$ of generation $k$, be initial intervals (finite or
infinite) in $\N$. Recall from \eqref{eq:children} that for
$Q\in \mathscr{D}_k$, the cardinality of the set of dyadic
sub-cubes $\ch(Q):= \{R\in \mathscr{D}_{k+1}\colon R\subseteq
Q\}$ is bounded by
\[
    \# \ch(Q)
    =: M_Q \in [1,M];\quad
    M = M(A_0,A_1,\delta)
    < \infty .
\]
The number $M_Q$ depends, of course, on $Q$ but we may omit
this dependence in the notation whenever it is clear from the
context. For the usual dyadic cubes in the Euclidean space
$\R^n$ we have $M_Q = 2^n$ for every $Q$. We order the
sub-cubes from the ``smallest''  to the ``largest''. More
precisely, we have the following result.

\begin{lemma}\label{lem:indexing}
    Given $Q\in\mathscr{D}$, there is an indexing of the sub-cubes
    $Q_j\in \ch(Q)$ such that
    \begin{equation}\label{eq:indexing}
        \sum_{j=u}^{M_Q}\mu(Q_j)
        \geq [1-(u-1)M_Q^{-1}]\mu(Q)
    \end{equation}
    for every $u = 1$, $2$, \ldots, $M_Q$.
\end{lemma}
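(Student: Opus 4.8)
The plan is to choose the indexing so that the children of $Q$ are listed in order of increasing $\mu$-measure, and then to observe that the desired bound \eqref{eq:indexing} is, after complementation, an elementary averaging inequality for the smallest $u-1$ children.

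Concretely, I would first enumerate $\ch(Q) = \{Q_1, \dots, Q_{M_Q}\}$ so that $\mu(Q_1) \le \mu(Q_2) \le \cdots \le \mu(Q_{M_Q})$ (this is possible since $\ch(Q)$ is finite). By the partition property in \eqref{eq:children}, the children of $Q$ are pairwise disjoint with union $Q$, so $\sum_{j=1}^{M_Q}\mu(Q_j) = \mu(Q)$. Hence, for each $u\in\{1,\dots,M_Q\}$,
\[
    \sum_{j=u}^{M_Q}\mu(Q_j) = \mu(Q) - \sum_{j=1}^{u-1}\mu(Q_j),
\]
and \eqref{eq:indexing} is equivalent to the inequality $\sum_{j=1}^{u-1}\mu(Q_j) \le (u-1)M_Q^{-1}\mu(Q)$.

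The only remaining point is this last inequality, which uses just that $Q_1,\dots,Q_{u-1}$ are the $u-1$ children of smallest measure. I would record the general fact that for any nondecreasing finite sequence $0 \le a_1 \le \cdots \le a_n$ and any $k$ with $1\le k\le n$ one has $\tfrac1k\sum_{j=1}^k a_j \le \tfrac1n\sum_{j=1}^n a_j$: indeed $(n-k)\sum_{j=1}^k a_j \le k(n-k)a_k \le k(n-k)a_{k+1} \le k\sum_{j=k+1}^n a_j$ (the middle step being vacuous when $k=n$), and adding $k\sum_{j=1}^k a_j$ to both ends gives $n\sum_{j=1}^k a_j \le k\sum_{j=1}^n a_j$. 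Applying this with $a_j = \mu(Q_j)$, $n = M_Q$, $k = u-1$ yields $\sum_{j=1}^{u-1}\mu(Q_j) \le (u-1)M_Q^{-1}\mu(Q)$, which is exactly what was needed; the cases $u=1$ and $M_Q=1$ are trivial since then both sides of \eqref{eq:indexing} equal $\mu(Q)$, and children of measure zero cause no difficulty.

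There is no genuine obstacle here: the content of the lemma is entirely in the choice of ordering (smallest to largest), and once that choice is made the estimate reduces to the averaging inequality above. The remark worth isolating, when writing this up, is precisely why the smallest $u-1$ children — rather than an arbitrary $(u-1)$-subset — have total measure at most the proportion $(u-1)/M_Q$ of $\mu(Q)$; this is what later forces the $L^\infty$ norm of the cancellative Haar function built from this ordering to be controlled, in the reciprocal sense described before the lemma.
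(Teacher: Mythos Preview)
Your proof is correct, but it follows a different route from the paper's main argument. The paper proves the lemma by a greedy inductive construction: assuming an indexing of $Q_1,\dots,Q_m$ already satisfies \eqref{eq:indexing} for $u\le m$, it shows by a counting argument that some $Q_n$ with $n\ge m$ can be relabeled as $Q_m$ so that \eqref{eq:indexing} extends to $u=m+1$. Only \emph{after} completing this proof does the paper remark, without details, that ordering the children by increasing measure also works ``as a short calculation shows.'' Your argument supplies exactly that calculation: you take the monotone ordering from the outset and reduce \eqref{eq:indexing} to the elementary fact that the average of the $k$ smallest terms of a nondecreasing sequence is at most the overall average. Your approach is more direct and identifies the canonical ordering immediately; the paper's approach has the minor advantage of showing that \emph{some} ordering exists without committing to a specific one, but in practice both yield the same indexing and the same downstream estimates on the Haar functions.
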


\begin{proof}
The case $u = 1$ is clear for any ordering of the sub-cubes.
For $u > 1$, fix some indexing. If $M_Q=1$, we are done.
Otherwise, we proceed as follows. Suppose we have an indexing
of the sub-cubes $Q_1,\ldots ,Q_{m}, m\geq 1$, such that
\eqref{eq:indexing} holds for all $u=1,\ldots, m<M_Q$. In
particular,
\begin{equation*}
    \begin{split}
    [1-(m-1)M_Q^{-1}]\mu(Q) &\leq  \sum_{j=m}^{M_Q}\mu(Q_j)
    = \frac{1}{M_Q-m} \sum_{j=m}^{M_Q}
        \sum_{\substack{\ell=m\\ \ell\neq j}}^{M_Q}\mu(Q_\ell) \\
    & \leq \frac{M_Q-(m-1)}{M_Q-m}\max_{j\in\{m,\ldots,M_Q\} } \mu
        \Big(\big(\bigcup_{\ell=m}^{M_Q}\mu(Q_\ell)\big)\setminus Q_j \Big).
    \end{split}
\end{equation*}
Thus, we may choose $n\in\{m,\ldots,M_Q\}$ such that
\[
    \mu\Big(\big(\bigcup_{\ell=m}^{M_Q}\mu(Q_\ell)\big)\setminus Q_n \Big)
    \geq \frac{M_Q-m}{M_Q-(m-1)}\cdot [1-(m-1)M_Q^{-1}]\mu(Q)
    = (1-mM_Q^{-1})\mu(Q).
\]
We reorder the cubes $Q_j$ with $j\geq m$ by setting $m = n$,
obtaining an indexing such that inequality~\eqref{eq:indexing}
holds for all $u = 1$, \ldots, $m+1$. If $M_Q > m+1$, we
proceed as before. The claim follows.
\end{proof}

We observe that one way to obtain an indexing satisfying
inequality~\eqref{eq:indexing} of Lemma~\ref{lem:indexing} is
to order the cubes so that $\mu(Q_1) \leq \mu(Q_2) \leq \cdots
\leq \mu(Q_{M_Q})$, as a short calculation shows.


From now on, let the indexing of the sub-cubes $Q_j$ of a given
cube $Q\in \mathscr{D}$ be one provided by
Lemma~\ref{lem:indexing}. For each $u\in\{1,\ldots,M_Q\}$, set
\begin{equation}\label{def:setsEk}
    E_u
    = E_u(Q)
    := \bigcup_{j=u}^{M_Q}Q_j.
\end{equation}
Then, in particular, $E_1 = Q, E_{M_Q} = Q_{M_Q}$, and $E_u =
Q_u\cup E_{u+1}$ where the union is disjoint.

\begin{lemma}\label{lem:ordering}
    For every $u\in\{1,\ldots ,M_Q-1\}$,
    \begin{equation}\label{eq:ordering1}
        \frac{\mu(Q)}{M}\leq \mu(E_u)\leq \mu(Q)
    \end{equation}
    and
    \begin{equation}\label{eq:ordering2}
        \frac{1}{M}\leq \frac{\mu(E_{u+1})}{\mu(E_u)}\leq \frac{M}{2}
    \end{equation}
    where $M$ is the uniform bound for the maximal number of
    sub-cubes. In particular, $\mu(E_u)\simeq \mu(Q)$ and
    $\mu(E_{u+1})/\mu(E_u)\simeq 1$ for all $u$.
\end{lemma}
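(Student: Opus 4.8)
The plan is to deduce everything from two facts. First, the children $Q_j\in\ch(Q)$ are pairwise disjoint, so $\mu(E_u)=\sum_{j=u}^{M_Q}\mu(Q_j)$ for every $u$, and in particular $E_{u+1}\subseteq E_u\subseteq E_1=Q$ forces $\mu(E_{u+1})\le\mu(E_u)\le\mu(Q)$. Second, the indexing inequality \eqref{eq:indexing} of Lemma~\ref{lem:indexing}, which in this notation reads $\mu(E_u)\ge[1-(u-1)M_Q^{-1}]\mu(Q)$ for every $u\in\{1,\dots,M_Q\}$. We may clearly assume $\mu(Q)>0$, since otherwise all quantities involved vanish.

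First I would prove \eqref{eq:ordering1}. The upper bound $\mu(E_u)\le\mu(Q)$ is just the containment $E_u\subseteq E_1=Q$. For the lower bound, apply \eqref{eq:indexing} at the \emph{largest} admissible index $u=M_Q$ to get $\mu(E_{M_Q})\ge M_Q^{-1}\mu(Q)$; since $u\mapsto\mu(E_u)$ is non-increasing, this yields $\mu(E_u)\ge\mu(E_{M_Q})\ge M_Q^{-1}\mu(Q)\ge M^{-1}\mu(Q)$ for \emph{all} $u\in\{1,\dots,M_Q\}$, which is more than enough. (For $u\le M_Q-1$ the same inequality even gives the sharper bound $2M_Q^{-1}\mu(Q)$, but we will not need this.)

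Next, \eqref{eq:ordering2}. The upper bound is immediate from nesting: $E_{u+1}\subseteq E_u$ gives $\mu(E_{u+1})/\mu(E_u)\le1$, and since the claim is made only for $u\in\{1,\dots,M_Q-1\}$ we have $M\ge M_Q\ge u+1\ge2$, hence $1\le M/2$. For the lower bound, combine $\mu(E_u)\le\mu(Q)$ with the bound $\mu(E_{u+1})\ge M_Q^{-1}\mu(Q)$ established above, applied now at the index $u+1\le M_Q$: $\mu(E_{u+1})/\mu(E_u)\ge\mu(E_{u+1})/\mu(Q)\ge M_Q^{-1}\ge M^{-1}$. The two ``in particular'' comparabilities $\mu(E_u)\simeq\mu(Q)$ and $\mu(E_{u+1})/\mu(E_u)\simeq1$ then follow at once, with implied constants depending only on $M=M(A_0,A_1,\delta)$.

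There is essentially no real obstacle. The one point that needs attention is that \eqref{eq:indexing} must be invoked at its endpoint index $M_Q$, not merely for $u\le M_Q-1$, so that a uniform lower bound is available for $\mu(E_{M_Q})$ as well; this is legitimate because Lemma~\ref{lem:indexing} asserts \eqref{eq:indexing} for all $u=1,\dots,M_Q$. It is also worth remarking that when $M_Q=1$ the index set $\{1,\dots,M_Q-1\}$ is empty, so the statement is vacuous in that case and one may freely assume $M\ge2$ whenever there is anything to prove.
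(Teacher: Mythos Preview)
Your proof is correct and follows essentially the same approach as the paper: both deduce everything from Lemma~\ref{lem:indexing} together with the trivial containments $E_{u+1}\subseteq E_u\subseteq Q$. The one noteworthy difference is in the upper bound of~\eqref{eq:ordering2}: you observe the immediate estimate $\mu(E_{u+1})/\mu(E_u)\le 1\le M/2$, whereas the paper applies~\eqref{eq:indexing} to $\mu(E_u)$ and the inclusion $E_{u+1}\subseteq Q$ to obtain $\mu(E_{u+1})/\mu(E_u)\le M_Q/(M_Q-(u-1))\le M_Q/2$; your route is simpler and in fact yields the sharper constant~$1$.
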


\begin{proof}
The first estimate in \eqref{eq:ordering1} follows directly
from Lemma~\ref{eq:indexing}, and the second estimate is
immediate. For \eqref{eq:ordering2}, by
Lemma~\ref{eq:indexing},
    \begin{align*}
    \frac{1}{M}
    & \leq \frac{1}{M_Q}\leq \frac{M_Q-u}{M_Q}
    = \frac{[1-uM_Q^{-1}]\mu(Q)}{\mu(Q)}
    \leq \mu(E_u)^{-1}\sum_{j=u+1}^{M_Q}\mu(Q_j)
    = \frac{\mu(E_{u+1})}{\mu(E_u)}\\
    & \leq\left(\sum_{j=u}^{M_Q}\mu(Q_j)\right)^{-1}\mu(Q)
    \leq \frac{1}{[1-(u-1)M_Q^{-1}]}
    = \frac{M_Q}{M_Q-(u-1)}
    \leq \frac{M_Q}{2}
    \leq \frac{M}{2}.\qedhere
    \end{align*}
\end{proof}

\subsection{Martingale difference
decomposition}\label{sec:martingale}
We will decompose the operators $\E_k$ and $\Db_k$ by
representing the projections $\E_Q$ and $\Db_Q$ in terms of
Haar functions as
\begin{equation}\label{eq:projections}
    \E_Q f
    = \langle f, h^Q_0\rangle h^Q_0,\quad
    \Db_Q f
    = \sum_{u=1}^{M_Q-1} \langle f, h^Q_u\rangle h^Q_u.
\end{equation}
Here $h^Q_0:= \mu(Q)^{-1/2}1_Q$ is a non-cancellative Haar
function and $\{h^Q_u\colon u=1,\ldots,M_Q-1\}$ are
cancellative ones. To this end, given $Q\in\mathscr{D}_k$, let
the indexing of the sub-cubes $\{Q_j\colon j=1,\ldots,M_Q\}$ be
the one provided by Lemma~\ref{lem:indexing}.  Generalize the
notation $\E_Q$ by denoting
\[
    \E_Af
    := \frac{1_A}{\mu(A)}\int_{A}f \, d\mu
\]
for any measurable set $A$ with $\mu(A)>0$. With this notation
we obtain the splitting of the martingale difference $\Db_Q$ as
\begin{align*}
    \Db_Q
    &= (\E_{k+1}-\E_k)1_Q
        = \sum_{u=1}^{M}\E_{Q_u}-\E_Q
        = \sum_{u=1}^{M-1}\E_{Q_u}+(\E_{Q_M}-\E_Q)\\
    &= \sum_{u=1}^{M-1}\E_{Q_u}+\sum_{u=1}^{M-1}(\E_{E_{u+1}}-\E_{E_u})
        = \sum_{u=1}^{M-1}\big(\E_{Q_u}+\E_{E_{u+1}}-\E_{E_u} \big)
        =: \sum_{u=1}^{M-1} \Db^Q_{u};
\end{align*}
here $E_u$ is the set defined in \eqref{def:setsEk} with
$E_1=Q$ and $E_M=Q_M$.

Take a closer look at the operator $\Db^Q_u$. If $\mu(Q_u) =
0$, then $\Db^Q_uf = 0$, and we define the corresponding Haar
function to be $h^Q_u \equiv 0$. For $\mu(Q_u) > 0$, we write
(recall that $E_u = Q_u\cup E_{u+1}$ with a disjoint union)
\begin{align*}
    \Db^Q_u f & =\big(\E_{Q_u}+\E_{E_{u+1}}-\E_{E_u} \big)f\\
    & =\frac{1_{Q_u}}{\mu(Q_u)}\int_{Q_u}fd\mu + \frac{1_{E_{u+1}}}{\mu(E_{u+1})}\int_{E_{u+1}}fd\mu
    -\frac{1_{E_u}}{\mu(E_u)}\int_{E_u}fd\mu\\
    & =\frac{1_{Q_u}}{\mu(Q_u)}\int_{Q_u}fd\mu + \frac{1_{E_{u+1}}}{\mu(E_{u+1})}\int_{E_{u+1}}fd\mu
    -\frac{1_{Q_u}+1_{E_{u+1}}}{\mu(E_u)}\left(\int_{Q_u}fd\mu+\int_{E_{u+1}}fd\mu\right)\\
    &=\frac{\mu(E_{u+1})\mu(Q_u)}{\mu(E_u)} \left[
    \frac{1_{Q_u}}{\mu(Q_u)}\int \frac{1_{Q_u}}{\mu(Q_u)}fd\mu
    +\frac{1_{E_{u+1}}}{\mu(E_{u+1})}\int \frac{1_{E_{u+1}}}{\mu(E_{u+1})}fd\mu\right. \\
    & \qquad\left. {}-\frac{1_{Q_u}}{\mu(Q_u)}\int \frac{1_{E_{u+1}}}{\mu(E_{u+1})}fd\mu
    -\frac{1_{E_{u+1}}}{\mu(E_{u+1})}\int \frac{1_{Q_u}}{\mu(Q_u)}fd\mu \right]\\
    & = \frac{\mu(E_{u+1})\mu(Q_u)}{\mu(E_u)} \left(
    \frac{1_{Q_{u}}}{\mu(Q_{u})}-\frac{1_{E_{u+1}}}{\mu(E_{u+1})}\right)
    \int \left( \frac{1_{Q_{u}}}{\mu(Q_{u})}-\frac{1_{E_{u+1}}}{\mu(E_{u+1})}\right)fd\mu\\
    & =: h^Q_u\int h^Q_u fd\mu = \langle f, h^Q_u\rangle h^Q_u,
\end{align*}
where
\begin{equation}\label{def:HaarFunc}
    h^Q_u
    := a_u1_{Q_u}-b_u1_{E_{u+1}};\quad
    a_u
    := \frac{\mu(E_{u+1})^{1/2}}{\mu(Q_u)^{1/2}\mu(E_u)^{1/2}},
    b_u
    := \frac{\mu(Q_{u})^{1/2}}{\mu(E_u)^{1/2}\mu(E_{u+1})^{1/2}}.
\end{equation}

\begin{remark}
We note that it may happen that a given cube $Q$ has only one
child $R$, so that $R = Q$ as sets. In this case the
formula~\eqref{def:HaarFunc} is not meaningful but also not
relevant, as we do not need to add a cancellative Haar function
corresponding to an ``only child''~$R$. We examine this
situation more closely. A cube can be its own only child for
finitely many generations, or even forever. In the first case,
it means there will not be cancellative Haar functions
associated to the cube until it truly subdivides. In the second
case, it turns out that the cube $Q$ must be a single point,
and the point must be an isolated point. In a geometrically
doubling metric space there are at most countably many isolated
points, and the only ones that contribute something are the
ones that have positive (finite) measure (that is,
point-masses).

Given a cube that is a point-mass, there are two scenarios:
either the cube is its own parent forever, or else the cube is
a proper child of its parent. In the first case, X is a point
mass with finite measure, and in that case we need to include
the function $1_X/ \mu(X)^{1/2}$ to get a basis; in fact that
is the only element in the basis (it is an orthonormal basis,
although the orthogonality holds by default because there are
no pairs to be checked). In the second case, we don't need to
add any functions to the basis. In fact, one may then wonder
whether the characteristic function of the point-mass set can
be represented with the Haar functions we already have. The
answer is yes: we can always represent the characteristic
function of any given cube using the Haar functions associated
to ancestors of the given cube, much as on~$\R$ one can
represent the function $1_{[0,1)}$ in terms of the Haar
functions corresponding to the intervals $[0,2^n)$ for $n\geq
1$; see Project~9.7 in~\cite{PerW}. For all this and more,
see~\cite{Wei}.
\end{remark}


\begin{thm}\label{thm:HaarFuncProp2}
    Let $(X,\rho)$ be a geometrically doubling quasi-metric
    space and suppose $\mu$ is a positive Borel measure on~$X$
    with the property that $\mu(B) < \infty$ for all balls
    $B\subseteq X$. For each $Q\in\D$, let
    \[
        h^Q_0
        := \mu(Q)^{-1/2}1_Q
    \]
    and for $u = 1$, $2$, \ldots, $M_Q - 1$ let
    \[
        h^Q_u
        :=  \begin{cases}
                0, & \text{if $\mu(Q_u) = 0$}; \\
                a_u 1_{Q_u} - b_u 1_{E_{u + 1}}, & \text{if $\mu(Q_u) > 0$}, \\
            \end{cases}
    \]
    where
    \[
        a_u
        := \frac{\mu(E_{u+1})^{1/2}}{\mu(Q_u)^{1/2}\mu(E_u)^{1/2}},
        \qquad
        b_u
        := \frac{\mu(Q_{u})^{1/2}}{\mu(E_u)^{1/2}\mu(E_{u+1})^{1/2}}.
    \]
    The Haar functions $h^Q_u$, $Q\in\mathscr{D}$, $u =
    0$, $1$, \ldots, $M_Q - 1$, have the following properties:
    \begin{itemize}
        \item[(i)] each $h^Q_u$ is a simple
            Borel-measurable real function on $X$;

        \item[(ii)] each $h^Q_u$ is supported on $Q$;

        \item[(iii)] each $h^Q_u$ is constant on each $R\in
            \ch (Q)$;

        \item[(iv)] $\int h^Q_u\, d\mu = 0$ for $u = 1$,
            $2$, \ldots, $M_Q - 1$ (cancellation);

        \item[(v)] $\langle h^Q_u,h^Q_{u'}\rangle = 0$ if
            $u\neq u'$, $u$, $u'\in \{0, 1, \ldots, M_Q -
            1\}$;

        \item[(vi)] the collection
            \[
                \{h^Q_u : u = 0, 1, \ldots, M_Q - 1\}
            \]
            is an orthogonal basis for the vector
            space~$V(Q)$ of all functions on $Q$ that are
            constant on each sub-cube $R\in\ch(Q)$;

        \item[(vii)] for $u = 1$, $2$, \ldots, $M_Q - 1$,
            if $h^Q_u\not\equiv 0$ then
            \[
                \Norm{h^Q_u}{L^p(X,\mu)}
                \simeq \mu(Q_u)^{\frac{1}{p} - \frac{1}{2}}
                \quad \text{for}~1 \leq p \leq \infty;
            \]
    \end{itemize}
    and
    \begin{itemize}
        \item[(viii)] for $u = 0$, $1$, \ldots, $M_Q - 1$,
            we have
            \[
                \Norm{h^Q_u}{L^1(X,\mu)}\cdot
                \Norm{h^Q_u}{L^\infty(X,\mu)} \simeq 1.
            \]
    \end{itemize}
\end{thm}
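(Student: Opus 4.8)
The plan is to verify the eight properties in turn, with essentially all the work concentrated in the size estimates (vii)--(viii); the rest is either read off the explicit formula or already contained in Section~\ref{sec:martingale}. Throughout I fix $Q=Q^k_\alpha$ with its children $Q_1,\dots,Q_{M_Q}$ indexed as in Lemma~\ref{lem:indexing} and the sets $E_u$ of~\eqref{def:setsEk}, and I will use two preliminary facts: every cube has finite measure (since $Q\subseteq B(Q)$ and balls have finite measure), and $\mu(Q_{M_Q})\geq\mu(Q)/M_Q$ by~\eqref{eq:indexing}, so that $\mu(Q)>0$ forces $\mu(Q_{M_Q})>0$. Properties (i)--(iv) I would then read directly off $h^Q_u=a_u1_{Q_u}-b_u1_{E_{u+1}}$ (and $h^Q_0=\mu(Q)^{-1/2}1_Q$): this is a finite real combination of indicators of Borel sets, hence a simple Borel function, giving (i); its support lies in $Q_u\cup E_{u+1}\subseteq Q$, giving (ii); on a child $Q_j$ it equals $a_u$, $-b_u$, or $0$ according as $j=u$, $j>u$, or $j<u$ (and $h^Q_0\equiv\mu(Q)^{-1/2}$ on $Q$), giving (iii); and $\int h^Q_u\,d\mu=a_u\mu(Q_u)-b_u\mu(E_{u+1})=0$ by a one-line cancellation of the factors in~\eqref{def:HaarFunc}, giving (iv). When $h^Q_u\equiv 0$ all of (i)--(v) are trivial and the estimates (vii)--(viii) are vacuous by their hypothesis $h^Q_u\not\equiv 0$.

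For the orthogonality (v) I would argue by nesting together with (iv). If $0=u<u'$, then $h^Q_{u'}$ has mean zero and lies in $Q$, on which $h^Q_0$ is constant, so $\langle h^Q_0,h^Q_{u'}\rangle=\mu(Q)^{-1/2}\int h^Q_{u'}\,d\mu=0$. If $1\leq u<u'$, then $\supp h^Q_{u'}\subseteq E_{u'}\subseteq E_{u+1}$, and $h^Q_u\equiv-b_u$ on $E_{u+1}$ (because $Q_u\cap E_{u+1}=\emptyset$), so the pairing collapses to $-b_u\int h^Q_{u'}\,d\mu=0$. For (vi) I would not re-prove anything but invoke the martingale difference decomposition of Section~\ref{sec:martingale}, where it is shown that $\E_Q f+\Db_Q f=\sum_{u=0}^{M_Q-1}\langle f,h^Q_u\rangle h^Q_u$; since the left-hand side is $1_Q\E_{k+1}f=\sum_j 1_{Q_j}\langle f\rangle_{Q_j}$, the $L^2(X,\mu)$-orthogonal projection of $f$ onto $V(Q)$, the $h^Q_u$ span $V(Q)$, and with (v) the nonzero ones among them form an orthogonal basis, their number equal to the number of children of positive measure.

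The substantive step is (vii), and the engine is Lemma~\ref{lem:ordering}, which gives $\mu(E_u)\simeq\mu(E_{u+1})\simeq\mu(Q)$ and $\mu(E_{u+1})/\mu(E_u)\simeq 1$ with geometric constants. Assuming $h^Q_u\not\equiv0$, disjointness of $Q_u$ and $E_{u+1}$ gives $\Norm{h^Q_u}{L^p}^p=a_u^p\mu(Q_u)+b_u^p\mu(E_{u+1})$ for $p<\infty$ and $\Norm{h^Q_u}{L^\infty}=\max(a_u,b_u)$. Substituting the formulas for $a_u,b_u$ and applying Lemma~\ref{lem:ordering} yields $a_u^p\mu(Q_u)=\mu(Q_u)^{1-p/2}\bigl(\mu(E_{u+1})/\mu(E_u)\bigr)^{p/2}\simeq\mu(Q_u)^{1-p/2}$ and $b_u^p\mu(E_{u+1})\simeq\mu(Q_u)^{p/2}\mu(Q)^{1-p}$; the ratio of the second term to the first is $(\mu(Q_u)/\mu(Q))^{p-1}\leq 1$ for $p\geq1$, so the spike term dominates and $\Norm{h^Q_u}{L^p}\simeq\mu(Q_u)^{1/p-1/2}$. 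For $p=\infty$, $b_u/a_u=\mu(Q_u)/\mu(E_{u+1})\lesssim 1$, so $\max(a_u,b_u)\simeq a_u\simeq\mu(Q_u)^{-1/2}$. Then (viii) follows: exactly, $\Norm{h^Q_0}{L^1}\Norm{h^Q_0}{L^\infty}=\mu(Q)^{1/2}\mu(Q)^{-1/2}=1$, and for $1\leq u\leq M_Q-1$ the $p=1$ and $p=\infty$ cases of (vii) multiply to $\simeq\mu(Q_u)^{1/2}\mu(Q_u)^{-1/2}=1$.

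I expect the only real obstacle to be (vii): one must check not merely $\mu(E_u),\mu(E_{u+1})\leq\mu(Q)$ but that they are \emph{comparable} to $\mu(Q)$, and that the cancellative piece $-b_u1_{E_{u+1}}$ never overwhelms the spike $a_u1_{Q_u}$ in $L^p$-norm. Both points are precisely what the special indexing of Lemma~\ref{lem:indexing} (equivalently, ordering the children from smallest to largest measure) was engineered to guarantee, via Lemma~\ref{lem:ordering}; without it the normalization $\Norm{h^Q_u}{L^p}\simeq\mu(Q_u)^{1/p-1/2}$ would fail. Everything else is either immediate from~\eqref{def:HaarFunc} or already established in the martingale computations of Section~\ref{sec:martingale}.
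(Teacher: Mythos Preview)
Your proposal is correct and follows essentially the same route as the paper: properties (i)--(v) are read off the explicit formula, (vii) is the main computation driven by Lemma~\ref{lem:ordering} (your ``spike term dominates'' estimate is exactly the paper's comparison of the two summands), and (viii) follows from (vii). The only minor deviation is in (vi): the paper uses a pure dimension count ($\dim V(Q)=M_Q$ and there are $M_Q$ orthogonal functions), whereas you invoke the martingale identity $\E_Qf+\Db_Qf=\sum_u\langle f,h^Q_u\rangle h^Q_u$ from~\eqref{eq:projections} to see the span directly; both arguments are short and equivalent.
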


Note that Theorem~\ref{thm:HaarFuncProp2} completes the proof
of Theorem~\ref{prop:HaarFuncProp}.

\begin{proof}[Proof of Theorem~\ref{thm:HaarFuncProp2}]
Properties (i)--(v) are clear from the definition of the Haar
functions. For property~(vi), observe that $Q$ has $M_Q$
children, and $V(Q)$ is a finite-dimensional vector space with
dimension $\dim(V(Q)) = M_Q$. The functions $\{h^Q_u\}_{u =
0}^{M_Q - 1}$ are orthogonal, and there are $M_Q$ of them, so
they span~$V(Q)$.

Property~(vii): Take $h^Q_u$ with $u\in\{1, 2, \ldots, M_Q -
1\}$. For $1 \leq p < \infty$, we have
\begin{align*}
    \|h^Q_u\|_{L^p(X,\mu)}^p
    &= \int_{Q_u} |h^Q_u(x)|^p \, d\mu(x) + \int_{E_{u + 1}} |h^Q_u(x)|^p \, d\mu(x) \\
    &= \int_{Q_u} (a_u)^p \, d\mu(x) + \int_{E_{u + 1}} (b_u)^p \, d\mu(x) \\
    &= \underbrace{\left(\frac{\mu(E_{u + 1})}{\mu(E_u)}\right)^{p/2}\frac{\mu(Q_u)}{\mu(Q_u)^{p/2}}}_{\textup{(I)}}
        + \underbrace{\frac{\mu(Q_u)^{p/2}}{\mu(E_u)^{p/2} \mu(E_{u + 1})^{p/2} } \, \mu(E_{u + 1})}_{\textup{(II)}}.
\end{align*}
By Lemma~\ref{lem:ordering},
\[
    \textup{(I)}
    \simeq \mu(Q_u)^{1 - p/2}
\]
and
\[
    \textup{(II)}
    \simeq \mu(Q_u)^{p/2} \, \frac{\mu(E_{u + 1})}{\mu(E_{u + 1})^p}
    \simeq \frac{\mu(Q_u)^{p/2}}{\mu(Q)^{p - 1}}
    \lesssim \frac{\mu(Q_u)^{p/2}}{\mu(Q_u)^{p - 1}}
    = \mu(Q_u)^{1 - p/2}.
\]
Since also $\textup{(II)} \geq 0$, it follows that
\[
    \mu(Q_u)^{1 - p/2}
    \lesssim \textup{(I)}
    \leq \textup{(I)} + \textup{(II)}
    = \|h^Q_u\|_{L^p(X,\mu)}^p
    \lesssim \mu(Q_u)^{1 - p/2},
\]
and so
\[
    \|h^Q_u\|_{L^p(X,\mu)}
    \simeq \mu(Q_u)^{\frac{1}{p} - \frac{1}{2}}
\]
as required.

Now suppose $p = \infty$. Note that by
Lemma~\ref{lem:ordering}, $a_u \simeq \mu(Q_u)^{-1/2}$ and
\[
    b_u
    \simeq \frac{\mu(Q_u)^{1/2}}{\mu(E_u)}
    \simeq \frac{\mu(Q_u)^{1/2}}{\mu(Q)}
    \leq \frac{\mu(Q_u)^{1/2}}{\mu(Q_u)}
    = \mu(Q_u)^{-1/2},
\]
as $0 \leq \mu(Q_u) \leq \mu(Q)$. Therefore, for all $x$ we
have
\[
    |h^Q_u(x)|
    \leq a_u 1_{Q_u}(x) + b_u 1_{E_{u + 1}}(x)
    \lesssim \mu(Q_u)^{-1/2}.
\]
It follows that $\|h^Q_u\|_{L^\infty(X,\mu)} \lesssim
\mu(Q_u)^{-1/2}$. Also, if $h^Q_u\not\equiv 0$, then for $x\in
Q_u$ we have $|h^Q_u(x)| = a_u \simeq \mu(Q_u)^{-1/2}$, while
for $x\in E_{u + 1}$ we have $|h^Q_u(x)| = b_u \lesssim
\mu(Q_u)^{-1/2}$. Then $\|h^Q_u\|_{L^\infty(X,\mu)} \gtrsim
\mu(Q_u)^{-1/2}$. Thus for $h^Q_u\not\equiv 0$ we find that
\[
    \|h\|_{L^\infty(X,\mu)}
    \sim
    \mu(Q_u)^{-1/2},
\]
as required.

As an aside, we note that although $|h^Q_u(x)| \lesssim
\mu(Q_u)^{-1/2}$ for all $x$, the reverse inequality need not
hold, since if the measure $\mu$ is not doubling, for $x\in
E_{u + 1}$ the quantity $|h^Q_u(x)| = b_u \sim
\mu(Q_u)^{1/2}/\mu(Q)$ may be arbitrarily small compared with
$\mu(Q_u)^{-1/2}$.

Property (viii): For $u = 0$, it is immediate from the
definition that $\|h^Q_0\|_{L^1(X,\mu)}
\|h^Q_0\|_{L^\infty(X,\mu)} = \mu(Q)^{1/2} \mu(Q)^{-1/2} = 1$.
For $u\in\{1, \ldots, M_Q - 1\}$, by property~(vii) we have
\begin{align*}
    \|h^Q_u\|_{L^1(X,\mu)} \|h^Q_u\|_{L^\infty(X,\mu)}
    &\simeq \mu(Q_u)^{1 - 1/2} \mu(Q_u)^{-1/2}
    = 1,
\end{align*}
as required.
\end{proof}

We collect the convergence results of this section in the
following theorem.
\begin{thm}\label{thm:convergence2}
    Let $(X,\rho)$ be a geometrically doubling quasi-metric
    space and suppose $\mu$ is a positive Borel measure on $X$
    with the property that $\mu(B) < \infty$ for all balls
    $B\subseteq X$. For $1 < p < \infty$, for each $f\in
    L^p(X,\mu)$, and for each $m\in\Z$, we have
    \[
        f(x)
        = \sum_{k\geq m}\sum_{Q\in\mathscr{D}_k}\sum_{u=1}^{M_Q-1}
            \langle f,h^Q_u\rangle h^Q_u(x)
            + \sum_{Q\in\mathscr{D}_m}
            \langle f,h^Q_0\rangle h^Q_0(x)
    \]
    where the first sum converges both in the $L^p(X,\mu)$-norm
    and pointwise $\mu$-a.e. For the second sum we have
    \begin{equation*}
        \sum_{Q\in\mathscr{D}_m}
        \langle f,h^Q_0\rangle h^Q_0(x) \to\left\{
        \begin{array}{ll}
            0,                              & \text{ as $m\to -\infty$ if $\mu(X)=\infty$};\\
            \frac{1}{\mu(X)}\int_{X}fd\mu , & \text{ as $m\to -\infty$ if $\mu(X)<\infty$}.
        \end{array}\right.
    \end{equation*}
    As a consequence,
    \[
        f(x)
        = m_X(f) + \sum_{Q\in\mathscr{D}}\sum_{u=1}^{M_Q-1}
            \langle f,h^Q_u\rangle h^Q_u(x).
    \]
\end{thm}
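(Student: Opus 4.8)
The plan is to read off Theorem~\ref{thm:convergence2} from the martingale representation~\eqref{eq:representationf} together with the Haar representations~\eqref{eq:projections} of the projections $\E_Q$ and $\Db_Q$, and then to analyse the behaviour of the coarse term $\E_m f$ as $m\to-\infty$. \emph{Step 1 (the identity at a fixed scale).} Fix $m\in\Z$, $1<p<\infty$, and $f\in L^p(X,\mu)$. The Martingale Convergence Theorem gives $\E_N f\to f$ in $L^p(X,\mu)$ and $\mu$-a.e.\ as $N\to\infty$, hence~\eqref{eq:representationf}: $f=\sum_{k\ge m}\Db_k f+\E_m f$, with this series converging in $L^p$ and pointwise a.e. For each fixed $k$ the cubes of $\mathscr{D}_k$ are pairwise disjoint and cover $X$ and $\Db_Q f$ is supported on $Q$, so $\sum_{Q\in\mathscr{D}_k}\Db_Q f$ reduces at every point to the single term $\Db_{Q^k(x)}f(x)$, and $\Norm{\sum_{Q\in\mathscr{D}_k}\Db_Q f}{L^p(X,\mu)}^p=\sum_{Q\in\mathscr{D}_k}\Norm{\Db_Q f}{L^p(X,\mu)}^p=\Norm{\Db_k f}{L^p(X,\mu)}^p<\infty$; thus $\Db_k f=\sum_{Q\in\mathscr{D}_k}\Db_Q f$ with no convergence subtlety. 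Substituting $\Db_Q f=\sum_{u=1}^{M_Q-1}\langle f,h^Q_u\rangle h^Q_u$ and $\E_Q f=\langle f,h^Q_0\rangle h^Q_0$ from~\eqref{eq:projections} (terms with $\mu(Q)=0$ being zero) and using $\E_m f=\sum_{Q\in\mathscr{D}_m}\E_Q f=\sum_{Q\in\mathscr{D}_m}\langle f,h^Q_0\rangle h^Q_0$ yields the first displayed identity of the theorem together with the claimed $L^p$ and a.e.\ convergence of the iterated sum.

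\emph{Step 2 (the limit as $m\to-\infty$).} This is the substantive part. The key geometric input is the distinguished-center-point property~\eqref{eq:fixedpoint}, which through Lemma~\ref{lem:grows_to_X} gives that $Q^m(x)$ increases to $X$ as $m\to-\infty$ for every $x\in X$; hence $\mu(Q^m(x))\to\mu(X)$ and $\E_m f(x)=\langle f\rangle_{Q^m(x)}$ for all sufficiently small $m$ (so that $\mu(Q^m(x))>0$). For the pointwise limit: if $\mu(X)=\infty$, H\"older's inequality gives $|\langle f\rangle_{Q^m(x)}|\le\Norm{f}{L^p(X,\mu)}\,\mu(Q^m(x))^{-1/p}\to0$; if $\mu(X)<\infty$, dominated convergence ($|f|\cdot 1_{Q^m(x)}\le|f|\in L^1(X,\mu)$) gives $\int_{Q^m(x)}f\,d\mu\to\int_X f\,d\mu$ while $\mu(Q^m(x))\to\mu(X)$, so $\langle f\rangle_{Q^m(x)}\to\tfrac1{\mu(X)}\int_X f\,d\mu$; in both cases $\E_m f\to m_X(f)$ $\mu$-a.e. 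For the $L^p$ limit we argue by density: since $X=\bigcup_m B(x_0,c_1\delta^m)$ is an increasing union of balls of finite measure, functions with bounded support --- hence support contained in some single dyadic cube $Q_0\in\mathscr{D}_{m_0}$ --- are dense in $L^p(X,\mu)$, and each $\E_m$ is an $L^p$-contraction. So it suffices to let $m\to-\infty$ for such a $g$: by the nesting property~\eqref{eq:nested}, for $m\le m_0$ exactly one cube $Q_\ast\in\mathscr{D}_m$ contains $Q_0$, so $\E_m g=\langle g\rangle_{Q_\ast}1_{Q_\ast}$ and $\Norm{\E_m g}{L^p(X,\mu)}^p\le\mu(Q_\ast)^{1-p}\big(\int_X|g|\,d\mu\big)^p$, which tends to $0$ when $\mu(X)=\infty$ (as $\mu(Q_\ast)\to\infty$), whereas if $\mu(X)<\infty$ the boundedness of $g$ lets dominated convergence on the finite space $X$ give $\E_m g\to\tfrac1{\mu(X)}\int_X g\,d\mu$ in $L^p$. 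A standard $\varepsilon/3$ argument (using $\big|\tfrac1{\mu(X)}\int_X(f-g)\,d\mu\big|\le\mu(X)^{-1/p}\Norm{f-g}{L^p(X,\mu)}$ in the finite-measure case) then gives $\E_m f\to m_X(f)$ in $L^p(X,\mu)$.

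\emph{Step 3 (conclusion) and the main obstacle.} Letting $m\to-\infty$ in the identity of Step~1 and applying Step~2, the iterated sum $\sum_{k\ge m}\sum_{Q\in\mathscr{D}_k}\sum_{u=1}^{M_Q-1}\langle f,h^Q_u\rangle h^Q_u=f-\E_m f$ converges to $f-m_X(f)$ in $L^p$ and a.e., which is exactly the final assertion of Theorem~\ref{thm:convergence2}. (The \emph{unconditional} convergence stated in the more refined Theorem~\ref{thm:convergence} then follows from the standard $L^p$-boundedness, $1<p<\infty$, of martingale transforms, i.e.\ Burkholder's inequality, which makes $\{h^Q_u\}$ an unconditional basis of $L^p(X,\mu)$; a maximal estimate upgrades this to unconditional a.e.\ convergence.) Step~1 is routine bookkeeping and Step~3 is immediate once Step~2 is in hand, so the real work --- and the only genuinely delicate point --- is the $L^p$-convergence $\E_m f\to m_X(f)$ in the case $\mu(X)=\infty$: there is no dominated convergence theorem available on an infinite-measure space, so one must combine the $L^p$-contractivity of $\E_m$ with the density of boundedly supported functions and the quantitative fact $\mu(Q^m(x))\to\infty$ --- the latter being precisely where the distinguished-center-point normalisation of the dyadic system enters.
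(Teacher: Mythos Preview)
Your proof is correct and follows essentially the same approach as the paper: the first identity via the martingale representation~\eqref{eq:representationf} and~\eqref{eq:projections}, and then the H\"older estimate $|\langle f\rangle_{Q^m(x)}|\le\Norm{f}{L^p(\mu)}\,\mu(Q^m(x))^{-1/p}\to 0$ combined with Lemma~\ref{lem:grows_to_X} for the case $\mu(X)=\infty$. Your Step~2 density argument for the $L^p$-convergence of $\E_m f$ actually goes beyond what the paper writes out --- the paper's proof only records the pointwise limit and declares the finite-measure case ``clear'' --- so you have supplied more detail than the original rather than less.
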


Note that Theorem~\ref{thm:convergence2} completes the proof of
Theorem~\ref{thm:convergence}.

\begin{proof}[Proof of Theorem~\ref{thm:convergence2}]
The first equality follows directly from
\eqref{eq:representationf} and \eqref{eq:projections}. For the
second sum, fix $x\in X$ and recall that there exists a unique
$Q=Q^m\in\mathscr{D}_m$ such that $x\in Q$. Thus,
\[
    \sum_{Q\in\mathscr{D}_m} \langle f,h^Q_0\rangle h^Q_0(x)
    = \langle f,h^{Q^m}_0\rangle h^{Q^m}_0(x)
    = \frac{1_{Q^m}(x)}{\mu(Q^m)} \int_{Q^m}f \, d\mu.
\]
If $\mu(X)=\infty$, then
\[
    \left\vert \frac{1_{Q^m}(x)}{\mu(Q^m)}
        \int_{Q^m} f \, d\mu \right\vert
    \leq \frac{\Norm{f}{L^p(\mu)}}{\mu(Q^m)^{1/p}}\to 0
    \text{ as } m\to-\infty
\]
by Lemma~\ref{lem:grows_to_X}. For the case $\mu(X)<\infty$,
the asserted convergence is clear.
\end{proof}

\section{Definitions and duality of dyadic $H^1_d(\protect\widetilde{X})$
and dyadic $\bmo_d(\protect\widetilde{X})$}
\label{sec:functionclasses} \setcounter{equation}{0}

We begin this section by recalling the definitions of
continuous product $H^1(\wX)$ and $\bmo(\wX)$ developed
in~\cite{HLW}. Here the underlying space $\wX = X_1\times
\cdots \times X_n$ is a product space of homogeneous type. Then
we introduce the definitions of the dyadic product Hardy and
$\bmo$ spaces, $H^1_{d,d}(\wX)$ and $\bmo_{d,d}(\wX)$,
associated to a system of dyadic cubes on~$\wX$. (For
simplicity of notation, we work in the setting of two
parameters, but everything goes through to the $n$-parameter
setting, including the material we quote from~\cite{HLW,
HLPW}.) These dyadic spaces are defined by means of the Haar
functions constructed in Section~\ref{sec:Haarfunctions} above.
We note that Tao used the same approach to define dyadic
product~$H^1$ and $\bmo$ on Euclidean underlying spaces, in the
note~\cite{Tao}. Finally, we prove the duality relation
\[
    (H^1_{d,d}(\wX))'
    = \bmo_{d,d}(\wX).
\]
Here our approach is to prove the duality of the product
sequence spaces $s_1$ and $c_1$ which are models for
$H^1_{d,d}(\wX)$ and $\bmo_{d,d}(\wX)$, and to pull this result
across by means of the lifting and projection operators~$T_L$
and~$T_P$.


We recall from~\cite{HLW} the definitions of the Hardy space $
H^1( \widetilde{X} )$ and the bounded mean oscillation space $
\bmo( \widetilde{X} )$.
These definitions rely on the orthonormal basis and the wavelet
expansion in $L^2(X)$ which were recently constructed by
Auscher and Hyt\"onen~\cite{AuHyt}. To state their result, we
must first recall the set $\{x_\alpha^k\}$ of {\it reference
dyadic points} as follows. Let $\delta$ be a fixed small
positive parameter (for example, as noted in Section~2.2 of
\cite{AuHyt}, it suffices to take $\delta\leq 10^{-3}
A_0^{-10}$). For $k = 0$, let $\mathscr{X}^0 :=
\{x_\alpha^0\}_\alpha$ be a maximal collection of 1-separated
points in~$ X $. Inductively, for $k\in\mathbb{Z}_+$, let
$\mathscr{X}^k := \{x_\alpha^k\} \supseteq \mathscr{X}^{k-1}$
and $\mathscr{X}^{-k} := \{x_\alpha^{-k}\} \subseteq
\mathscr{X}^{-(k-1)}$ be $\delta^k$- and
$\delta^{-k}$-separated collections in $\mathscr{X}^{k-1}$ and
$\mathscr{X}^{-(k-1)}$, respectively.

Lemma~2.1 in \cite{AuHyt} shows that, for all $k\in\mathbb{Z}$
and $x\in X$, the reference dyadic points satisfy
\begin{eqnarray}\label{delta sparse}
    \rho(x_\alpha^k,x_\beta^k)\geq\delta^k\ (\alpha\not=\beta),\hskip1cm
        \rho(x,\mathscr{X}^k)
    = \min_\alpha \, \rho(x,x_\alpha^k)
    < 2A_0\delta^k.
\end{eqnarray}
Also, taking $c_0 := 1$, $C_0 := 2A_0$ and $\delta \leq 10^{-3}
A_0^{-10}$, we see that $c_0$, $C_0$ and $\delta$ satisfy the
conditions in Theorem~2.2 of \cite{HK}. Therefore we may apply
Hyt\"onen and Kairema's construction (Theorem~2.2, \cite{HK}),
with the reference dyadic points $\{x^k_\alpha\}_{k\in\Z,
\alpha\in\mathscr{X}^k}$ playing the role of the points
$\{z^k_\alpha\}_{k\in\Z, \alpha\in\mathscr{A}_k}$, to conclude
that there exists a set of half-open dyadic cubes
\[
    \{Q_\alpha^k\}_{k\in\mathbb{Z},\alpha\in\mathscr{X}^k}
\]
associated with the reference dyadic points
$\{x_\alpha^k\}_{k\in\mathbb{Z},\alpha\in\mathscr{X}^k}$. We
call the reference dyadic point~$x_\alpha^k$ the \emph{center}
of the dyadic cube~$Q_\alpha^k$. We also identify with
$\mathscr{X}^k$ the set of indices~$\alpha$ corresponding to
$x_\alpha^k \in \mathscr{X}^{k}$.

Note that $\mathscr{X}^{k}\subseteq \mathscr{X}^{k+1}$ for
$k\in\mathbb{Z}$, so that every $x_\alpha^k$ is also a point of
the form $x_\beta^{k+1}$. We denote
$\mathscr{Y}^{k}:=\mathscr{X}^{k+1}\backslash \mathscr{X}^{k}$,
and relabel the points $\{x_\alpha^k\}_\alpha$ that belong
to~$\mathscr{Y}^k$ as $\{y_\alpha^k\}_\alpha$.

We now recall the orthonormal wavelet basis of $L^2(X)$
constructed by Auscher and Hyt\"onen.

\begin{thm}[\cite{AuHyt} Theorem 7.1]\label{theorem AH orth basis}
    Let $( X,\rho,\mu)$ be a space of homogeneous type with
    quasi-triangle constant $A_0$, and let
    \begin{equation}\label{eqn:defn_of_a}
        a
        := (1+2\log_2A_0)^{-1}.
    \end{equation}
    There exists an orthonormal wavelet basis
    $\{\psi_\alpha^k\}$, $k\in\mathbb{Z}$, $y_\alpha^k\in
    \mathscr{Y}^k$, of $L^2(X)$, having exponential decay
    \begin{eqnarray}\label{exponential decay}
        |\psi_\alpha^k(x)|
        \leq {C\over \sqrt{\mu(B(y_\alpha^k,\delta^k))}}
            \exp\Big(-\nu\Big( {\rho(y^k_\alpha,x)\over\delta^k}\Big)^a\Big),
    \end{eqnarray}
    H\"older regularity
    \begin{eqnarray}\label{Holder-regularity}
        |\psi_\alpha^k(x)-\psi_\alpha^k(y)|
        \leq {C\over \sqrt{\mu(B(y_\alpha^k,\delta^k))}}
            \Big( {\rho(x,y)\over\delta^k}\Big)^\eta
            \exp\Big(-\nu\Big( {\rho(y^k_\alpha,x)\over\delta^k}\Big)^a\Big)
    \end{eqnarray}
    for $\rho(x,y)\leq \delta^k$, and the cancellation property
    \begin{eqnarray}\label{cancellation}
        \int_X \psi_\alpha^k(x)\,d\mu(x) = 0,
        \qquad \text{for $k\in\mathbb{Z}$, $\ y_\alpha^k\in\mathscr{Y}^k$}.
    \end{eqnarray}
    Here $\delta$ is a fixed small parameter, say $\delta \leq
    10^{-3} A_0^{-10}$, and $C < \infty$, $\nu > 0$ and
    $\eta\in(0,1]$ are constants independent of $k$, $\alpha$,
    $x$ and~$y_\alpha^k$.
\end{thm}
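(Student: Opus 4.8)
The plan is not to reprove this result: it is quoted verbatim from \cite[Theorem~7.1]{AuHyt}, and one invokes it as stated. Still, it is worth indicating the route Auscher and Hyt\"onen take and where the genuine difficulty lies. The basic tension to be resolved is that the dyadic cubes $Q^k_\alpha$ together with $\mu$ already provide an honest orthonormal Haar system whose elements are merely bounded (with jumps along cube boundaries), whereas a Coifman-type smooth approximation of the identity provides H\"older-continuous, exponentially localized ``wavelet projections'' whose ranges fail to be mutually orthogonal; one wants a single family enjoying \emph{both} exact orthonormality and regularity.

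First I would regularize the quasi-metric: by the Mac\'ias--Segovia theorem \cite{MS} there is a quasi-metric $d\simeq\rho$ such that $d^{\,a}$ satisfies the ordinary triangle inequality, with $a$ the exponent in \eqref{eqn:defn_of_a} --- which is exactly why the H\"older exponent of the final wavelets is governed by $A_0$ through \eqref{eqn:defn_of_a} --- and I would phrase all kernel estimates in terms of $d^{\,a}$, transferring them back to $\rho$ at the end since $d\simeq\rho$. Next, using the reference dyadic points $\{x^k_\alpha\}$ and balls of radius $\sim\delta^k$, I would build a partition of unity $\{s^k_\alpha\}$ that is Lipschitz in $d^{\,a}$ and of bounded overlap, and form a Coifman-type averaging operator $S_k$ normalized so that $S_k1=1$ and, after a symmetrization step, also $\int S_k(x,y)\,d\mu(x)=1$; taking bump profiles that decay like $\exp\!\big(-(d(x,y)/\delta^k)^{a}\big)$, which is $d^{\,a}$-Lipschitz, upgrades the spatial decay from polynomial to exponential. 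One then checks that the kernel obeys
\[
    |S_k(x,y)|
    \lesssim \frac{1}{\mu(B(x,\delta^k))}\,
        \exp\!\Big(-\nu\Big(\frac{d(x,y)}{\delta^k}\Big)^{a}\Big)
\]
together with a matching H\"older bound in each variable, that $\int S_k(x,y)\,d\mu(y)=1$, and that $S_kf\to f$ in $L^2(X)$ as $k\to+\infty$ while $S_kf\to m_X(f)$ as $k\to-\infty$.

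I would then set $D_k:=S_k-S_{k-1}$ (up to the index shift matching the labelling of $\mathscr{Y}^k$), so that $\sum_k D_k=I$ by telescoping. Each $D_k$ has a kernel with the exponential size bound, the H\"older regularity, and --- from the two one-sided normalizations of $S_k$ --- the double cancellation $\int D_k(x,y)\,d\mu(y)=\int D_k(x,y)\,d\mu(x)=0$. Since $\mathscr{X}^{k}\subseteq\mathscr{X}^{k+1}$ with difference $\mathscr{Y}^k$, the range of $D_k$ is spanned by a family $\{\psi^{k,\mathrm{raw}}_\alpha\}_{y^k_\alpha\in\mathscr{Y}^k}$, each localized at scale $\delta^k$ about the new point $y^k_\alpha$ and inheriting the bounds \eqref{exponential decay}, \eqref{Holder-regularity} and \eqref{cancellation}. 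A Cotlar--Stein / Schur-test estimate that uses precisely the cancellation and the exponential decay of the $D_k$ then shows that the frame operator $S:=\sum_{k,\alpha}\langle\,\cdot\,,\psi^{k,\mathrm{raw}}_\alpha\rangle\,\psi^{k,\mathrm{raw}}_\alpha$ satisfies $0<cI\le S\le CI$, so that, together with $\sum_kD_k=I$, the family $\{\psi^{k,\mathrm{raw}}_\alpha\}$ is a Riesz basis of $L^2(X)$ (of the orthogonal complement of the constants when $\mu(X)<\infty$).

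The hard part will be the final orthonormalization: I would put $\psi^k_\alpha:=S^{-1/2}\psi^{k,\mathrm{raw}}_\alpha$, which is automatically an orthonormal basis, and the real work is to show that it \emph{still} satisfies the size bound \eqref{exponential decay}, the regularity \eqref{Holder-regularity} and the cancellation \eqref{cancellation}. Cancellation is easy, since the raw wavelets, hence their closed span and its image under the positive operator $S^{-1/2}$, are orthogonal to constants. The size and regularity are the crux and require a \emph{Wiener-type lemma}: the class of operators whose kernels obey an exponential-decay size bound and a H\"older estimate relative to $d^{\,a}$ is inverse-closed in $\mathcal{B}(L^2(X))$, so $S^{-1}$, and then $S^{-1/2}$ (via a holomorphic functional calculus, or a norm-convergent series in $I-S/\|S\|$), again have kernels of this type; applying such a kernel to $\psi^{k,\mathrm{raw}}_\alpha$ reproduces \eqref{exponential decay} and \eqref{Holder-regularity} with possibly smaller constants $\nu$ and $\eta$. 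I expect this inverse-closedness of the kernel class --- rather than the construction of the smooth approximation of the identity, which is by now fairly standard --- to be the main obstacle.
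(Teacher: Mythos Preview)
You are right that the paper does not prove this theorem: it is quoted from \cite[Theorem~7.1]{AuHyt} and used as a black box, so your opening sentence already matches the paper's treatment exactly.

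Your extended sketch, while a coherent route to smooth orthonormal wavelets, is not quite the strategy Auscher and Hyt\"onen follow. Rather than producing a Riesz basis of ``raw'' wavelets and then orthonormalizing via $S^{-1/2}$ together with a Wiener-type inverse-closedness lemma, they construct H\"older-continuous spline functions associated to the reference points (using averaging over random dyadic systems to obtain regularity across cube boundaries), show these splines generate a genuine multiresolution analysis, and then obtain the $\psi^k_\alpha$ as an explicit orthogonal complement within the MRA; orthonormality is thus built in from the start and no operator-theoretic orthonormalization step is needed. Since the present paper gives no proof at all, this discrepancy is immaterial for the comparison you were asked to make, but if you intend the sketch as a summary of \cite{AuHyt} you should adjust it.
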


Moreover, the wavelet expansion is given by
\begin{equation*}\label{eqn:AH_reproducing formula}
    f(x)
    = \sum_{k\in\mathbb{Z}}\sum_{\alpha \in \mathscr{Y}^k}
        \langle f,\psi_{\alpha}^k \rangle \psi_{\alpha}^k(x) \\
    = \sum_{k\in\mathbb{Z}}\Delta_kf(x),
\end{equation*}
in the sense of $L^2(X)$, where
\[
    \Delta_kf
    := \sum_{\alpha \in \mathscr{Y}^k}
        \langle f,\psi_{\alpha}^k \rangle \psi_{\alpha}^k(x)
\]
is the orthogonal projection onto the subspace $W_k$ spanned by
$\{\psi_{\alpha}^k\}_{\alpha \in \mathscr{Y}^k}$.




In what follows, we refer to the functions $\psi_\alpha^k$ as
\emph{wavelets}. Throughout Sections~\ref{sec:functionclasses}
and~\ref{sec:multiparameter} of this paper, $a$ denotes the
exponent from~\eqref{eqn:defn_of_a} and $\eta$ denotes the
H\"older-regularity exponent from~\eqref{Holder-regularity}.

We now consider the product setting $(X_1,\rho_1,\mu_1)\times
(X_2,\rho_2,\mu_2)$, where $(X_i,\rho_i,\mu_i)$, $i = 1$,~2, is
a space of homogeneous type as defined in Section~1. For $i =
1$, 2, let $A_0^{(i)}$ be the constant in the quasi-triangle
inequality~\eqref{eqn:quasitriangleineq}, and let $C_{\mu_i}$
be the doubling constant as in inequality~\eqref{doubling
condition}.
On each $X_i$, by Theorem~\ref{theorem AH orth basis}, there is
a wavelet basis~$\{\psi^{k_i}_{\alpha_i}\}$, with H\"older
exponent~$\eta_i$ as in inequality~\eqref{Holder-regularity}.


We refer the reader to~\cite{HLW}, Definitions~3.9 and~3.10 and
the surrounding discussion, for the definitions of the
space~$\GG$ of product test functions and its dual space
$(\GG)'$ of product distributions on the product space
$X_1\times X_2$. In \cite{HLW}, $\GG$ is denoted by
$\GG(\beta_1,\beta_2;\gamma_1,\gamma_2)$ and $(\GG)'$ is
denoted by $\GG(\beta_1,\beta_2;\gamma_1,\gamma_2)'$, where the
$\beta_i$ and $\gamma_i$ are parameters that quantify the size
and smoothness of the test functions, and $\beta_i \in
(0,\eta_i)$ where $\eta_i$ is the regularity exponent from
Theorem~\ref{theorem AH orth basis}. (In fact, in~\cite{HLW}
the theory is developed for $\beta_i \in (0,\eta_i]$, but for
simplicity here we only use $\beta_i \in (0,\eta_i)$ since that
is all we need.) We note that the one-parameter scaled
Auscher--Hyt\"onen wavelets $\psi_\alpha^k(x)/
\sqrt{\mu(B(y_\alpha^k,\delta^k))}$ are test functions, and
that their tensor products $\psi^{k_1}_{\alpha_1}(x)
\psi^{k_2}_{\alpha_2}(y) \big(\mu_1(B(y^{k_1}_{\alpha_1},
\delta_1^{k_1})) \mu_2(B(y^{k_2}_{\alpha_2},
\delta_2^{k_2}))\big)^{-1/2}$ are product test functions
in~$\GG$, for all $\beta_i \in (0,\eta_i]$ and all $\gamma_i >
0$, for $i = 1$,~2.
These facts follow from the theory in~\cite{HLW}, specifically
Definition~3.1 and the discussion after it, Theorem~3.3, and
Definitions~3.9 and 3.10 and the discussion between them.

As shown in Theorem~3.11 of~\cite{HLW}, the reproducing formula
\begin{equation}\label{product reproducing formula}
   f(x_1,x_2)
   = \sum_{k_1}\sum_{\alpha_1}\sum_{k_2}\sum_{\alpha_2}
    \langle f,\psi_{\alpha_1}^{k_1}\psi_{\alpha_2}^{k_2} \rangle
    \psi_{\alpha_1}^{k_1}(x_1)\psi_{\alpha_2}^{k_2}(x_2)
\end{equation}
holds in $\GG$ for all $\beta_i$, $\gamma_i \in
(0,\eta_i)$ for $i = 1$,~2, and hence by duality \eqref{product
reproducing formula} also holds in~$(\GG)'$.



Next we define the product Hardy space~$H^1$. This is the
special case $p = 1$ of the definition of $H^p$ in
Definition~5.1 in~\cite{HLW}.

\begin{defn}[\cite{HLW}, Definition~5.1]\label{def-H1}
    The \emph{product Hardy space~$H^1$ on $\widetilde{X}$} is
    defined as
    \[
        H^1(\widetilde{X})
        := \big\lbrace f \in (\GG)': S(f)\in L^1(\widetilde{X})\big\rbrace,
    \]
    where $S(f)$ is the Littlewood--Paley $g$-function related to the
    orthonormal basis introduced in~\cite{AuHyt} as follows:
    \begin{eqnarray}\label{g function}
        S(f)(x_1,x_2)
        :=\Big\{ \sum_{k_1,\alpha_1}\sum_{k_2,\alpha_2} \Big|
            \langle \psi_{\alpha_1}^{k_1}\psi_{\alpha_2}^{k_2},f \rangle
            \widetilde{\chi}_{Q_{\alpha_1}^{k_1}}(x_1)
            \widetilde{\chi}_{Q_{\alpha_2}^{k_2}}(x_2) \Big|^2 \Big\}^{1/2},
    \end{eqnarray}
    with $\widetilde{\chi}_{Q_{\alpha_1}^{k_1}}(x_1) :=
    \chi_{Q_{\alpha_1}^{k_1}}(x_1)\mu_1(Q_{\alpha_1}^{k_1})^{-1/2}$
    and $\widetilde{\chi}_{Q_{\alpha_2}^{k_2}}(x_2) :=
    \chi_{Q_{\alpha_2}^{k_2}}(x_2)\mu_2(Q_{\alpha_2}^{k_2})^{-1/2}$.
\end{defn}

As noted in~\cite{HLPW}, it follows from
Definition~\ref{def-H1} that $H^1(\wX)\cap L^2(\wX)$ is dense
in $H^1(\wX)$ with respect to the $H^1(\wX)$ norm~$\|f\|_{H^1}
:= \|Sf\|_{L^1(\wX)}$.


We first give the definition of atoms for the product Hardy
space. This definition is the special case with $q = 2$, $p =
1$ of Definition~4.7 in~\cite{HLPW}.

\begin{defn}[\cite{HLPW}, Definition~4.7]\label{def-of-H1-atom}
    A function $a(x_1,x_2)$ defined on $\widetilde{X}$ is called an
    \emph{atom of}~$H^1(\widetilde{X})$ if $a(x_1,x_2)$ satisfies the
    following properties (1)--(3):
    \begin{itemize}
    \item[(1)] supp $a\subset\Omega$, where $\Omega$ is an open
        set of $\widetilde{X}$ with finite measure;
    \item[(2)] $\|a\|_{L^2}\leq \mu(\Omega)^{-1/2}$; and
    \item[(3)] $a$ can be further decomposed into
        \emph{rectangle atoms}~$a_R$ associated to dyadic
        rectangles $R=Q_1\times Q_2$, satisfying the
        following properties (i)--(iii):

    \smallskip
    (i) supp $a_R\subset \overline{C}R$, where
    $\overline{C}$ is an absolute constant independent of
    $a$ and $R$;

    \smallskip
    (ii) $\int_{ X_1 }a_R(x_1,x_2)\,d\mu_1(x_1)=0$ for a.e.
    $x_2\in  X_2 $ and $\int_{ X_2 }a_R(x_1,x_2)\,d\mu_2(x_2)=0$

    \hskip.75cm for a.e. $x_1\in  X_1 $;

    \smallskip
    (iii) $a=\sum\limits_{R\in m(\Omega)}a_R$ and $
    \sum\limits_{R\in m(\Omega)}\|a_R\|_{L^2}^2 \leq
    \mu(\Omega)^{-1}$.
    \end{itemize}
\end{defn}



The special case $p = 1$, $q = 2$ of the following result
from~\cite{HLPW} shows that the atomic Hardy space defined in
terms of the atoms from Definition~\ref{def-of-H1-atom}
coincides with the dense subset $H^1(\widetilde{X}) \cap
L^2(\widetilde{X})$ of~$H^1(\widetilde{X})$. This atomic
decomposition will be used in Section~\ref{sec:multiparameter}.


\begin{thm}[\cite{HLPW}, Theorem~4.8]\label{theorem-Hp atom decomp}
    Suppose that $\max\big(\frac{ {\omega}_1}{ {\omega}_1 +
    \eta_1 }, \frac{ {\omega}_2}{ {\omega}_2+ \eta_2
    }\big)<p\leq 1<q<\infty$. Then $f\in L^q( X_1\times X_2
    )\cap H^p( X_1\times X_2 )$ if and only if $f$ has an
    atomic decomposition; that is,
    \begin{eqnarray}\label{atom decom}
        f=\sum_{i=-\infty}^\infty\lambda_ia_i,
    \end{eqnarray}
    where $a_i$ are $(p, q)$ atoms, $\sum_i|\lambda_i|^p<\infty,$
    and  the series converges in both $H^p( X_1\times X_2 )$ and
    $L^q( X_1\times X_2 )$. Moreover,
    \begin{eqnarray*}
        \|f\|_{H^p( X_1\times X_2 )}
        \sim \inf \Big\{ \Big[\sum_i|\lambda_i |^p \Big]^{1/p} :
            f = \sum_{i} \lambda_i a_i\Big\},
    \end{eqnarray*}
    where the infimum is taken over all decompositions as above and
    the implicit constants are independent of the $L^q( X_1\times
    X_2 )$ and $H^p( X_1\times X_2 )$ norms of $f.$
\end{thm}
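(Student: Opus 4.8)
The plan is to prove both implications, following the Chang--R.~Fefferman scheme adapted to product spaces of homogeneous type via the Auscher--Hyt\"onen wavelets of Theorem~\ref{theorem AH orth basis}. For the \emph{sufficiency} direction, by subadditivity of $\|S(\cdot)\|_{L^p}^p$ when $p\le1$ and a standard density/limiting argument it suffices to prove the single uniform bound $\|a\|_{H^p(\wX)}\lesssim1$ for every atom~$a$ of Definition~\ref{def-of-H1-atom}, together with $\|a\|_{L^q(\wX)}\lesssim\mu(\Omega)^{1/q-1/p}$ for the $L^q$-convergence. I would obtain $\|a\|_{H^p}\lesssim1$ by writing $S(a)=S\big(\sum_{R\in m(\Omega)}a_R\big)$, expanding each rectangle atom $a_R$, $R=Q_1\times Q_2$, in the wavelets, and using the double cancellation in property~(ii) of Definition~\ref{def-of-H1-atom} together with the H\"older regularity~\eqref{Holder-regularity} and exponential decay~\eqref{exponential decay} of the wavelets to produce, for $x\notin\overline{C}R$, pointwise decay of the contribution of $a_R$ to $S(a)(x)$ governed by the product of the two one-parameter ``distance-to-$R$'' factors; the exponents $\eta_i$ enter here, and summability of these tails over $R$ against the level-set structure of $\Omega$, combined with Journ\'e's covering lemma on $\wX$ and the $\ell^2$-bound in property~(iii), is exactly what forces the hypothesis $\max\big(\tfrac{\omega_1}{\omega_1+\eta_1},\tfrac{\omega_2}{\omega_2+\eta_2}\big)<p$. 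The $L^q$ estimate for~$a$ then follows from the $L^2$ orthogonality of the wavelets, property~(2), and interpolation with the $H^p$ bound.

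For the \emph{necessity} direction, fix $f\in L^q(\wX)\cap H^p(\wX)$ and set $\Omega_j:=\{x\in\wX:S(f)(x)>2^j\}$, a set of finite $\mu$-measure by Chebyshev since $S(f)\in L^p$; passing to a comparable open set if necessary, one may assume $\Omega_j$ open. To each dyadic rectangle $R=Q_1\times Q_2$ attach the integer $j(R)$ with $\mu(R\cap\Omega_{j(R)})>\tfrac12\mu(R)\ge\mu(R\cap\Omega_{j(R)+1})$, and regroup the wavelet expansion~\eqref{product reproducing formula} of $f$ as $f=\sum_j\lambda_j a_j$, where $\lambda_j:=C\,2^j\mu(\widetilde\Omega_j)^{1/p}$, $\widetilde\Omega_j$ is the enlargement of $\Omega_j$ furnished by Journ\'e's covering lemma (the union of suitably dilated maximal dyadic subrectangles of $\Omega_j$, so that $\mu(\widetilde\Omega_j)\lesssim\mu(\Omega_j)$), and $a_j:=\lambda_j^{-1}\sum_{R:\,j(R)=j}\sum_{\alpha_1,\alpha_2}\langle f,\psi_{\alpha_1}^{k_1}\psi_{\alpha_2}^{k_2}\rangle\,\psi_{\alpha_1}^{k_1}\psi_{\alpha_2}^{k_2}$. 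Then $\sum_j|\lambda_j|^p\lesssim\sum_j2^{jp}\mu(\Omega_j)\lesssim\|S(f)\|_{L^p}^p\simeq\|f\|_{H^p}^p$, which gives the stated norm equivalence once the $a_j$ are shown to be atoms.

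It remains to verify that each $a_j$ is an atom. The $L^2$ bound is Plancherel: $\|a_j\|_{L^2}^2=\lambda_j^{-2}\sum_{R:\,j(R)=j}\sum_{\alpha_1,\alpha_2}|\langle f,\psi_{\alpha_1}^{k_1}\psi_{\alpha_2}^{k_2}\rangle|^2\lesssim\lambda_j^{-2}2^{2j}\mu(\Omega_j)$, since the rectangles $R$ with $j(R)=j$ are associated to the region where $S(f)\lesssim2^{j+1}$, and the choice of $\lambda_j$ turns this into $\|a_j\|_{L^2}\le\mu(\widetilde\Omega_j)^{-1/2}$. The rectangle pieces $a_{j,R}$ collect the wavelets attached to a single $R$, so the one-parameter cancellation~\eqref{cancellation} gives property~(ii) verbatim, and the $\ell^2$-sum bound in property~(iii) is again Plancherel on the disjointly indexed blocks. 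For the support condition one must replace ``$\operatorname{supp} a_{j,R}\subset\overline{C}R$'' by rapid decay of $a_{j,R}$ off $\overline{C}R$ (the wavelets are not compactly supported) and correspondingly absorb the tails into $\widetilde\Omega_j$, exactly as in~\cite{HLW}; this is the only place where the non-compactness of the Auscher--Hyt\"onen basis is felt.

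\textbf{Main obstacle.} The genuine difficulties are geometric and product-specific: (a) establishing and correctly applying Journ\'e's covering lemma on $\wX=X_1\times X_2$---a true product of spaces of homogeneous type rather than $\R^2$---so that $\mu(\widetilde\Omega_j)\simeq\mu(\Omega_j)$ uniformly; and (b) reconciling the non-compact supports of the wavelet tensor products with the compactly-supported rectangle-atom requirement, which forces one to carry decay estimates everywhere and to use the smoothness $\eta_i$ quantitatively. Once these are in place, the remaining steps---Plancherel bookkeeping, Chebyshev on level sets, and the $H^p$-subadditivity of $S$---are routine.
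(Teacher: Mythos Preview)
The paper does not actually prove this statement: Theorem~\ref{theorem-Hp atom decomp} is quoted from~\cite{HLPW} (listed in the bibliography as ``in preparation'') and is used here only as a black box, so there is no proof in the paper for me to compare your proposal against. Your outline is the expected Chang--Fefferman strategy transported to this setting, and it is broadly consistent with the dyadic analogue the paper \emph{does} prove (Theorem~\ref{thm atom for dyadic H1}), where the level-set/stopping-time construction, the Plancherel bookkeeping, and the verification of the atom conditions all appear in essentially the form you describe.

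That said, your sketch glosses over the two points you yourself flag as obstacles, and in this generality they are not minor. First, Journ\'e's covering lemma on a product of arbitrary spaces of homogeneous type is not available off the shelf; it requires the dyadic machinery of~\cite{HK} and a careful formulation of ``maximal dyadic subrectangles'' and their dilates, and the paper does not supply it---it is deferred to~\cite{HLPW}. Second, your handling of the support condition is too casual: the Auscher--Hyt\"onen wavelets have only exponential decay, so the rectangle pieces $a_{j,R}$ are genuinely not supported in $\overline{C}R$, and ``absorbing the tails into $\widetilde\Omega_j$'' is not a step but the heart of the matter---one must either redefine the atoms to allow controlled tails (and then redo the sufficiency direction accordingly) or perform a further splitting of each wavelet into a compactly supported piece plus a remainder whose $H^p$ contribution is summable. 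Neither of these is routine, and your proposal does not indicate which route you would take or why it closes.
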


We turn to the definition of the product $\bmo$ space.

\begin{defn}[\cite{HLW}, Definition~5.2]\label{def-BMO}
    We define the \emph{product $\bmo$ space on
    $\widetilde{X}$} as
    \[
        \bmo( \widetilde{X} )
        :=\big\{ f \in (\GG)': \mathcal{C}_1(f)\in L^\infty\},
    \]
    with $\mathcal{C}_1(f)$ defined as follows:
    \begin{eqnarray}\label{Carleson norm}
        \mathcal{C}_1(f)
        := \sup_{\Omega}\Big\{\frac{1}{\mu(\Omega)}
            \sum_{R=Q_{\alpha_1}^{k_1}\times Q_{\alpha_2}^{k_2}\subset \Omega}
            \big| \langle \psi_{\alpha_1}^{k_1}\psi_{\alpha_2}^{k_2},f \rangle
            \big|^2 \Big\}^{1/2},
    \end{eqnarray}
    where $\Omega$ runs over all open sets in $ \widetilde{X} $
    with finite measure, and $R$ runs over all dyadic rectangles
    contained in $\Omega$.
\end{defn}

We are now ready to define the dyadic versions of our function
spaces.
\begin{defn}\label{def-dyadic-H1}
    We define the \emph{dyadic product Hardy space $H^1$ on
    $\widetilde{X}$} as
    \[
        H_{d,d}^1(\widetilde{X})
        := \big\lbrace f \in L^1(X_1\times X_2):
            S_{d,d}(f)\in L^1(\widetilde{X})\big\rbrace,
    \]
    where $S_{d,d}(f)$ is the Littlewood-Paley $g$-function related
    to the Haar orthonormal basis built in the previous sections,
    as follows:
    \begin{align}\label{dyadic-g-function}
        S_{d,d}(f)(x_1,x_2)
        := \Big\{\sum_{Q_1\in\mathcal{D}_1} \sum_{Q_2\in\mathcal{D}_2}
            \sum_{u_1=1}^{M_{Q_1}-1}\sum_{u_2=1}^{M_{Q_2}-1}
            \big| \langle f, h_{Q_1}^{u_1}h_{Q_2}^{u_2}\rangle
            \widetilde{\chi}_{Q_1}(x_1)\widetilde{\chi}_{Q_2}(x_2)\big|^2 \Big\}^{1/2},
    \end{align}
    where $\widetilde{\chi}_{Q_1}(x_1) :=
    \chi_{Q_1}(x_1)\mu_1(Q_1)^{-1/2}$
    and $\widetilde{\chi}_{Q_2}(x_2) :=
    \chi_{Q_2}(x_2)\mu_2(Q_2)^{-1/2}$.
\end{defn}

\begin{defn}\label{def-dyadic-BMO}
    We define the \emph{dyadic product $\bmo$ space on $\widetilde{X}$} as
    \[
        \bmo_{d,d}( \widetilde{X} )
        := \big\{ f \in L^1_{\text{loc}} : \mathcal{C}_{d,d}(f)\in L^\infty\},
    \]
    with $\mathcal{C}_{d,d}(f)$ defined as follows:
    \begin{align}\label{dyadic-BMO norm}
        \mathcal{C}_{d,d}(f)
        := \sup_{\Omega}\bigg\{\frac{1}{\mu(\Omega)}
            \sum_{\substack{R=Q_{1}\times Q_{2}\in\mathcal{D}_1\times\mathcal{D}_2,\\ R\subset \Omega}}
            \sum_{u_1=1}^{M_{Q_1}-1}\sum_{u_2=1}^{M_{Q_2}-1}
            \big| \langle f, h_{Q_1}^{u_1}h_{Q_2}^{u_2}\rangle \big|^2
            \bigg\}^{1/2},
    \end{align}
    where $\Omega$ runs over all open sets in $ \widetilde{X} $
    with finite measure, and $R$ runs over all dyadic rectangles
    contained in~$\Omega$.
\end{defn}

\begin{remark}
    We note that our definitions of $H^1_{d,d}(\wX)$ and
    $\bmo_{d,d}(\wX)$ can be rephrased in terms of martingales,
    as follows. However, in the rest of this paper we work with
    the definitions above in terms of Haar functions.

    For $H^1_{d,d}(\wX)$: The dyadic square function $S_{d,d}(f)$ used
    in Definition~\ref{def-dyadic-H1} can be
    replaced by 
    \begin{eqnarray}\label{dyadic-g-function -via matingale}
        \widehat{S}_{d,d}(f)(x_1,x_2)
        := \Big\{ \sum_{k_1,\alpha_1}\sum_{k_2,\alpha_2}
            \Big|\Db_{Q_{\alpha_1}^{k_1}}\Db_{Q_{\alpha_2}^{k_2}}(f)(x_1,x_2)\Big|^2
            \Big\}^{1/2},
    \end{eqnarray}
    where $\Db_{Q_{\alpha_1}^{k_1}}
    \Db_{Q_{\alpha_2}^{k_2}}(f)(x_1,x_2)$ means we first apply
    the orthogonal projection $\Db_{Q_{\alpha_2}^{k_2}}$ to
    $f(x_1,\cdot )$ for a.e.~$x_1$, and then apply
    $\Db_{Q_{\alpha_1}^{k_1}}$ to the resulting function of~$x_1$.

    The space
    \[
        \widehat{H}_{d,d}^1(\widetilde{X})
        := \big\lbrace f \in L^1(X_1\times X_2):
            \widehat{S}_{d,d}(f)\in L^1(\widetilde{X})\big\rbrace
    \]
    coincides with $H_{d,d}^1(\widetilde{X})$, and moreover
    $\|S_{d,d}f\|_{L^1(X_1\times X_2)} \sim
    \|\widehat{S}_{d,d}f\|_{L^1(X_1\times X_2)}$; see
    \cite[Section~3.2.3]{Tre} and~\cite{FJ}. The advantage of
    using the square function $S_{d,d}$ is that only the
    absolute value of the individual coefficients appears in
    the definition, and there is no further cancellation
    involved. Also notice that using the local martingale
    differences and expectations we can write $S_{d,d}$ as
    follows:
    \begin{eqnarray}\label{eqn:Square function -via martingale}
        {S}_{d,d}(f)(x_1,x_2)
        = \Big\{ \sum_{k_1,\alpha_1}\sum_{k_2,\alpha_2}
            \E_{Q_{\alpha_1}^{k_1}}\E_{Q_{\alpha_2}^{k_2}}
            \Big|\Db_{Q_{\alpha_1}^{k_1}}\Db_{Q_{\alpha_2}^{k_2}}(f)(x_1,x_2)\Big|^2
            \Big\}^{1/2}.
    \end{eqnarray}


    For $\bmo_{d,d}(\wX)$: The quantity $\mathcal{C}_{d,d}(f)$
    used in Definition~\ref{def-dyadic-BMO} can be written as
    follows:
    \begin{align}\label{dyadic-BMO norm via martingale}
       \mathcal{C}^{d,d}_1(f)
       = \sup_{\Omega}\Big\{{1\over\mu(\Omega)}
            \sum_{\substack{R = Q_{\alpha_1}^{k_1}\times
            Q_{\alpha_2}^{k_2}\in\mathcal{D}_1\times\mathcal{D}_2,\\ R\subset \Omega}}
            \int_{X_1\times X_2}\big|
            \Db_{Q_{\alpha_1}^{k_1}}\Db_{Q_{\alpha_2}^{k_2}}(f)(x_1,x_2)
            \big|^2d\mu_1(x_1)d\mu_2(x_2) \Big\}^{1/2}.
    \end{align}
    where $\Omega$ runs over all open sets in $ \widetilde{X} $
    with finite measure, and $R$ runs over all dyadic
    rectangles contained in $\Omega$. This time we get equality
    because we are integrating the square of the local
    martingale differences (orthogonal projection onto the
    subspace generated by the Haar functions associated to the
    rectangle $R=Q_{\alpha_1}^{k_1}\times Q_{\alpha_2}^{k_2}$)
    and we can use Plancherel to replace the square of the
    $L^2$-norm by the sum of the squares of the Haar
    coefficients.

    For each dyadic cube~$Q$, the space
    \[
        W(Q)
        := \lspan\{f(x) \colon \supp f\subset Q, \text{$f|_R$ is constant
            for each}~R\in \text{ch}(Q),~\text{and}~\smallint f = 0\}
    \]
    of all functions supported on~$Q$ that are constant on each
    child of~$Q$ and have mean zero is a finite-dimensional
    vector space, with dimension $\dim(W(Q)) = M_Q - 1$. It is
    a subspace of the space $V(Q)$ defined in the proof of
    Theorem~\ref{thm:HaarFuncProp2} above. There are many
    possible bases of Haar functions for $W(Q)$, in addition to
    the basis we chose above. However, it follows from the
    remarks in the three preceding paragraphs that the
    definitions of $H^1_{d,d}(\widetilde{X})$ and
    $\bmo_{d,d}(\widetilde{X})$ are independent of the Haar
    functions chosen on each finite-dimensional subspace
    $W(Q_{\alpha_i}^{k_i})$, since in the definitions we are using the
    orthogonal projections $\Db_{Q_{\alpha_1}^{k_1}}
    \Db_{Q_{\alpha_2}^{k_2}}(f)$.
%
%
\end{remark}

\m

We now establish the duality $\big(H^1_{d,d}(\wX)\big)' =
\bmo_{d,d}(\wX)$. Following the approach used in~\cite{HLL} for
the continuous case, we do so by passing to the corresponding
duality result for certain sequence spaces $s^1$ and~$c^1$,
which are models for $H^1_{d,d}(\wX)$ and $\bmo_{d,d}(\wX)$
respectively. These sequence spaces are part of the homogeneous
Triebel--Lizorkin family of sequence spaces adapted to the
product setting. In the notation used in \cite{FJ}
and~\cite{Tre}, $s^1 = \dot{f}_1^{0,2}$ and $c^1 =
\dot{f}^{0,2}_{\infty}$.

We begin with the definitions and properties of these sequence
spaces.

We define $s^1$ to be the sequence space consisting of the
sequences $s = \{s_{Q_1,u_1,Q_2,u_2}\}$ of complex numbers with
\begin{align}\label{e4.1}
    \|s\|_{s^1}
    :=\Big\|\Big\{\sum_{Q_1\in\mathcal{D}_1} \sum_{Q_2\in\mathcal{D}_2}
        \sum_{u_1=1}^{M_{Q_1}-1}\sum_{u_2=1}^{M_{Q_2}-1}
        \big| s_{Q_1,u_1,Q_2,u_2} \widetilde{\chi}_{Q_1}(x_1)
        \widetilde{\chi}_{Q_2}(x_2)\big|^2
        \Big\}^{1/2}\Big\|_{L^1(X_1\times X_2)}
    < \infty.
\end{align}

We define $c^1$ to be the sequence space consisting of the
sequences $t = \{t_{Q_1,u_1,Q_2,u_2}\}$ of complex numbers with
\begin{eqnarray}\label{e4.2}
    \|t\|_{c^1}
    :=\sup_{\Omega}\Big(\omegareverse
        \sum_{R=Q_{1}\times Q_{2}\in\mathcal{D}_1\times\mathcal{D}_2,\ R\subset \Omega}
        \sum_{u_1=1}^{M_{Q_1}-1}\sum_{u_2=1}^{M_{Q_2}-1}
        |t_{Q_1,u_1,Q_2,u_2}|^2
        \Big)^{1/2}<\infty,
\end{eqnarray}
where the supremum is taken over all open sets~$\Omega$
in~$\wX$ with finite measure.

The main result for these sequence spaces is their duality.

\begin{thm}\label{prop-dual-of-sequence-space}
    The following duality result holds: $\big(s^1\big)^{'} = c^1$.
\end{thm}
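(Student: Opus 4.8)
The plan is to realize the duality through the bilinear pairing $\langle s,t\rangle := \sum_\nu s_\nu\,\overline{t_\nu}$, where $\nu = (Q_1,u_1,Q_2,u_2)$ runs over all indices and $R_\nu := Q_1\times Q_2$ is the underlying dyadic rectangle; to a sequence $s$ we attach the tile functions $s_\nu\,\widetilde{\chi}_{Q_1}\widetilde{\chi}_{Q_2}$, so that $S_{d,d}(s)^2 = \sum_\nu |s_\nu\,\widetilde{\chi}_{Q_1}\widetilde{\chi}_{Q_2}|^2$ pointwise, while $\|S_{d,d}(s)\|_{L^2(\wX)}^2 = \sum_\nu |s_\nu|^2$ by Tonelli and the normalization $\|\widetilde{\chi}_{Q_1}\widetilde{\chi}_{Q_2}\|_{L^2(\wX)} = 1$. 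One then proves the two inclusions $(s^1)'\subseteq c^1$ and $c^1\subseteq (s^1)'$ and checks that the pairing induces a Banach-space isomorphism; for the latter one uses that finite sequences are dense in $s^1$, which holds since for the truncation $s_N$ of $s$ to its first $N$ tiles one has $S_{d,d}(s - s_N)\downarrow 0$ pointwise with $S_{d,d}(s-s_N)\le S_{d,d}(s)\in L^1(\wX)$, whence $\|s - s_N\|_{s^1}\to 0$ by dominated convergence. The inclusion $(s^1)'\subseteq c^1$ is the easy one: given $\ell\in (s^1)'$, density gives $\ell(s) = \sum_\nu s_\nu\,\overline{t_\nu}$ with $t_\nu := \overline{\ell(e_\nu)}$; fixing an open set $\Omega\subseteq\wX$ with $\mu(\Omega)<\infty$ and a finite sequence $s$ supported on $\{\nu: R_\nu\subseteq\Omega\}$, one has $\supp S_{d,d}(s)\subseteq\Omega$, so by Cauchy--Schwarz and the $L^2$-identity above, $\|s\|_{s^1} = \|S_{d,d}(s)\|_{L^1(\Omega)}\le\mu(\Omega)^{1/2}\big(\sum_\nu|s_\nu|^2\big)^{1/2}$; hence $\big|\sum_{R_\nu\subseteq\Omega} s_\nu\,\overline{t_\nu}\big| = |\ell(s)|\le\|\ell\|\,\mu(\Omega)^{1/2}\big(\sum_\nu|s_\nu|^2\big)^{1/2}$, and taking the supremum over such $s$ with $\sum_\nu|s_\nu|^2\le 1$ and then over $\Omega$ gives $\|t\|_{c^1}\le\|\ell\|$.

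The substantial direction is $c^1\subseteq (s^1)'$, for which the plan is the Chang--Fefferman stopping-time argument in its discrete product form, following~\cite{HLL} (and~\cite{FJ} for the one-parameter template). Fix $t\in c^1$ and a finite sequence $s$; it suffices to bound $\sum_\nu|s_\nu|\,|t_\nu|$ by $C\,\|s\|_{s^1}\,\|t\|_{c^1}$. For $k\in\Z$ choose an open set $\Omega_k$, of measure $\lesssim 2^{-k}\|S_{d,d}(s)\|_{L^1}$, that contains every rectangle on which $S_{d,d}(s)>2^k$ (the level set $\{S_{d,d}(s)>2^k\}$, which up to a null set may be arranged to be open using the smallness-of-boundary property of the Proposition following Theorem~\ref{thm:existence2}), and let $\widetilde\Omega_k := \{x: M_s(\chi_{\Omega_k})(x)>\epsilon_0\}$ be its strong-maximal enlargement, for a small geometric constant $\epsilon_0$. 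Since the strong maximal operator $M_s$ is bounded on $L^2(\wX)$ --- being dominated by the iterated one-parameter Hardy--Littlewood maximal operators, each bounded on $L^2$ because $\mu_1,\mu_2$ are doubling --- Chebyshev's inequality gives $\mu(\widetilde\Omega_k)\lesssim\|M_s\chi_{\Omega_k}\|_{L^2}^2\lesssim\mu(\Omega_k)$. To each tile $\nu$ with $s_\nu\ne 0$ assign the largest integer $k(\nu)$ with $R_\nu\subseteq\widetilde\Omega_{k(\nu)}$; this is well defined because $R_\nu\subseteq\Omega_k\subseteq\widetilde\Omega_k$ as soon as $2^k$ is smaller than the constant value that the single tile $\nu$ contributes to $S_{d,d}(s)$ on $R_\nu$, while $\mu(\widetilde\Omega_k)\to 0$ as $k\to+\infty$ forces $R_\nu\not\subseteq\widetilde\Omega_k$ for $k$ large.

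Writing $\mathcal{A}_k := \{\nu: k(\nu)=k\}$ and applying Cauchy--Schwarz within each $\mathcal{A}_k$,
\[
    \sum_\nu|s_\nu|\,|t_\nu|
    = \sum_{k}\sum_{\nu\in\mathcal{A}_k}|s_\nu|\,|t_\nu|
    \le \sum_{k}\Big(\sum_{\nu\in\mathcal{A}_k}|s_\nu|^2\Big)^{1/2}\Big(\sum_{\nu\in\mathcal{A}_k}|t_\nu|^2\Big)^{1/2}.
\]
For the $t$-factor, $\mathcal{A}_k\subseteq\{\nu: R_\nu\subseteq\widetilde\Omega_k\}$, so applying the definition of the $c^1$ norm to the open finite-measure set $\widetilde\Omega_k$ gives $\sum_{\nu\in\mathcal{A}_k}|t_\nu|^2\le\|t\|_{c^1}^2\,\mu(\widetilde\Omega_k)$. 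For the $s$-factor, each $\nu\in\mathcal{A}_k$ has $R_\nu\not\subseteq\widetilde\Omega_{k+1}$, so there is $x_0\in R_\nu$ with $M_s(\chi_{\Omega_{k+1}})(x_0)\le\epsilon_0$; since $R_\nu$ is contained in a product of balls of comparable measure (by~\eqref{eq:contain} and doubling), the average of $\chi_{\Omega_{k+1}}$ over $R_\nu$ is $\le\tfrac12$, i.e. $\mu(R_\nu\cap\Omega_{k+1}^c)\ge\tfrac12\mu(R_\nu)$, so (the tile function being constant on $R_\nu$) $|s_\nu|^2\le 2\int_{R_\nu\cap\Omega_{k+1}^c}|s_\nu\,\widetilde{\chi}_{Q_1}\widetilde{\chi}_{Q_2}|^2\,d\mu$. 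Summing over $\mathcal{A}_k$, using $\sum_{\nu\in\mathcal{A}_k}|s_\nu\,\widetilde{\chi}_{Q_1}\widetilde{\chi}_{Q_2}|^2\le S_{d,d}(s)^2$, that $S_{d,d}(s)\le 2^{k+1}$ off $\Omega_{k+1}$, and $\bigcup_{\nu\in\mathcal{A}_k}R_\nu\subseteq\widetilde\Omega_k$,
\[
    \sum_{\nu\in\mathcal{A}_k}|s_\nu|^2
    \le 2\int_{\widetilde\Omega_k\setminus\Omega_{k+1}}S_{d,d}(s)^2\,d\mu
    \le 2\cdot 4^{k+1}\,\mu(\widetilde\Omega_k).
\]
Combining the two factors, $\sum_\nu|s_\nu|\,|t_\nu|\lesssim\|t\|_{c^1}\sum_k 2^k\,\mu(\widetilde\Omega_k)\lesssim\|t\|_{c^1}\sum_k 2^k\,\mu(\Omega_k)\lesssim\|t\|_{c^1}\,\|S_{d,d}(s)\|_{L^1(\wX)} = \|t\|_{c^1}\,\|s\|_{s^1}$, the last comparison being the layer-cake estimate $\sum_k 2^k\mu(\{g>2^k\})\lesssim\|g\|_{L^1}$. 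This yields $\|\langle\cdot,t\rangle\|_{(s^1)'}\lesssim\|t\|_{c^1}$, completing the proof.

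The step I expect to be the main obstacle is this stopping-time decomposition. In one parameter one stops at the maximal dyadic cubes contained in $\{S_{d,d}(s)>2^k\}$, but in the product setting there is no maximal dyadic rectangle --- a rectangle may have most of its mass inside a level set without being contained in it --- so that naive stopping rule is unavailable. The remedy is to pass to the strong-maximal enlargement $\widetilde\Omega_k$, whose measure remains comparable to $\mu(\Omega_k)$ precisely because $M_s$ is bounded on $L^2$ (it is \emph{not} of weak type $(1,1)$), and to exploit that a rectangle failing to lie in $\widetilde\Omega_{k+1}$ must put at least half of its mass outside $\Omega_{k+1}$. The other ingredients --- density of finite sequences, arranging the level sets to be open via the null-boundary property, the layer-cake inequality, and the $L^2$-boundedness of $M_s$ through iterated one-parameter maximal operators --- are routine.
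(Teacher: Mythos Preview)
Your proof is correct and follows essentially the same Chang--Fefferman stopping-time argument as the paper: both decompose the tiles according to the level sets $\Omega_k=\{S_{d,d}(s)>2^k\}$ and their strong-maximal enlargements $\widetilde\Omega_k$, apply Cauchy--Schwarz within each level, bound the $t$-factor via the $c^1$ condition on~$\widetilde\Omega_k$, and bound the $s$-factor using $S_{d,d}(s)\le 2^{k+1}$ off $\Omega_{k+1}$ together with $\mu(\widetilde\Omega_k)\lesssim\mu(\Omega_k)$. Your treatment of the converse direction $(s^1)'\subseteq c^1$ is slightly more direct than the paper's (which routes the same Cauchy--Schwarz estimate through an auxiliary weighted $\ell^2(\bar\mu)$ duality), but the content is identical.
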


\begin{proof}
First, we prove that for each $t\in c^1$, if
\begin{align}\label{innner product of the sequence}
    L(s)
    =\langle s,t\rangle:=\sum_{Q_1\in\mathcal{D}_1} \sum_{Q_2\in\mathcal{D}_2}
        \sum_{u_1=1}^{M_{Q_1}-1} \sum_{u_2=1}^{M_{Q_2}-1}
        s_{Q_1,u_1,Q_2,u_2}\cdot {\overline t_{Q_1,u_1,Q_2,u_2}},\
        \textup{for all } s \in s^1,
\end{align}
then
\begin{eqnarray}\label{L(s) bounded by s norm and t norm}
|L(s)|\leq C \|s\|_{s^1}\|t\|_{c^1}.
\end{eqnarray}

To see this, define for each $k\in\Z$ the following subsets
$\Omega_k$ and $\widetilde{\Omega}_k$ of $X_1\times X_2$ and
the subcollection~$B_k$ of certain rectangles in
$\mathcal{D}_1\times \mathcal{D}_2$:
\begin{eqnarray*}
    \Omega_k
    :=\Bigg\{(x_1,x_2)\in X_1\times X_2\ :
        \Big\{\sum_{Q_1\in\mathcal{D}_1} \sum_{Q_2\in\mathcal{D}_2}
        \sum_{u_1=1}^{M_{Q_1}-1}\sum_{u_2=1}^{M_{Q_2}-1}
        \big| s_{Q_1,u_1,Q_2,u_2}
        \widetilde{\chi}_{Q_1}(x_1)\widetilde{\chi}_{Q_2}(x_2)\big|^2
        \Big\}^{1/2}>2^k
        \Bigg\},
\end{eqnarray*}
\begin{eqnarray*}
    B_k
    :=\Big\{R=Q_1\times Q_2 \in \mathcal{D}_1\times \mathcal{D}_2:\
        \mu(\Omega_k\cap R)>{1\over 2}\mu (R),\
        \mu(\Omega_{k+1}\cap R)
    \leq\frac{\displaystyle1}{\displaystyle2}\mu(R)
    \Big\},\
\end{eqnarray*}
and
\begin{eqnarray*}
    \widetilde{\Omega}_k
    :=\Big\{(x_1,x_2)\in X_1\times X_2\ : M_s(\chi_{\Omega_k})>
    \frac{\displaystyle1}{\displaystyle2}\Big\},
\end{eqnarray*}
where $M_s$ is the strong maximal function on $ X_1\times X_2
$.

\begin{remark}\label{remark:duality seq spaces}
    First notice that since $s\in s^1$ we have $|\Omega_k | <
    \infty$. Second, each rectangle $R$ belongs to exactly one
    set $B_k$. Third, note that $|\widetilde{\Omega}_k| \leq
    C|\Omega_k|$ because the strong maximal function is bounded
    on~$L^2(\wX)$. Fourth, if $R\in B_k$ then $R\subset
    \widetilde{\Omega}_k$, therefore $\cup_{R\in B_k}R \subset
    \widetilde{\Omega}_k$ and $B_k\subset
    \{R : R\subset\widetilde{\Omega}_k\}$. Fifth,  if $R\in B_k$ then
    $\mu( (\widetilde{\Omega}_k\backslash \Omega_{k+1})\cap R)\geq
    \mu(R)/2$.
\end{remark}

By H\"{o}lder's inequality, the linear functional~$L(s)$
from~(\ref{innner product of the sequence}) satisfies
\begin{eqnarray}\label{estimate-of-Ls}
    |L(s)|
    &\leq& \sum_k \bigg(\sum_{R=Q_{1}\times Q_{2}\in\mathcal{D}_1\times\mathcal{D}_2,\ R\in B_k}
        \sum_{u_1=1}^{M_{Q_1}-1}\sum_{u_2=1}^{M_{Q_2}-1}
        |s_{Q_1,u_1,Q_2,u_2}|^2\bigg)^{1\over 2} \nonumber\\
    &&\times \bigg(\sum_{R=Q_{1}\times Q_{2}\in\mathcal{D}_1\times\mathcal{D}_2,\ R\in B_k}
        \sum_{u_1=1}^{M_{Q_1}-1}\sum_{u_2=1}^{M_{Q_2}-1}
        |t_{Q_1,u_1,Q_2,u_2}|^2\bigg)^{1\over 2}  \nonumber\\
    &\leq & \sum_k \mu (\widetilde{\Omega}_k)^{1\over 2}
        \bigg(\sum_{R=Q_{1}\times Q_{2}\in\mathcal{D}_1\times\mathcal{D}_2,\ R\in B_k}
        \sum_{u_1=1}^{M_{Q_1}-1}\sum_{u_2=1}^{M_{Q_2}-1}
        |s_{Q_1,u_1,Q_2,u_2}|^2\bigg)^{1\over 2}\\
    &&\times \bigg(\frac{\displaystyle1}{\displaystyle\mu(\widetilde{\Omega}_k)}
    \sum_{R=Q_{1}\times Q_{2}\in\mathcal{D}_1\times\mathcal{D}_2,\ R\in B_k}
        \sum_{u_1=1}^{M_{Q_1}-1}\sum_{u_2=1}^{M_{Q_2}-1}
        |t_{Q_1,u_1,Q_2,u_2}|^2\bigg)^{1\over 2}   \nonumber\\
    &\leq & \sum_k \mu (\widetilde{\Omega}_k)^{1\over 2}
        \bigg(\sum_{R=Q_{1}\times Q_{2}\in\mathcal{D}_1\times\mathcal{D}_2,\ R\in B_k}
        \sum_{u_1=1}^{M_{Q_1}-1}\sum_{u_2=1}^{M_{Q_2}-1}
        |s_{Q_1,u_1,Q_2,u_2}|^2\bigg)^{1\over 2}   \|t\|_{c^1}. \nonumber
\end{eqnarray}
In the last inequality we have used the fact that $B_k\subset
\{R:R\subset\widetilde{\Omega}_k\}$, as stated in
Remark~\ref{remark:duality seq spaces}.

Next we point out that from the definition of $\Omega_{k + 1}$
and Remark~\ref{remark:duality seq spaces}, we have
\begin{eqnarray*}
    &&\int_{\widetilde{\Omega}_k \backslash \Omega_{k+1}}
        \sum_{R=Q_{1}\times Q_{2}\in\mathcal{D}_1\times\mathcal{D}_2,\ R\in B_k}
        \sum_{u_1=1}^{M_{Q_1}-1}\sum_{u_2=1}^{M_{Q_2}-1}
        |s_{Q_1,u_1,Q_2,u_2}|^2
        {\chi_{Q_1}(x_1)\over\mu_1(Q_1)}{\chi_{Q_2}(x_2)\over\mu_2(Q_2)} d\mu_1(x_1)d\mu_2(x_2)\\
     &&\leq 2^{2(k+1)} \mu (\widetilde{\Omega}_k \backslash\Omega_{k+1})
     \leq C 2^{2k}\mu (\Omega_k).
\end{eqnarray*}
Moreover, by the Monotone Convergence Theorem and
Remark~\ref{remark:duality seq spaces}, we have
\begin{eqnarray*}
    &&\int_{\widetilde{\Omega}_k \backslash \Omega_{k+1}}
        \sum_{R=Q_{1}\times Q_{2}\in\mathcal{D}_1\times\mathcal{D}_2,\ R\in B_k}
        \sum_{u_1=1}^{M_{Q_1}-1}\sum_{u_2=1}^{M_{Q_2}-1}
        |s_{Q_1,u_1,Q_2,u_2}|^2 {\chi_{Q_1}(x_1)\over\mu_1(Q_1)}
        {\chi_{Q_2}(x_2)\over\mu_2(Q_2)} d\mu_1(x_1)d\mu_2(x_2) \\
    &&\geq \sum_{R=Q_{1}\times Q_{2}\in\mathcal{D}_1\times\mathcal{D}_2,\ R\in B_k}
        \sum_{u_1=1}^{M_{Q_1}-1}\sum_{u_2=1}^{M_{Q_2}-1}
        |s_{Q_1,u_1,Q_2,u_2}|^2  {1\over\mu_1(Q_1)}{1\over\mu_2(Q_2)}
        \mu(\widetilde{\Omega}_k \backslash \Omega_{k+1}\cap R\big) \\
    &&\geq \frac{\displaystyle1}{\displaystyle2}
        \sum_{R=Q_{1}\times Q_{2}\in\mathcal{D}_1\times\mathcal{D}_2,\ R\in B_k}
        \sum_{u_1=1}^{M_{Q_1}-1}\sum_{u_2=1}^{M_{Q_2}-1}
        |s_{Q_1,u_1,Q_2,u_2}|^2.
\end{eqnarray*}
As a consequence, we obtain that
\[
    \sum_{R=Q_{1}\times Q_{2}\in\mathcal{D}_1\times\mathcal{D}_2,\ R\in B_k}
        \sum_{u_1=1}^{M_{Q_1}-1}\sum_{u_2=1}^{M_{Q_2}-1}
        |s_{Q_1,u_1,Q_2,u_2}|^2
    \leq C 2^{2k}\mu (\Omega_k).
\]
Substituting this back into the last inequality of
(\ref{estimate-of-Ls}) yields that
\begin{eqnarray*}
    |L(s)|
    \leq  C\sum_k \mu (\widetilde{\Omega}_k)^{1\over 2}
        2^k\mu (\Omega_k)^{1\over2}
        \|t\|_{c^1}\leq C\sum_k   2^k \mu (\Omega_k)\|t\|_{c^1}
    \leq C\|s\|_{s^1}\|t\|_{c^1}.
\end{eqnarray*}

Conversely, we need to verify that for any $L\in
\big(s^1\big)^{'}$, there exists $t\in c^1$ with
$\|t\|_{c^1}\leq \|L\|$ such that for all $s\in s^1$,
$L(s)=\sum\limits_{Q_1,u_1,Q_2,u_2}
s_{Q_1,u_1,Q_2,u_2}\overline{t_{Q_1,u_1,Q_2,u_2}}$.

Now let $s^{Q_1,u_1,Q_2,u_2}$ be the sequence that equals 0
everywhere except at the $(Q_1,u_1,Q_2,u_2)$ entry where it
equals 1. Then it is easy to see that
$\|s^{Q_1,u_1,Q_2,u_2}\|_{s^1} = 1$. For all $s\in s^1$, we
have $s = \sum\limits_{Q_1,u_1,Q_2,u_2} s_{Q_1,u_1,Q_2,u_2}
s^{Q_1,u_1,Q_2,u_2}$, where the limit holds in the norm of
$s^1$. For each $L\in \big(s^1\big)^{'}$, let
$\overline{t_{Q_1,u_1,Q_2,u_2}} = L(s^{Q_1,u_1,Q_2,u_2})$.
Since $L$ is a bounded linear functional on $s^1$, we see that
\[
    L(s)
    = L\Big(\sum\limits_{Q_1,u_1,Q_2,u_2} s_{Q_1,u_1,Q_2,u_2}
        s^{Q_1,u_1,Q_2,u_2}\Big)
    = \sum\limits_{Q_1,u_1,Q_2,u_2} s_{Q_1,u_1,Q_2,u_2}
        \overline{t_{Q_1,u_1,Q_2,u_2}}.
\]
Let $t = \{t_{Q_1,u_1,Q_2,u_2}\}$. Then we only need to check
that $\|t\|_{c^1} \leq \|L\|$.


For each fixed open set $\Omega\subset X_1\times X_2 $ with
finite measure, let $\bar{\mu}$ be a new measure such that
$\bar{\mu}(R)=\frac{\displaystyle \mu(R)}{\displaystyle
\mu(\Omega)}$ when $ R\subset\Omega$, and $\bar{\mu}(R)=0$ when
$ R\nsubseteq\Omega$. And let $l^2(\bar{\mu})$ be a sequence
space such that when $s\in l^2(\bar{\mu})$,
\[
    \bigg(\sum_{R=Q_{1}\times Q_{2}\in\mathcal{D}_1\times\mathcal{D}_2,\ R\subset \Omega}
        \sum_{u_1=1}^{M_{Q_1}-1} \sum_{u_2=1}^{M_{Q_2}-1}
        |s_{Q_1,u_1,Q_2,u_2}|^2
        \frac{\displaystyle \mu(R)}{\displaystyle \mu(\Omega)}\bigg)^{1/2}
    < \infty.
\]
It is easy to see that $(l^2(\bar{\mu}))' = l^2(\bar{\mu})$.
Next,
\begin{align}\label{tQ}
    &\Big(\omegareverse \sum_{R=Q_{1}\times Q_{2}\in\mathcal{D}_1\times\mathcal{D}_2,\ R\subset \Omega}
        \sum_{u_1=1}^{M_{Q_1}-1}\sum_{u_2=1}^{M_{Q_2}-1}
        |t_{Q_1,u_1,Q_2,u_2}|^2 \Big)^{1/2}\\
    &=\Big\| \big\{\mu(R)^{-1/2}t_{Q_1,u_1,Q_2,u_2}\big\}\Big\|_{l^2(\bar{\mu})}\nonumber\\
    &=\sup_{s:\ \|s\|_{l^2(\bar{\mu})}\leq 1}
        \bigg|\sum_{R=Q_{1}\times Q_{2}\in\mathcal{D}_1\times\mathcal{D}_2,\ R\subset \Omega}
        \sum_{u_1=1}^{M_{Q_1}-1}\sum_{u_2=1}^{M_{Q_2}-1}
        \mu(R)^{-1/2}t_{Q_1,u_1,Q_2,u_2} \cdot s_{Q_1,u_1,Q_2,u_2} \cdot
        \frac{\displaystyle \mu(R)}{\displaystyle\mu(\Omega)}\bigg|\nonumber\\
    &=\sup_{s:\ \|s\|_{l^2(\bar{\mu})}\leq 1}
        \bigg|L\bigg(\Big\{s_{Q_1,u_1,Q_2,u_2}\,
        {\mu(R)^{1/2}\over\mu(\Omega)}\Big\}_{R=Q_{1}
        \times Q_{2}\in\mathcal{D}_1\times\mathcal{D}_2,\ R\subset \Omega,
        u_1=1,\ldots, M_{Q_1-1}, u_2=1,\ldots, M_{Q_2-1}}\bigg)\bigg|\nonumber\\
    &\leq\sup_{s:\ \|s\|_{l^2(\bar{\mu})}\leq 1}
        \big\|L\big\|\cdot\bigg\|\Big\{s_{Q_1,u_1,Q_2,u_2}\,
        {\mu(R)^{1/2}\over\mu(\Omega)}\Big\}_{R=Q_{1}\times Q_{2}\in\mathcal{D}_1\times\mathcal{D}_2,\
        R\subset \Omega, u_1=1,\ldots, M_{Q_1-1}, u_2=1,\ldots, M_{Q_2-1}}\bigg\|_{s^1}.\nonumber
\end{align}
By \eqref{e4.1} and the Cauchy--Schwarz inequality, we have
\begin{eqnarray*}
    &&\bigg\| \Big\{s_{Q_1,u_1,Q_2,u_2}\,
        {\mu(R)^{1/2}\over\mu(\Omega)}\Big\}_{R=Q_{1}\times Q_{2}\in\mathcal{D}_1\times\mathcal{D}_2,\
        R\subset \Omega, u_1=1,\ldots, M_{Q_1-1}, u_2=1,\ldots, M_{Q_2-1}}  \bigg\|_{s^1}\\
    && \leq \bigg(\sum_{R=Q_{1}\times Q_{2}\in\mathcal{D}_1\times\mathcal{D}_2,\ R\subset \Omega}
        \sum_{u_1=1}^{M_{Q_1}-1}\sum_{u_2=1}^{M_{Q_2}-1}
        |s_{Q_1,u_1,Q_2,u_2}|^2\frac{\displaystyle \mu(R)}{\displaystyle \mu(\Omega)}\bigg)^{1/2}.
\end{eqnarray*}
Hence, we obtain that (\ref{tQ}) is bounded by
\begin{eqnarray*}
    \sup_{s:\ \|s\|_{l^2(\bar{\mu})} \leq 1}
    \|L\|\cdot\|s\|_{l^2(\bar{\mu})} \leq \|L\|,
\end{eqnarray*}
which implies that
\[
    \|t\|_{c^1}\leq \|L\|.
    \qedhere
\]
\end{proof}

We now define $s^2$ (corresponding to $\dot{f}_2^{0,2}\sim
\ell^2$) to be the sequence space consisting of the sequences
$s = \{s_{Q_1,u_1,Q_2,u_2}\}$ of complex numbers with
\begin{align}\label{e4.3}
    \|s\|_{s^2}
    :=\Big\|\Big\{\sum_{Q_1\in\mathcal{D}_1} \sum_{Q_2\in\mathcal{D}_2}
        \sum_{u_1=1}^{M_{Q_1}-1}\sum_{u_2=1}^{M_{Q_2}-1}
        \big| s_{Q_1,u_1,Q_2,u_2} \widetilde{\chi}_{Q_1}(x_1)
        \widetilde{\chi}_{Q_2}(x_2)\big|^2
        \Big\}^{1/2}\Big\|_{L^2(X_1\times X_2)}<\infty.
\end{align}

Then we have the following density argument.

\begin{prop}\label{prop density-of-sequence-space}
    The set $s^1\cap s^2$ is dense in $s^1$ in terms of the
    $s^1$ norm. Moreover, $c^1\cap s^2$ is dense in $c^1$ in
    terms of the weak type convergence as follows: for each
    $t\in c^1$, there exists $\{t_n\}_n\subset c^1\cap s^2$
    such that $\langle s,t_n \rangle\rightarrow \langle
    s,t\rangle$ when $n$ tends to $\infty$, for all $s\in s^1$,
    where $\langle s,t\rangle$ is the inner product defined as
    in~\eqref{innner product of the sequence}.
\end{prop}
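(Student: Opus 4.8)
The plan is to prove both density statements by truncating sequences to finite support. First I would note that the index set $\mathcal I:=\{(Q_1,u_1,Q_2,u_2):Q_i\in\mathcal D_i,\ 1\le u_i\le M_{Q_i}-1,\ i=1,2\}$ is countable, since each $\mathcal D_i=\bigcup_{k\in\Z}\mathcal D_{i,k}$ is countable and each dyadic cube has only finitely many children; fix an enumeration $\mathcal I=\{\iota_1,\iota_2,\dots\}$ and, for a sequence $s=\{s_\iota\}_{\iota\in\mathcal I}$, let $s^{(n)}$ denote the truncation that agrees with $s$ on $\{\iota_1,\dots,\iota_n\}$ and vanishes elsewhere. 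Every $s^{(n)}$ has finite support, and since $\Norm{\chi_{Q_i}\mu_i(Q_i)^{-1/2}}{L^2(X_i,\mu_i)}=1$ (here $0<\mu_i(Q_i)<\infty$, as each cube lies in a ball of finite measure and cubes of measure zero carry no coefficient), the product structure gives $\Norm{s^{(n)}}{s^2}^2=\sum_{j=1}^n|s_{\iota_j}|^2<\infty$; thus $s^{(n)}\in s^2$ for every $n$.

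For the first assertion I would fix $s\in s^1$ and observe that the integrand defining $\Norm{s^{(n)}}{s^1}$ in~\eqref{e4.1} is pointwise dominated by the integrand for $s$, which is a fixed function $g\in L^1(X_1\times X_2)$ with $\Norm{g}{L^1}=\Norm{s}{s^1}$; hence $\Norm{s^{(n)}}{s^1}\le\Norm{s}{s^1}<\infty$, so $s^{(n)}\in s^1\cap s^2$. Then $\Norm{s-s^{(n)}}{s^1}$ equals the $L^1$-norm of the function
\[
    x\mapsto\Bigl(\sum_{j>n}|s_{\iota_j}|^2\,\mu_1(Q_1^{(j)})^{-1}\chi_{Q_1^{(j)}}(x_1)\,\mu_2(Q_2^{(j)})^{-1}\chi_{Q_2^{(j)}}(x_2)\Bigr)^{1/2},
\]
which decreases pointwise ($\mu$-a.e.) to $0$ as $n\to\infty$ while remaining bounded by the fixed majorant $g$; the dominated convergence theorem then yields $\Norm{s-s^{(n)}}{s^1}\to0$, so $s^1\cap s^2$ is dense in $s^1$.

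For the second assertion I would take $t\in c^1$ and use the same truncations $t^{(n)}\in s^2$. These also lie in $c^1$, because for each open set $\Omega$ of finite measure the Carleson sum in~\eqref{e4.2} for $t^{(n)}$ arises from that for $t$ by discarding non-negative terms, whence $\Norm{t^{(n)}}{c^1}\le\Norm{t}{c^1}<\infty$. The remaining point, which I expect to be the main obstacle, is to show $\langle s,t^{(n)}\rangle\to\langle s,t\rangle$ for every $s\in s^1$; for this I would first establish that the series $\langle s,t\rangle=\sum_{\iota}s_\iota\overline{t_\iota}$ converges absolutely. Although absolute convergence is not part of the statement of Theorem~\ref{prop-dual-of-sequence-space}, its proof (the chain of estimates culminating in~\eqref{L(s) bounded by s norm and t norm}) uses only the moduli $|s_\iota|$ and $|t_\iota|$; applying that argument to the sequences $|s|:=\{|s_\iota|\}$ and $|t|:=\{|t_\iota|\}$, which satisfy $\Norm{|s|}{s^1}=\Norm{s}{s^1}$ and $\Norm{|t|}{c^1}=\Norm{t}{c^1}$ (the $s^1$ and $c^1$ norms seeing only these moduli), gives $\sum_{\iota}|s_\iota|\,|t_\iota|\le C\Norm{s}{s^1}\Norm{t}{c^1}<\infty$. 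With absolute convergence in hand, $|\langle s,t\rangle-\langle s,t^{(n)}\rangle|\le\sum_{j>n}|s_{\iota_j}|\,|t_{\iota_j}|\to0$ as $n\to\infty$ for every fixed $s\in s^1$, so the single sequence $\{t^{(n)}\}_n$ witnesses the weak-type density of $c^1\cap s^2$ in $c^1$. Apart from this absolute-convergence observation, the argument reduces entirely to dominated convergence and the vanishing of the tail of a convergent series, so I anticipate no further difficulties.
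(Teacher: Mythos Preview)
Your proof is correct and follows essentially the same approach as the paper: truncate to finitely supported sequences, use dominated convergence for the $s^1$ density, and use the tail of the pairing series for the $c^1$ weak density. The only differences are cosmetic---the paper truncates by restricting to cubes at levels $k_i\in[-n,n]$ lying inside fixed balls $B(x_i^0,n)$ rather than via an arbitrary enumeration---and your argument for the $c^1$ part is in fact more careful than the paper's, which asserts the convergence $\langle s,t_n\rangle\to\langle s,t\rangle$ rather tersely without isolating the absolute convergence of $\sum_\iota s_\iota\overline{t_\iota}$ that you correctly extract from the proof of Theorem~\ref{prop-dual-of-sequence-space}.
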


\begin{proof}
For each sequence $s=\{s_{Q_1,u_1,Q_2,u_2}\}\in s^1$, we define
a truncated sequence $s_n$ as follows:
\begin{equation}\label{s n}
    s_n
    := \big\{ s^n_{Q_1,u_1,Q_2,u_2} \big\},
    \quad
    s^n_{Q_1,u_1,Q_2,u_2}
    := s_{Q_1,u_1,Q_2,u_2}\chi_{A_n}(Q_1,Q_2),
\end{equation}
where $A_n=\{(Q_1,Q_2): k_1,k_2\in[-n,n], Q_1 \in
\mathscr{D}_{k_1},  Q_1\subset B(x_1^0,n), Q_2 \in
\mathscr{D}_{k_2}, Q_2\subset B(x_2^0,n)  \}$ for each positive
integer $n$, where $x_1^0$ and $x_2^0$ are arbitrary fixed
points in $X_1$ and $X_2$, respectively.



Then $s_n$ is in $s^1$ with $\|s_n\|_{s^1}\leq \|s\|_{s^1}$. We
also have that $s\in s^2$ with
\[
    \|s_n\|_{s^2}
    = \Big(\sum_{ k_1,k_2\in[-n,n],
        Q_1 \in \mathscr{D}_{k_1}, Q_1\subset B(x_1^0,n),
        Q_2 \in \mathscr{D}_{k_2}, Q_2\subset B(x_2^0,n) }
        |s_{Q_1,u_1,Q_2,u_2}|^2\Big)^{1\over 2}
    < \infty.
\]
Moreover, it is easy to check that $s_n$ tends to $s$ in the
sense of $s^1$ norm. As a consequence, we have that $s^1\cap
s^2$ is dense in $s^1$ in terms of the $s^1$ norm.

\medskip
Now we turn to $c^1$. For each $t = \{t_{Q_1,u_1,Q_2,u_2}\}\in
c^1$, we define $t_n$ analogously to $s_n$. Then, similarly, we
have $t_n\in c^1\cap s^2$ with $\|t_n\|_{c^1}\leq \|t\|_{c^1}$
for each positive integer~$n$. Moreover, since $|\langle
s,t_n\rangle|$ and $|\langle s,t\rangle|$ are both bounded by $
C\|s\|_{s^1}\|t\|_{c^1}$ for all $s\in s^1$, we have
\begin{eqnarray*}
    \langle s,t_n\rangle
    &=& \sum_{ k_1,k_2\in[-n,n],
        Q_1 \in \mathscr{D}_{k_1}, Q_1\subset B(x_1^0,n),
        Q_2 \in \mathscr{D}_{k_2}, Q_2\subset B(x_2^0,n)  }
        \sum_{u_1=1}^{M_{Q_1}-1}\sum_{u_2=1}^{M_{Q_2}-1}
        s_{Q_1,u_1,Q_2,u_2}\,t_{Q_1,u_1,Q_2,u_2}\\
    &\rightarrow& \sum_{Q_1\in\mathcal{D}_1} \sum_{Q_2\in\mathcal{D}_2}
        \sum_{u_1=1}^{M_{Q_1}-1}\sum_{u_2=1}^{M_{Q_2}-1} s_Q\,t_Q
        = \langle s,t\rangle
\end{eqnarray*}
as $n\rightarrow +\infty$, which shows that $t_n$ tends to $t$
in the weak type convergence.
\end{proof}

Now we define the lifting and projection operators as follows.

\begin{defn}\label{def-of-lifting-operator-on-product-case}
    For functions $f\in L^2(X_1\times X_2)$, define the lifting
    operator $T_{L}$ by
    \begin{eqnarray}\label{lifting-operator}
        T_L(f)
        = \big\{ \langle f, h_{Q_1}^{u_1}h_{Q_2}^{u_2} \rangle
            \big\}_{Q_1,u_1,Q_2,u_2}.
    \end{eqnarray}
\end{defn}

\begin{defn}\label{def-of-projection-operator-on-product-case}
    For sequences $s = \{s_{Q_1,u_1,Q_2,u_2}\}$, define the
    projection operator $T_{P}$ by
    \begin{eqnarray}\label{projection-operator}
        T_P(s)
        = \sum_{Q_1\in\mathcal{D}_1} \sum_{Q_2\in\mathcal{D}_2}
            \sum_{u_1=1}^{M_{Q_1}-1} \sum_{u_2=1}^{M_{Q_2}-1}
            s_{Q_1,u_1,Q_2,u_2} h_{Q_1}^{u_1}(x_1)h_{Q_2}^{u_2}(x_2).
    \end{eqnarray}
\end{defn}

Then it is clear that
\[
    f
    = T_P\circ T_L(f)
\]
in the sense of $L^2(X_1\times X_2)$.

Next we give the following two auxiliary propositions, which
show that the lifting operator~$T_L$ maps $ H_{d,d}^1(X_1\times
X_2)\cap L^2(X_1\times X_2)$ to $s^1\cap s^2$, and maps $
\bmo_{d,d}(X_1\times X_2) \cap L^2(X_1\times X_2)$ to $c^1\cap
s^2$.

\begin{prop}\label{prop-of-Hp-to-sp}
    For all $f\in L^2(X_1\times X_2)\cap H^1_{d,d}(X_1\times
    X_2)$, we have
    \begin{eqnarray}\label{Hp to sp}
        \|T_L(f)\|_{s^1}
        \lesssim \|f\|_{ H^1_{d,d}(X_1\times X_2)}.
    \end{eqnarray}
\end{prop}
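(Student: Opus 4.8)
The plan is to recognise that \eqref{Hp to sp} is in fact an \emph{equality}, obtained by comparing term by term the definition~\eqref{e4.1} of the $s^1$ norm with the definition of the $H^1_{d,d}(\wX)$ norm; the only genuine point to check is that $T_L(f)$ is a well-defined scalar sequence when $f\in L^2(\wX)$.

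\textbf{Step 1: $T_L(f)$ is well defined.} For each $i$, the one-parameter Haar functions $h^{Q_i}_{u_i}$ lie in $L^2(X_i,\mu_i)$; indeed a direct computation, consistent with property~(vii) of Theorem~\ref{thm:HaarFuncProp2} at $p=2$, shows $\Norm{h^{Q_i}_{u_i}}{L^2(X_i,\mu_i)}=1$ whenever $h^{Q_i}_{u_i}\not\equiv 0$. Hence by Fubini the tensor products $h_{Q_1}^{u_1}(x_1)h_{Q_2}^{u_2}(x_2)$ belong to $L^2(X_1\times X_2)$, so for $f\in L^2(\wX)$ the Cauchy--Schwarz inequality makes every coefficient $\langle f,h_{Q_1}^{u_1}h_{Q_2}^{u_2}\rangle$ finite, and $T_L(f)=\big\{\langle f,h_{Q_1}^{u_1}h_{Q_2}^{u_2}\rangle\big\}$ is a genuine scalar sequence.

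\textbf{Step 2: Comparison of the norms.} Insert $s_{Q_1,u_1,Q_2,u_2}=\langle f,h_{Q_1}^{u_1}h_{Q_2}^{u_2}\rangle$ into~\eqref{e4.1}. The index set there, namely $\{(Q_1,u_1,Q_2,u_2):Q_i\in\mathcal{D}_i,\ 1\leq u_i\leq M_{Q_i}-1\}$, and the summand $|\langle f,h_{Q_1}^{u_1}h_{Q_2}^{u_2}\rangle\,\widetilde{\chi}_{Q_1}(x_1)\widetilde{\chi}_{Q_2}(x_2)|^2$, are \emph{exactly} those occurring inside the square of $S_{d,d}(f)(x_1,x_2)$ in~\eqref{dyadic-g-function}. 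Consequently the function whose $L^1(X_1\times X_2)$ norm defines $\|T_L(f)\|_{s^1}$ coincides pointwise with $S_{d,d}(f)$, and therefore
\[
    \|T_L(f)\|_{s^1}=\big\|S_{d,d}(f)\big\|_{L^1(X_1\times X_2)}=\|f\|_{H^1_{d,d}(\wX)},
\]
the last equality being the definition of the $H^1_{d,d}$ norm (Definition~\ref{def-dyadic-H1}). This establishes~\eqref{Hp to sp}, in fact with implicit constant~$1$.

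\textbf{On the difficulty.} There is essentially no obstacle: once Step~1 is in place the assertion is definitional. The one thing worth recording carefully is the orthonormality of the tensor Haar system $\{h_{Q_1}^{u_1}h_{Q_2}^{u_2}\}$ in $L^2(\wX)$ --- which follows from properties~(ii), (iv) and~(v) of Theorem~\ref{thm:HaarFuncProp2} together with the nesting of dyadic cubes --- since, combined with $\int\widetilde{\chi}_{Q_i}^2\,d\mu_i=1$, it yields the companion estimate $\|T_L(f)\|_{s^2}=\|S_{d,d}(f)\|_{L^2(\wX)}\leq\|f\|_{L^2(\wX)}$, which together with the present proposition places $T_L(f)$ in $s^1\cap s^2$, as needed for the duality argument to follow.
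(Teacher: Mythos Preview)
Your proof is correct and matches the paper's approach exactly: the paper simply asserts that this proposition (along with the three companion propositions) ``follows directly from the definitions'' of $H^1_{d,d}$ and $s^1$, and your argument makes precisely this identification explicit, observing that $\|T_L(f)\|_{s^1}=\|S_{d,d}(f)\|_{L^1}=\|f\|_{H^1_{d,d}}$ with constant~$1$. Your additional remarks on well-definedness and the $s^2$ bound are correct supplementary observations that the paper does not spell out.
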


\begin{prop}\label{prop-of-CMOp-to-cp}
    For all $f\in \bmo_{d,d}( X_1\times X_2)$, we have
    \begin{eqnarray}\label{CMOp to cp}
        \|T_L(f)\|_{c^1}
        \lesssim \mathcal{C}_1^d(f).
    \end{eqnarray}
\end{prop}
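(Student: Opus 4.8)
The plan is to recognize that this proposition, like its $H^1$ counterpart Proposition~\ref{prop-of-Hp-to-sp}, is in essence a restatement of the relevant definitions, and to verify the bookkeeping that makes the two sides literally agree. By Definition~\ref{def-of-lifting-operator-on-product-case}, the lifting operator sends $f$ to the sequence $T_L(f) = \{t_{Q_1,u_1,Q_2,u_2}\}$ with entries $t_{Q_1,u_1,Q_2,u_2} := \langle f, h_{Q_1}^{u_1} h_{Q_2}^{u_2}\rangle$, indexed over $Q_i \in \mathcal{D}_i$ and $u_i \in \{1,\ldots,M_{Q_i}-1\}$ for $i=1,2$ --- exactly the index set appearing in the definition~\eqref{e4.2} of the $c^1$ norm. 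So the first step is simply to substitute these entries into~\eqref{e4.2}.

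Before doing so, I would record the one point that genuinely needs checking: that each pairing $\langle f, h_{Q_1}^{u_1} h_{Q_2}^{u_2}\rangle$ is well-defined and finite when $f\in\bmo_{d,d}(\wX)\subset L^1_{\textup{loc}}(\wX)$. This is immediate from Theorem~\ref{thm:HaarFuncProp2}: each $h_{Q_i}^{u_i}$ is bounded, Borel-measurable and supported on $Q_i$, and $Q_i$ is contained in a ball by~\eqref{eq:contain}; hence the tensor product $h_{Q_1}^{u_1} h_{Q_2}^{u_2}$ is a bounded function supported in a product of two balls, which has finite product measure (balls have finite measure in a space of homogeneous type), so local integrability of $f$ gives absolute convergence of the defining integral.

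Carrying out the substitution then yields
\[
    \|T_L(f)\|_{c^1}
    = \sup_{\Omega}\Big(\omegareverse
        \sum_{\substack{R = Q_{1}\times Q_{2}\in\mathcal{D}_1\times\mathcal{D}_2,\\ R\subset\Omega}}
        \sum_{u_1=1}^{M_{Q_1}-1}\sum_{u_2=1}^{M_{Q_2}-1}
        \big|\langle f, h_{Q_1}^{u_1}h_{Q_2}^{u_2}\rangle\big|^2\Big)^{1/2},
\]
the supremum being over all open $\Omega\subset\wX$ of finite measure. Comparing with~\eqref{dyadic-BMO norm}, the right-hand side is exactly $\mathcal{C}_{d,d}(f) = \mathcal{C}_1^d(f)$: the sums range over the same families of dyadic rectangles $R\subset\Omega$ and of indices $u_1,u_2$, and the outer supremum is over the same admissible sets $\Omega$. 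This gives the identity $\|T_L(f)\|_{c^1} = \mathcal{C}_1^d(f)$, which in particular establishes~\eqref{CMOp to cp} with implicit constant~$1$.

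There is therefore no substantial obstacle here; the whole content is the identification of the two quantities by matching their definitions, together with the elementary well-definedness remark above. (The analogue for $H^1$, Proposition~\ref{prop-of-Hp-to-sp}, is likewise just the observation that $\|T_L(f)\|_{s^1}$ and $\|S_{d,d}(f)\|_{L^1(\wX)} = \|f\|_{H^1_{d,d}(\wX)}$ are given by the same formula; the extra $L^2$ hypothesis stated there serves the density argument needed to run the lifting/projection scheme on a dense subspace, not the norm bound.)
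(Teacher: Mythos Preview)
Your proposal is correct and matches the paper's approach exactly: the paper states that this proposition (together with its three companions) ``follow[s] directly from the definitions'' of $\bmo_{d,d}(X_1\times X_2)$ and~$c^1$, which is precisely the identification you carry out. Your added check that the pairings $\langle f, h_{Q_1}^{u_1}h_{Q_2}^{u_2}\rangle$ are well-defined for $f\in L^1_{\textup{loc}}$ is a useful explicit remark that the paper leaves implicit.
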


\begin{prop}\label{prop-of-sp-to-Hp}
    For all $s\in s^1\cap s^2$, we have
    \begin{eqnarray}\label{sp to Hp}
        \|T_P(s)\|_{ H^1_{d,d}(X_1\times X_2)}
        \lesssim \|s\|_{s^1}.
    \end{eqnarray}
\end{prop}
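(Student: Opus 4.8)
The plan is to show that applying the projection operator $T_P$ to a sequence $s \in s^1 \cap s^2$ produces a function in $H^1_{d,d}(\wX)$ whose norm is controlled by $\|s\|_{s^1}$, and the natural route is through the atomic decomposition for the dyadic product Hardy space. First I would note that, since $s \in s^2$, the function $f := T_P(s)$ belongs to $L^2(\wX)$, so $f$ lies in the dense class $H^1_{d,d}(\wX) \cap L^2(\wX)$ on which the atomic decomposition is available (this is the dyadic analogue of Theorem~\ref{theorem-Hp atom decomp}, which in the present paper is established as Theorem~\ref{thm atom for dyadic H1}). The strategy is then to build an atomic decomposition of $f$ directly from the sequence $s$, by a stopping-time / Calder\'on--Zygmund decomposition of the dyadic square function at the levels $2^k$.

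The key steps, in order, are as follows. First I would define, exactly as in the proof of Theorem~\ref{prop-dual-of-sequence-space}, the open sets
\[
    \Omega_k := \Big\{ x \in \wX : \Big(\sum_{Q_1,Q_2}\sum_{u_1,u_2}
        \big| s_{Q_1,u_1,Q_2,u_2}\widetilde{\chi}_{Q_1}(x_1)\widetilde{\chi}_{Q_2}(x_2)\big|^2\Big)^{1/2} > 2^k \Big\},
\]
which have finite measure because $s \in s^1$, together with the collections $B_k$ of dyadic rectangles $R = Q_1 \times Q_2$ with $\mu(\Omega_k \cap R) > \tfrac12\mu(R)$ but $\mu(\Omega_{k+1}\cap R) \le \tfrac12\mu(R)$, and the enlarged sets $\widetilde{\Omega}_k := \{M_s(\chi_{\Omega_k}) > \tfrac12\}$. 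Every rectangle with a nonzero coefficient lies in exactly one $B_k$, and $\mu(\widetilde{\Omega}_k) \lesssim \mu(\Omega_k)$ by the $L^2$-boundedness of the strong maximal function $M_s$ (Theorem~\ref{thm:maximal_function_homog}(ii)). Second, I would set
\[
    f = \sum_k \lambda_k a_k, \qquad
    \lambda_k := C\,2^k\,\mu(\Omega_k)^{1/2}, \qquad
    a_k := \lambda_k^{-1}\sum_{R \in B_k}\sum_{u_1,u_2} s_{Q_1,u_1,Q_2,u_2}\,h^{u_1}_{Q_1}(x_1)h^{u_2}_{Q_2}(x_2),
\]
and verify that each $a_k$ is (a constant multiple of) an $H^1(\wX)$-atom in the sense of Definition~\ref{def-of-H1-atom}, supported in $\widetilde{\Omega}_k$ (using that $\supp h^{u_i}_{Q_i} \subset Q_i$, that $R \in B_k$ forces $R \subset \widetilde{\Omega}_k$, and the cancellation property~(iv) of Theorem~\ref{thm:HaarFuncProp2} in each variable, which gives the rectangle-atom cancellation). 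Third, the size bound $\|a_k\|_{L^2} \lesssim \mu(\widetilde{\Omega}_k)^{-1/2} \lesssim \mu(\Omega_k)^{-1/2}$ — and likewise $\sum_{R}\|(a_k)_R\|_{L^2}^2 \lesssim \mu(\widetilde{\Omega}_k)^{-1}$ for the rectangle-atom pieces — follows from the computation already carried out in the proof of Theorem~\ref{prop-dual-of-sequence-space}: integrating $\sum_{R \in B_k}\sum_{u_1,u_2}|s_{Q_1,u_1,Q_2,u_2}|^2 \chi_{Q_1}\chi_{Q_2}/(\mu_1(Q_1)\mu_2(Q_2))$ over $\widetilde{\Omega}_k \setminus \Omega_{k+1}$ and using $\mu((\widetilde{\Omega}_k\setminus\Omega_{k+1})\cap R)\ge \mu(R)/2$ from below and the defining inequality for $\Omega_{k+1}$ from above yields $\sum_{R \in B_k}\sum_{u_1,u_2}|s_{Q_1,u_1,Q_2,u_2}|^2 \lesssim 2^{2k}\mu(\Omega_k)$, which is precisely the Plancherel-squared $L^2$ norm of the unnormalized $k$-th piece and hence, after dividing by $\mu(\widetilde{\Omega}_k)$ and taking square roots, gives the desired $\|a_k\|_{L^2}$ estimate. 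Finally, summing gives
\[
    \sum_k |\lambda_k| = C\sum_k 2^k\mu(\Omega_k)^{1/2}\cdot\mu(\Omega_k)^{1/2} = C\sum_k 2^k\mu(\Omega_k) \lesssim \|s\|_{s^1},
\]
where the last comparison is the standard layer-cake estimate $\sum_k 2^k\mu(\Omega_k) \simeq \int_{\wX} (\text{square function}) = \|s\|_{s^1}$; then Theorem~\ref{thm atom for dyadic H1} gives $\|f\|_{H^1_{d,d}(\wX)} \lesssim \sum_k|\lambda_k| \lesssim \|s\|_{s^1}$.

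The main obstacle I anticipate is the verification that each $a_k$ genuinely satisfies the rectangle-atom condition~(3) of Definition~\ref{def-of-H1-atom}, in particular choosing the correct ``open set'' $\Omega$ for the atom (it should be $\widetilde{\Omega}_k$, not $\Omega_k$, and one must check the maximal rectangles $m(\widetilde{\Omega}_k)$ organize the pieces correctly) and confirming the double $L^2$-summability $\sum_{R}\|(a_k)_R\|_{L^2}^2 \le \mu(\widetilde{\Omega}_k)^{-1}$ rather than merely the aggregate bound $\|a_k\|_{L^2} \le \mu(\widetilde{\Omega}_k)^{-1/2}$; these are different conditions in the product setting and the former requires the lower bound $\mu((\widetilde{\Omega}_k\setminus\Omega_{k+1})\cap R)\ge\mu(R)/2$ applied rectangle-by-rectangle. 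A secondary technical point is that a single dyadic rectangle $R$ may belong to $B_k$ but not be contained in $\widetilde{\Omega}_{k}$ as a \emph{maximal} sub-rectangle; one resolves this exactly as in the one-parameter case by noting $R\subset\widetilde{\Omega}_k$ suffices since $\widetilde{\Omega}_k$ is open and each $R\in B_k$ is then contained in some maximal dyadic sub-rectangle of $\widetilde{\Omega}_k$. Everything else is bookkeeping with the estimates already appearing in the proof of Theorem~\ref{prop-dual-of-sequence-space} and the norm equivalences in Theorem~\ref{thm:HaarFuncProp2}.
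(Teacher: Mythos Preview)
Your argument is correct but vastly more elaborate than what the paper does. The paper dispatches this proposition (together with the three companion propositions) in a single sentence: ``These four propositions follow directly from the definitions.'' The reason is that the Haar system $\{h^{u_1}_{Q_1}h^{u_2}_{Q_2}\}$ is orthonormal in $L^2(\wX)$, so for $s\in s^2$ the series $f:=T_P(s)$ converges in $L^2$ and one has $\langle f,h^{u_1}_{Q_1}h^{u_2}_{Q_2}\rangle = s_{Q_1,u_1,Q_2,u_2}$, i.e.\ $T_L\circ T_P(s)=s$. Plugging this into the definition~\eqref{dyadic-g-function} of $S_{d,d}$ gives
\[
S_{d,d}(f)(x_1,x_2)=\Big(\sum_{Q_1,Q_2}\sum_{u_1,u_2}\big|s_{Q_1,u_1,Q_2,u_2}\,\widetilde{\chi}_{Q_1}(x_1)\widetilde{\chi}_{Q_2}(x_2)\big|^2\Big)^{1/2},
\]
which is exactly the function whose $L^1$ norm is $\|s\|_{s^1}$ in~\eqref{e4.1}. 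Hence $\|T_P(s)\|_{H^1_{d,d}}=\|S_{d,d}(f)\|_{L^1}=\|s\|_{s^1}$, with equality rather than merely $\lesssim$.

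Your route via the atomic decomposition is not wrong---it is essentially the machinery of Theorem~\ref{thm atom for dyadic H1} applied to $T_P(s)$---but it reproves that theorem rather than invoking the one-line orthogonality argument. What you gain from your approach is an explicit atomic decomposition of $T_P(s)$ with controlled coefficients, which is interesting in its own right; what the paper's approach buys is that the proposition becomes a tautology once one recognises that the $H^1_{d,d}$ norm and the $s^1$ norm are defined by the \emph{same} square function, related through the lifting/projection pair.
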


\begin{prop}\label{prop-of-cp-to-CMOp}
    For all $t\in c^1 \cap s^2$, we have
    \begin{eqnarray}\label{cp to CMOp}
        \mathcal{C}_1^d(T_P(t))
        \lesssim \|t\|_{c^1} .
    \end{eqnarray}
\end{prop}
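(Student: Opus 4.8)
The plan is to read off the two-parameter Haar coefficients of $T_P(t)$ directly from the orthonormality of the Haar system, and then to observe that the Carleson-type quantity $\mathcal{C}_{d,d}$ applied to $T_P(t)$ is nothing but the $c^1$ norm of~$t$. Recall first that in each factor~$X_j$ the cancellative Haar functions $h_{Q_j}^{u_j}$ ($u_j = 1,\dots,M_{Q_j}-1$) are $L^2$-normalized --- a direct computation from the explicit formula in Theorem~\ref{thm:HaarFuncProp2} gives $\|h_{Q_j}^{u_j}\|_{L^2(X_j)} = 1$ unless $h_{Q_j}^{u_j}\equiv 0$ (which happens exactly when $\mu_j((Q_j)_{u_j}) = 0$) --- and mutually orthogonal: within a fixed cube this is Theorem~\ref{thm:HaarFuncProp2}(v), while for two distinct cubes one of the functions is constant on the support of the other, so orthogonality follows from the cancellation property in Theorem~\ref{thm:HaarFuncProp2}(iv). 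Since the $L^2(\wX)$ inner product of tensor products factors into a product of one-parameter inner products, the tensor system $\{h_{Q_1}^{u_1}(x_1)\,h_{Q_2}^{u_2}(x_2)\}$ is orthonormal in $L^2(\wX)$ (again with some elements identically~$0$). Note also that the non-cancellative functions $h_Q^0$ play no role here: they occur neither in the sum~\eqref{projection-operator} defining $T_P$ nor in the Carleson sum of Definition~\ref{def-dyadic-BMO}.

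First I would check that $T_P(t)$ is a bona fide element of $L^1_{\loc}(\wX)$. For $t \in c^1\cap s^2$, Tonelli's theorem applied to~\eqref{e4.3}, together with $\|\widetilde{\chi}_{Q_j}\|_{L^2(X_j)}^2 = 1$, gives $\sum_{Q_1,u_1,Q_2,u_2}|t_{Q_1,u_1,Q_2,u_2}|^2 = \|t\|_{s^2}^2 < \infty$; hence, by orthonormality, the series~\eqref{projection-operator} converges in $L^2(\wX)$ (indeed unconditionally), so $T_P(t)\in L^2(\wX)\subset L^1_{\loc}(\wX)$ and is eligible for membership in $\bmo_{d,d}(\wX)$. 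Next I would compute its Haar coefficients: pairing the $L^2$-convergent series against a fixed $h_{Q_1}^{u_1}h_{Q_2}^{u_2}$ and using orthonormality, only the matching term survives, and
\[
    \big\langle T_P(t),\, h_{Q_1}^{u_1}h_{Q_2}^{u_2}\big\rangle
    = t_{Q_1,u_1,Q_2,u_2}\,\|h_{Q_1}^{u_1}\|_{L^2(X_1)}^2\,\|h_{Q_2}^{u_2}\|_{L^2(X_2)}^2 ,
\]
which equals $t_{Q_1,u_1,Q_2,u_2}$ when $h_{Q_1}^{u_1}h_{Q_2}^{u_2}\not\equiv 0$ and equals $0$ otherwise; in either case $\big|\langle T_P(t),\,h_{Q_1}^{u_1}h_{Q_2}^{u_2}\rangle\big|\le|t_{Q_1,u_1,Q_2,u_2}|$.

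Substituting this last inequality into the definition of $\mathcal{C}_{d,d}$ in Definition~\ref{def-dyadic-BMO} and comparing with the definition~\eqref{e4.2} of the $c^1$ norm, we obtain, for every open set $\Omega\subset\wX$ of finite measure,
\begin{align*}
    &\frac{1}{\mu(\Omega)}\sum_{\substack{R = Q_1\times Q_2\in\mathcal{D}_1\times\mathcal{D}_2\\ R\subset\Omega}}\sum_{u_1=1}^{M_{Q_1}-1}\sum_{u_2=1}^{M_{Q_2}-1}\big|\langle T_P(t),\,h_{Q_1}^{u_1}h_{Q_2}^{u_2}\rangle\big|^2\\
    &\qquad\le\frac{1}{\mu(\Omega)}\sum_{\substack{R = Q_1\times Q_2\in\mathcal{D}_1\times\mathcal{D}_2\\ R\subset\Omega}}\sum_{u_1=1}^{M_{Q_1}-1}\sum_{u_2=1}^{M_{Q_2}-1}|t_{Q_1,u_1,Q_2,u_2}|^2\;\le\;\|t\|_{c^1}^2 ,
\end{align*}
and taking the supremum over such~$\Omega$ yields $\mathcal{C}_1^d(T_P(t)) = \mathcal{C}_{d,d}(T_P(t)) \le \|t\|_{c^1}$, which is precisely the claimed estimate~\eqref{cp to CMOp} (with implicit constant~$1$). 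I do not expect a genuine obstacle here: once the Haar basis of Section~\ref{sec:Haarfunctions} is available, the whole content is that $T_P$ essentially inverts the coefficient map on square-summable sequences, and the Carleson-type condition defining $\bmo_{d,d}$ transfers verbatim to the $c^1$ norm. The single point deserving care is the degenerate case $\mu_j((Q_j)_{u_j}) = 0$, in which the corresponding Haar coefficient vanishes even though the prescribed entry $t_{Q_1,u_1,Q_2,u_2}$ need not; this is why one gets an inequality rather than equality in~\eqref{cp to CMOp}, and it is harmless for the application. Finally I would note that this proposition, together with Propositions~\ref{prop-of-Hp-to-sp}, \ref{prop-of-CMOp-to-cp} and~\ref{prop-of-sp-to-Hp}, the density statement of Proposition~\ref{prop density-of-sequence-space}, the identity $f = T_P\circ T_L f$, and the sequence-space duality $(s^1)' = c^1$ of Theorem~\ref{prop-dual-of-sequence-space}, is what will be assembled to obtain the duality $\big(H^1_{d,d}(\wX)\big)' = \bmo_{d,d}(\wX)$.
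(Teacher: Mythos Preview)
Your proof is correct and follows the same approach the paper takes: the paper simply asserts that this proposition (together with the companion Propositions~\ref{prop-of-Hp-to-sp}--\ref{prop-of-sp-to-Hp}) ``follow[s] directly from the definitions'' of $\bmo_{d,d}(X_1\times X_2)$ and~$c^1$, and what you have written is precisely the unpacking of that sentence via orthonormality of the Haar system. Your care with the degenerate case $\mu_j((Q_j)_{u_j})=0$ and with verifying $T_P(t)\in L^2\subset L^1_{\loc}$ is appropriate and not explicitly spelled out in the paper.
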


These four propositions follow directly from the definitions of
the Hardy spaces $H^1_{d,d}(X_1\times X_2)$ and the sequence
space~$s^1$, and from the definitions of the $\bmo$ space
$\bmo_{d,d}(X_1\times X_2)$ and the sequence space $c^1$.

\begin{thm}\label{thm:duality_dyadic_H1_BMO}
    For each $\varphi\in \bmo_{d,d}(X_1\times X_2)$, the linear
    functional given by
    \begin{eqnarray}\label{dual pair}
        \ell(f)
        =\langle \varphi,f\rangle
        := \int \varphi(x_1,x_2)f(x_1,x_2) \, d\mu_1(x_1)d\mu_2(x_2),
    \end{eqnarray}
    initially defined on $H^1_{d,d}(X_1\times X_2)\cap
    L^2(X_1\times X_2)$, has a unique bounded extension to
    $H^1_{d,d}(X_1\times X_2)$ with
    \[
        \|\ell\|
        \leq C \mathcal{C}_1^d(\varphi).
    \]

    Conversely, every bounded linear functional $\ell$ on
    $H^1_{d,d}(X_1\times X_2)\cap L^2(X_1\times X_2)$ can be
    realized in the form of~\eqref{dual pair}, i.e., there exists $\varphi\in
    \bmo_{d,d}(X_1\times X_2)$ with such that \eqref{dual pair}
    holds for all $f\in H^1_{d,d}(X_1\times X_2)\cap L^2(X_1\times
    X_2)$, and
    \[
        \mathcal{C}_1^d(\varphi)
        \leq C\|\ell\|.
    \]
\end{thm}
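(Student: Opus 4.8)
The plan is to deduce the duality $\big(H^1_{d,d}(\wX)\big)'=\bmo_{d,d}(\wX)$ from the sequence-space duality $\big(s^1\big)'=c^1$ of Theorem~\ref{prop-dual-of-sequence-space} by transporting it through the lifting operator $T_L$ and the projection operator $T_P$. The tools I would invoke are: the mapping Propositions~\ref{prop-of-Hp-to-sp}, \ref{prop-of-CMOp-to-cp}, \ref{prop-of-sp-to-Hp} and~\ref{prop-of-cp-to-CMOp}; the identity $f=T_P\circ T_Lf$ (valid in $L^2(\wX)$); the density of $H^1_{d,d}(\wX)\cap L^2(\wX)$ in $H^1_{d,d}(\wX)$; and the density statements of Proposition~\ref{prop density-of-sequence-space}. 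As a preliminary observation, since each Haar function $h^{u_1}_{Q_1}h^{u_2}_{Q_2}$ is a bounded simple function supported on $Q_1\times Q_2$, the coefficient $\langle\varphi,h^{u_1}_{Q_1}h^{u_2}_{Q_2}\rangle$ is well defined for every $\varphi\in L^1_{\loc}$; thus for $f\in H^1_{d,d}(\wX)\cap L^2(\wX)$ I would read the pairing~\eqref{dual pair} as the sequence pairing $\langle T_Lf,T_L\varphi\rangle$ of~\eqref{innner product of the sequence}, the non-cancellative (constant) part of the Haar expansion contributing nothing because both $H^1_{d,d}$ and $\bmo_{d,d}$ are defined purely through the cancellative coefficients.

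\emph{First direction.} Given $\varphi\in\bmo_{d,d}(\wX)$, I would define $\ell(f):=\langle T_Lf,T_L\varphi\rangle$ for $f\in H^1_{d,d}(\wX)\cap L^2(\wX)$, the series converging absolutely; this agrees with~\eqref{dual pair} whenever the integral there converges absolutely (e.g.\ for $f$ with bounded support). By Proposition~\ref{prop-of-CMOp-to-cp}, $T_L\varphi\in c^1$ with $\|T_L\varphi\|_{c^1}\lesssim\mathcal{C}_1^d(\varphi)$, and by Proposition~\ref{prop-of-Hp-to-sp}, $T_Lf\in s^1$ with $\|T_Lf\|_{s^1}\lesssim\|f\|_{H^1_{d,d}(\wX)}$. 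The inequality~\eqref{L(s) bounded by s norm and t norm}, established in the first half of the proof of Theorem~\ref{prop-dual-of-sequence-space}, then yields $|\ell(f)|\le C\,\mathcal{C}_1^d(\varphi)\,\|f\|_{H^1_{d,d}(\wX)}$, and since $H^1_{d,d}(\wX)\cap L^2(\wX)$ is dense in $H^1_{d,d}(\wX)$ the functional extends uniquely with $\|\ell\|\le C\,\mathcal{C}_1^d(\varphi)$.

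\emph{Converse direction.} Given a bounded linear functional $\ell$ on $H^1_{d,d}(\wX)\cap L^2(\wX)$, I would form $L:=\ell\circ T_P$ on $s^1\cap s^2$. By Proposition~\ref{prop-of-sp-to-Hp} together with Plancherel (which gives $T_Ps\in L^2(\wX)$ when $s\in s^2$), $L$ is bounded on $s^1\cap s^2$ with $\|L\|\lesssim\|\ell\|$, and by density of $s^1\cap s^2$ in $s^1$ (Proposition~\ref{prop density-of-sequence-space}) it extends to $s^1$. Theorem~\ref{prop-dual-of-sequence-space} then supplies $t\in c^1$ with $\|t\|_{c^1}\le\|L\|\lesssim\|\ell\|$ and $L(s)=\langle s,t\rangle$ for all $s\in s^1$. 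I would like to set $\varphi:=T_P(t)$, but $t$ need not lie in $s^2$, so instead I pass to the truncations $\{t_n\}\subset c^1\cap s^2$ from Proposition~\ref{prop density-of-sequence-space}, which satisfy $\|t_n\|_{c^1}\le\|t\|_{c^1}$, converge to $t$ coefficientwise, and have $\langle s,t_n\rangle\to\langle s,t\rangle$ for all $s\in s^1$. Put $\varphi_n:=T_P(t_n)\in\bmo_{d,d}(\wX)\cap L^2(\wX)$; then $\mathcal{C}_1^d(\varphi_n)\lesssim\|t_n\|_{c^1}\le\|t\|_{c^1}\lesssim\|\ell\|$ by Proposition~\ref{prop-of-cp-to-CMOp}. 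Let $\varphi$ be the element of $L^1_{\loc}$ (equivalently of $(\GG)'$) whose Haar coefficients are the entries of $t$; it is the coefficientwise limit of the $\varphi_n$. For each fixed $f\in H^1_{d,d}(\wX)\cap L^2(\wX)$, using $f=T_P\circ T_Lf$,
\[ \langle\varphi_n,f\rangle=\langle T_Lf,t_n\rangle\to\langle T_Lf,t\rangle=L(T_Lf)=\ell(T_P\circ T_Lf)=\ell(f), \]
while $\langle\varphi_n,f\rangle\to\langle\varphi,f\rangle$ by coefficientwise convergence; hence~\eqref{dual pair} holds. Finally, for each fixed open $\Omega\subset\wX$ of finite measure the sum defining $\mathcal{C}_1^d$ runs over a fixed family of rectangles, so $\mathcal{C}_1^d$ is lower semicontinuous under coefficientwise convergence, giving $\mathcal{C}_1^d(\varphi)\le\liminf_n\mathcal{C}_1^d(\varphi_n)\lesssim\|\ell\|$; in particular $\varphi\in\bmo_{d,d}(\wX)$.

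\emph{Main obstacle.} The genuinely delicate step is the passage to $\varphi=T_P(t)$ for a general $t\in c^1$: such $t$ need not be square-summable, so the series $T_P(t)$ need not converge in $L^2(\wX)$, and $\varphi$ must instead be constructed as a coefficientwise limit of the $\varphi_n=T_P(t_n)$ and shown to define a bona fide element of $L^1_{\loc}$ (equivalently of $(\GG)'$) realizing the functional~\eqref{dual pair}; this is exactly where the weak-type density statement of Proposition~\ref{prop density-of-sequence-space} and the lower semicontinuity of $\mathcal{C}_1^d$ are used. A secondary, minor point is the identification of $\langle T_P(t_n),f\rangle$ with the sequence pairing $\langle T_Lf,t_n\rangle$, and, in the first direction, of $\langle\varphi,f\rangle$ with $\langle T_Lf,T_L\varphi\rangle$ --- that is, checking that the non-cancellative part of the Haar expansion drops out on $H^1_{d,d}(\wX)$. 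The remaining steps are routine bookkeeping with $T_L$, $T_P$ and the norms $\|\cdot\|_{s^1}$, $\|\cdot\|_{c^1}$.
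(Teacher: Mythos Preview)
Your proposal is correct and follows essentially the same route as the paper: transport the sequence-space duality $(s^1)'=c^1$ through $T_L$ and $T_P$, using Propositions~\ref{prop-of-Hp-to-sp}--\ref{prop-of-cp-to-CMOp}, the identity $f=T_P\circ T_Lf$, and the estimate~\eqref{L(s) bounded by s norm and t norm}. In the converse direction you are in fact more careful than the paper, which simply writes $\varphi=T_P(t)$ and invokes Proposition~\ref{prop-of-cp-to-CMOp} (stated only for $t\in c^1\cap s^2$) without comment; you correctly observe that a general $t\in c^1$ need not lie in $s^2$ and supply the missing justification via the truncations $t_n$ of Proposition~\ref{prop density-of-sequence-space} together with lower semicontinuity of~$\mathcal{C}_1^d$.
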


\begin{proof}
Suppose $\varphi\in \bmo_{d,d}(X_1\times X_2)$ and we define
the linear functional $\ell$ as in (\ref{dual pair}) for $f\in
H^1_{d,d}(X_1\times X_2)\cap L^2(X_1\times X_2)$. Then by the
Haar expansion we have
\begin{eqnarray*}
    \ell(f)
    &=& \int \varphi(x_1,x_2)\sum_{Q_1\in\mathcal{D}_1} \sum_{Q_2\in\mathcal{D}_2}
        \sum_{u_1=1}^{M_{Q_1}-1}\sum_{u_2=1}^{M_{Q_2}-1}
        \langle f, h_{Q_1}^{u_1}h_{Q_2}^{u_2} \rangle
        h_{Q_1}^{u_1}(x_1)h_{Q_2}^{u_2}(x_2) \, d\mu_1(x_1)d\mu_2(x_2)\\
    & = & \sum_{Q_1\in\mathcal{D}_1} \sum_{Q_2\in\mathcal{D}_2}
        \sum_{u_1=1}^{M_{Q_1}-1}\sum_{u_2=1}^{M_{Q_2}-1}
        \langle f, h_{Q_1}^{u_1}h_{Q_2}^{u_2} \rangle
        \langle \varphi, h_{Q_1}^{u_1}h_{Q_2}^{u_2} \rangle\\
    & \leq & C\Big\|\big\{\langle f, h_{Q_1}^{u_1}h_{Q_2}^{u_2}\rangle\big\}\Big\|_{s^1}
        \Big\|\big\{\langle \varphi, h_{Q_1}^{u_1}h_{Q_2}^{u_2}\rangle\big\}\Big\|_{c^1},
\end{eqnarray*}
where the inequality follows from (\ref{L(s) bounded by s norm
and t norm}).

As a consequence, we obtain that
\begin{eqnarray*}
    |\ell(f)|
    & \leq & C\big\|T_L(f)\big\|_{s^1}\big\|T_L(\varphi)\big\|_{c^1}
    \leq C\|f\|_{H^1_{d,d}(X_1\times X_2)}\mathcal{C}_1^d(\varphi),
\end{eqnarray*}
which implies that $\varphi$ is a bounded linear functional on
$ H^1_{d,d}(X_1\times X_2)\cap L^2(X_1\times X_2)$ and hence
has a unique bounded extension to $H^1_{d,d}(X_1\times X_2)$
with $ \|\ell\|\leq C \mathcal{C}_1^d(\varphi) $ since $
H^1_{d,d}(X_1\times X_2)\cap L^2(X_1\times X_2)$ is dense in $
H^1_{d,d}(X_1\times X_2)$.

Conversely, suppose $\ell$ is a  bounded linear functional  on
$H^1_{d,d}(X_1\times X_2)\cap L^2(X_1\times X_2)$. We set
$\ell_1= \ell \circ T_P$. Then it is obvious that $\ell_1$ is a
bounded linear functional on $s^1\cap s^2$.

Note that $f=T_P\circ T_L(f)$ for every $f\in L^2(X_1\times
X_2)$. We have
\begin{eqnarray*}
    \ell(f)
    = \ell\big (T_P\circ T_L(f) \big)= \ell_1 \big(T_L(f)\big), \ \ {\rm where\ } \ell_1=\ell\circ T_P.
\end{eqnarray*}
Now by the duality of $s^1$ with $c^1$, we obtain that there
exists $t\in c^1$ with $\|t\|_{c^1}\leq \|\ell\|$ such that
\[
    \ell_1(s)
    =\langle t, s \rangle
\]
for all $s\in s^1$. Hence, we have
\begin{eqnarray*}
    \ell_1 \big(T_L(f)\big)&=& \langle t,T_L(f) \rangle  \\
    &=& \sum_{Q_1\in\mathcal{D}_1} \sum_{Q_2\in\mathcal{D}_2}
        \sum_{u_1=1}^{M_{Q_1}-1}\sum_{u_2=1}^{M_{Q_2}-1} t_{Q_1,u_1,Q_2,u_2}
        \langle f, h_{Q_1}^{u_1}h_{Q_2}^{u_2} \rangle \\
    &=& \bigg\langle f, \sum_{Q_1\in\mathcal{D}_1} \sum_{Q_2\in\mathcal{D}_2}
        \sum_{u_1=1}^{M_{Q_1}-1}\sum_{u_2=1}^{M_{Q_2}-1}
        t_{Q_1,u_1,Q_2,u_2} , h_{Q_1}^{u_1}h_{Q_2}^{u_2} \bigg\rangle \\
    &=& \big\langle f, T_P(t) \big\rangle,
\end{eqnarray*}
which implies that
\begin{eqnarray*}
    \ell(f)
    = \big\langle f, T_P(t) \big\rangle
\end{eqnarray*}
and moreover,
\[
    \mathcal{C}_1^d(T_P(t))
    \leq C\|t\|_{c^1}
    \leq C\|\ell\|.
    \qedhere
\]
\end{proof}

\section{Dyadic structure theorems for $H^1(\wX)$ and $\bmo(\wX)$}
\label{sec:multiparameter}
\setcounter{equation}{0}

In this final section, we prove our dyadic structure results.
Namely, the product Hardy space $H^1(\wX)$ is a sum of finitely
many dyadic product Hardy spaces, and the product $\bmo(\wX)$
space is an intersection of finitely many dyadic product $\bmo$
spaces. We also establish the atomic decomposition of the
dyadic product Hardy
spaces~$H^1_{\mathcal{D}_1^{t_1},\mathcal{D}_2^{t_2}}(\widetilde{X})$,
which plays a key role in the proofs of the dyadic structure
results. As in the previous section, these results all hold for
$n$~parameters, but for notational simplicity we write in terms
of two parameters.

\begin{thm}\label{thm structure of Hardy space}
    Let $\wX = X_1 \times X_2$ be a product space of homogeneous
    type. Then
    \[
        H^1(X_1\times X_2) =
        \sum_{t_1 = 1}^{T_1} \sum_{t_2 = 1}^{T_2}
        H^1_{\mathcal{D}_1^{t_1},\mathcal{D}_2^{t_2}}(X_1\times
        X_2).
    \]
    The corresponding result holds for $n$ parameters.
\end{thm}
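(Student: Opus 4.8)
The plan is to establish the two inclusions $\sum_{t_1,t_2}H^1_{\mathcal{D}_1^{t_1},\mathcal{D}_2^{t_2}}(X_1\times X_2)\subseteq H^1(X_1\times X_2)$ and its reverse separately, using atomic decompositions on both sides together with the adjacent dyadic systems of Theorem~\ref{thm:existence2}. It is convenient to call an atom as in Definition~\ref{def-of-H1-atom} whose rectangles $R=Q_1\times Q_2$ are drawn from a product system $\mathcal{D}_1^{t_1}\times\mathcal{D}_2^{t_2}$ a \emph{dyadic atom}: because each dyadic cube obeys \eqref{eq:contain} and each $\mu_j$ is doubling, such a rectangle is trapped between two products of concentric balls of comparable measure, so dyadic atoms differ from genuine product $H^1$-atoms only through a fixed rescaling, and the atomic characterisation of $H^1(\widetilde X)$ furnished by Theorem~\ref{theorem-Hp atom decomp} (with $p=1$, $q=2$) is insensitive to the choice.

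For the inclusion $\sum_{t_1,t_2}H^1_{\mathcal{D}_1^{t_1},\mathcal{D}_2^{t_2}}\subseteq H^1$ it is enough to embed each summand continuously into $H^1(\widetilde X)$, since the sum is finite. Given $f$ in one of the dyadic spaces, I would invoke the dyadic atomic decomposition (Theorem~\ref{thm atom for dyadic H1}) to write $f=\sum_i\lambda_i a_i$ with $\sum_i|\lambda_i|\lesssim\|f\|_{H^1_{\mathcal{D}_1^{t_1},\mathcal{D}_2^{t_2}}}$ and each $a_i$ a dyadic atom; replacing the union of dyadic rectangles supporting $a_i$ by its interior --- which changes it by a $\mu$-null set, using the boundary-smallness available for the chosen systems --- exhibits $a_i$ as a fixed multiple of a product $H^1$-atom, whence $\|a_i\|_{H^1}\lesssim1$ and $\|f\|_{H^1}\lesssim\|f\|_{H^1_{\mathcal{D}_1^{t_1},\mathcal{D}_2^{t_2}}}$. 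The only new feature relative to the Euclidean case of~\cite{LPW} is that dyadic cubes are not balls, and this is absorbed by \eqref{eq:contain} and doubling.

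For the reverse inclusion $H^1\subseteq\sum_{t_1,t_2}H^1_{\mathcal{D}_1^{t_1},\mathcal{D}_2^{t_2}}$ I would reduce, by density of $H^1(\widetilde X)\cap L^2(\widetilde X)$ and Theorem~\ref{theorem-Hp atom decomp}, to splitting a single product $H^1$-atom $a$ supported on an open set $\Omega$ of finite measure, with $a=\sum_{R\in m(\Omega)}a_R$, $\supp a_R\subseteq\overline C R$ and $\sum_R\|a_R\|_{L^2}^2\le\mu(\Omega)^{-1}$. For each $R=Q_1\times Q_2$ here, I would apply Theorem~\ref{thm:existence2} in each factor to the ball whose product is $\overline C R$: this picks out $t_j\in\{1,\dots,T_j\}$ and dyadic cubes $\widehat Q_j\in\mathcal{D}_j^{t_j}$ with $\overline C R\subseteq\widehat Q_1\times\widehat Q_2$ and $\mu_j(\widehat Q_j)\lesssim\mu_j(Q_j)$ (the latter by doubling, as $\widehat Q_j$ sits inside a ball of radius comparable to $\diam Q_j$). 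Grouping the rectangles by the assigned pair and setting $a^{t_1,t_2}:=\sum_{R\mapsto(t_1,t_2)}a_R$ gives $a=\sum_{t_1,t_2}a^{t_1,t_2}$, a sum of at most $T_1T_2$ terms. I would then verify that a bounded multiple of each $a^{t_1,t_2}$ is a dyadic atom for $\mathcal{D}_1^{t_1}\times\mathcal{D}_2^{t_2}$: it is supported in $\widehat\Omega^{t_1,t_2}:=\bigcup_{R\mapsto(t_1,t_2)}\widehat Q_1\times\widehat Q_2$, which (up to a null set) is open with $\mu(\widehat\Omega^{t_1,t_2})\lesssim\mu(\Omega)$, because every $\widehat Q_1\times\widehat Q_2$ lies in $\{M_s\chi_\Omega>c\}$ for a fixed $c>0$ (since $\mu_j(\widehat Q_j)\lesssim\mu_j(Q_j)$ while $Q_1\times Q_2\subseteq\Omega$) and the strong maximal function $M_s$ on $\widetilde X$ is bounded on $L^2$; the $L^2$-size bound and the rectangle-cancellation conditions for $a^{t_1,t_2}$ then follow from those of $a$ and the $a_R$ --- using the near-orthogonality of the rectangle atoms in the Journ\'e-type decomposition --- together with $\mu(\widehat\Omega^{t_1,t_2})^{-1}\gtrsim\mu(\Omega)^{-1}$. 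By Theorem~\ref{thm atom for dyadic H1} this gives $\|a^{t_1,t_2}\|_{H^1_{\mathcal{D}_1^{t_1},\mathcal{D}_2^{t_2}}}\lesssim1$, so $a$, and hence (summing in $i$ and passing to the limit) every $f\in H^1(\widetilde X)$, lies in $\sum_{t_1,t_2}H^1_{\mathcal{D}_1^{t_1},\mathcal{D}_2^{t_2}}$ with norm $\lesssim\|f\|_{H^1}$.

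The hard part will be this last verification, that the regrouped pieces $a^{t_1,t_2}$ are honest dyadic atoms, and above all the measure comparison $\mu(\widehat\Omega^{t_1,t_2})\lesssim\mu(\Omega)$: this is exactly where the one-third trick (Theorem~\ref{thm:existence2}), the doubling of each $\mu_j$, and the $L^2$-boundedness of the strong maximal function on $\widetilde X$ (a consequence of Theorem~\ref{thm:maximal_function_homog}) must be used together. The ancillary matters --- density of the relevant $L^2$-intersections, convergence of the regrouped series in the finite sum of Banach spaces, and the replacement of unions of dyadic cubes by genuine open sets of comparable measure --- are routine. Finally, the $n$-parameter statement follows by the same argument, applying Theorem~\ref{thm:existence2} independently in each of the $n$ factors.
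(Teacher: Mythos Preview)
Your proposal is correct and follows essentially the same route as the paper: both directions are handled via atomic decompositions, with the inclusion $H^1_{\mathcal{D}^{t_1},\mathcal{D}^{t_2}}\subseteq H^1$ coming from the observation that dyadic atoms are continuous atoms, and the reverse inclusion obtained by taking a continuous atom, assigning each of its rectangle atoms $a_R$ to a pair $(t_1,t_2)$ via Theorem~\ref{thm:existence2}, and regrouping. The paper simply asserts that the regrouped piece $a_{j,t_1,t_2}$ is a dyadic atom, whereas you spell out the key measure comparison $\mu(\widehat\Omega^{t_1,t_2})\lesssim\mu(\Omega)$ via the $L^2$-boundedness of the strong maximal function --- a genuine detail that the paper leaves implicit --- so your write-up is, if anything, slightly more complete on this point.
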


As a consequence of Theorem \ref{thm structure of Hardy space},
we obtain the following by duality.
\begin{thm}\label{thm equivalence of BMO}
    Let $\wX = X_1\times X_2$ be a product space of homogeneous type.
    Then
    \[
        \bmo(\wX)
        = \bigcap_{t_1 = 1}^{T_1} \bigcap_{t_2 = 1}^{T_2}
            \bmo_{\mathcal{D}_1^{t_1},\mathcal{D}_2^{t_2}}(\wX).
    \]
        The corresponding result holds for $n$ parameters.

\end{thm}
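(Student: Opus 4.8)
The plan is to deduce Theorem~\ref{thm equivalence of BMO} from Theorem~\ref{thm structure of Hardy space} by the standard duality that identifies a finite sum of Banach spaces with the intersection of their duals. The two inputs are the continuous duality $\big(H^1(\wX)\big)' = \bmo(\wX)$, established in~\cite{HLW}, and the dyadic duality $\big(H^1_{\mathcal{D}_1^{t_1},\mathcal{D}_2^{t_2}}(\wX)\big)' = \bmo_{\mathcal{D}_1^{t_1},\mathcal{D}_2^{t_2}}(\wX)$ for each $(t_1,t_2)\in\{1,\dots,T_1\}\times\{1,\dots,T_2\}$, which is Theorem~\ref{thm:duality_dyadic_H1_BMO} (proved there through the sequence-space duality of Theorem~\ref{prop-dual-of-sequence-space} and the lifting/projection estimates of Propositions~\ref{prop-of-Hp-to-sp}--\ref{prop-of-cp-to-CMOp}). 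In both cases the duality pairing is the usual integral pairing on $L^2(\wX)$, extended by density; recall that $H^1(\wX)\cap L^2(\wX)$ is dense in $H^1(\wX)$, and similarly in each dyadic Hardy space.

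First I would record the abstract fact: if a Banach space $B$ equals a finite sum $B=\sum_{i=1}^N B_i$ of Banach spaces, each continuously embedded in a fixed Hausdorff space with pairing compatible with a fixed pairing on $B$, and $\|f\|_B\simeq\inf\{\sum_i\|f_i\|_{B_i}:f=\sum_i f_i\}$, then $B'=\bigcap_{i=1}^N B_i'$ with $\|g\|_{B'}\simeq\max_i\|g\|_{B_i'}$. The inclusion $B'\subseteq\bigcap_i B_i'$ is immediate by restriction. For the reverse inclusion, given $g\in\bigcap_i B_i'$ and $f\in B$, one picks a decomposition $f=\sum_i f_i$ with $\sum_i\|f_i\|_{B_i}\lesssim\|f\|_B$ and estimates $|\langle g,f\rangle|\le\sum_i|\langle g,f_i\rangle|\le\big(\max_i\|g\|_{B_i'}\big)\sum_i\|f_i\|_{B_i}\lesssim\big(\max_i\|g\|_{B_i'}\big)\|f\|_B$.

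Next I would apply this with $B=H^1(\wX)$ and with the $N=T_1T_2$ spaces $B_i$ taken to be the dyadic product Hardy spaces $H^1_{\mathcal{D}_1^{t_1},\mathcal{D}_2^{t_2}}(\wX)$. Theorem~\ref{thm structure of Hardy space} supplies the set-theoretic identity $H^1(\wX)=\sum_{t_1,t_2}H^1_{\mathcal{D}_1^{t_1},\mathcal{D}_2^{t_2}}(\wX)$; the required norm comparability $\|f\|_{H^1}\simeq\inf\{\sum_{t_1,t_2}\|f^{t_1,t_2}\|_{H^1_{d,d}}\}$ is either already recorded in the proof of that theorem or, if not, follows from the open mapping theorem applied to the (bounded, surjective) summation map from the $\ell^1$-direct sum of the dyadic Hardy spaces onto $H^1(\wX)$. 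Feeding this into the abstract fact and invoking the two duality statements then yields
\[
    \bmo(\wX)=\big(H^1(\wX)\big)'
    =\bigcap_{t_1=1}^{T_1}\bigcap_{t_2=1}^{T_2}\big(H^1_{\mathcal{D}_1^{t_1},\mathcal{D}_2^{t_2}}(\wX)\big)'
    =\bigcap_{t_1=1}^{T_1}\bigcap_{t_2=1}^{T_2}\bmo_{\mathcal{D}_1^{t_1},\mathcal{D}_2^{t_2}}(\wX),
\]
with $\mathcal{C}_1(f)\simeq\max_{t_1,t_2}\mathcal{C}_{d,d}^{t_1,t_2}(f)$; the $n$-parameter statement is identical with $T_1T_2$ replaced by $T_1\cdots T_n$.

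The main obstacle I anticipate is not the abstract argument but the bookkeeping needed to see that both sides of the asserted equality live inside a single ambient space, i.e.\ that the functionals produced above on $H^1(\wX)$ and on the dyadic Hardy spaces are represented by the \emph{same} integral pairing rather than being merely abstractly isomorphic. Concretely, one has $\bmo(\wX)\subset(\GG)'$ from Definition~\ref{def-BMO} and each $\bmo_{\mathcal{D}_1^{t_1},\mathcal{D}_2^{t_2}}(\wX)\subset L^1_{\loc}$ from Definition~\ref{def-dyadic-BMO}, and one must identify these, modulo the common null space of the pairing against $H^1(\wX)\cap L^2(\wX)$, as subspaces of one space. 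This identification is exactly what is provided by the density of $H^1\cap L^2$ in $H^1$ (and of its dyadic analogue) together with the fact that all the relevant pairings coincide on $L^2(\wX)$ with the ordinary inner product. Once that is in place, the displayed chain of equalities is literal, and the theorem follows.
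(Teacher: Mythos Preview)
Your proposal is correct and follows exactly the approach the paper takes: the paper does not give a separate proof of Theorem~\ref{thm equivalence of BMO} but simply states (just before the theorem) that it is obtained ``by duality'' as a consequence of Theorem~\ref{thm structure of Hardy space}. Your write-up spells out precisely this duality argument, invoking the continuous duality from~\cite{HLW}, the dyadic duality of Theorem~\ref{thm:duality_dyadic_H1_BMO}, and the standard identification of the dual of a finite sum with the intersection of the duals; your discussion of the ambient-space bookkeeping is more careful than anything the paper makes explicit.
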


To do this, we first give the definition of atoms for the
dyadic product Hardy space
$H^1_{\mathcal{D}_1^{t_1},\mathcal{D}_2^{t_2}}(\widetilde{X})$
and then provide the atomic decomposition, where
$t_1=1,\ldots,T_1;\ t_2=1,\ldots,T_2$. For brevity, we will
drop the reference to the parameters $(t_1,t_2)$ until we
return to the proof of Theorem~\ref{thm structure of Hardy
space}.

\begin{defn}\label{def of dyadic atom}
    A function $a(x_1,x_2)$ defined on $\widetilde{X}$ is called a
    \emph{dyadic atom of
    $H^1_{\mathcal{D}_1,\mathcal{D}_2}(\widetilde{X})$}
    if $a(x_1,x_2)$ satisfies:
    \begin{itemize}
        \item[(1)] $\supp a\subset\Omega$, where $\Omega$
            is an open set of $\widetilde{X}$ with finite
            measure;

        \item[(2)] $\|a\|_{L^2}\leq \mu(\Omega)^{-1/2}$;

        \item[(3)] $a$ can be further decomposed into
            rectangle atoms $a_R$ associated to dyadic
            rectangle $R=Q_1\times Q_2\in
            \mathcal{D}_1\times\mathcal{D}_2$, satisfying
            the following three conditions:

            \smallskip
            (i) $\supp a_R\subset R$ \quad
            (\emph{localization});

            \smallskip
            (ii) $\int_{ X_1 }a_R(x_1,x_2) \, d\mu_1(x_1) =
            0$ for a.e.~$x_2\in  X_2 $ and $\int_{ X_2
            }a_R(x_1,x_2) \, d\mu_2(x_2) = 0$

            \hskip.75cm for a.e.~$x_1\in  X_1 $ \quad
            (\emph{cancellation});

            \smallskip
            (iii) $a=\sum\limits_{R\in m(\Omega)}a_R$ and $
            \sum\limits_{R\in
            m(\Omega)}\|a_R\|_{L^2(\wX)}^2 \leq
            \mu(\Omega)^{-1}$ \quad (\emph{size}).
    \end{itemize}
\end{defn}

\begin{remark}
    We note that the only difference between the dyadic atoms
    defined here and the ``continuous'' atoms defined earlier
    in Definition~\ref{def-of-H1-atom} is that here the
    rectangle atoms $a_R$ are supported on the rectangles~$R$,
    while for the continuous atoms, the rectangle atoms $a_R$
    are supported on a dilate of~$R$.
\end{remark}

Next we provide the atomic decomposition for
$H^1_{\mathcal{D}_1,\mathcal{D}_2}(\widetilde{X})$.

\begin{thm}\label{thm atom for dyadic H1}
    If $f\in
    H^1_{\mathcal{D}_1,\mathcal{D}_2}(\widetilde{X})
    \cap L^2(\widetilde{X}) $, then
    \[
        f
        = \sum_k\lambda_ka_k,
    \]
    where each $a_k$ is an atom of
    $H^1_{\mathcal{D}_1,\mathcal{D}_2}(\widetilde{X}) $
    as in Definition \ref{def of dyadic atom}, and
    $\sum_k|\lambda_k|\leq
    C\|f\|_{H^1_{\mathcal{D}_1,\mathcal{D}_2}(\widetilde{X})
    }$.

    Conversely, suppose $f:=\sum_k\lambda_ka_k$ where each $a_k$ is
    an atom of
    $H^1_{\mathcal{D}_1,\mathcal{D}_2}(\widetilde{X}) $
    as in Definition \ref{def of dyadic atom}, and
    $\sum_k|\lambda_k|\leq C < \infty $, then $f\in
    H^1_{\mathcal{D}_1,\mathcal{D}_2}(\widetilde{X}) $
    and $
    \|f\|_{H^1_{\mathcal{D}_1,\mathcal{D}_2}(\widetilde{X})
    } \leq C \sum_k|\lambda_k|$.
\end{thm}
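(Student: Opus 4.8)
The plan is to establish the two implications separately, deriving the atomic decomposition from a Chang--Fefferman stopping-time argument and the converse from a short direct estimate. Throughout I would use the product Haar expansion $f=\sum_{Q_1\in\mathcal{D}_1}\sum_{Q_2\in\mathcal{D}_2}\sum_{u_1,u_2}\langle f,h_{Q_1}^{u_1}h_{Q_2}^{u_2}\rangle\,h_{Q_1}^{u_1}h_{Q_2}^{u_2}$, which holds in $L^2(\wX)$ for $f\in L^2(\wX)$ by tensoring Theorem~\ref{thm:convergence}, together with the Parseval identity $\|S_{d,d}(g)\|_{L^2(\wX)}=\|g\|_{L^2(\wX)}$, valid for any $g$ in the closed linear span of the product Haar system (a consequence of Theorem~\ref{prop:HaarFuncProp} and the normalization $\int_{X_i}\widetilde{\chi}_{Q_i}^2\,d\mu_i=1$); this applies in particular to each piece $g_k$ below and, after the support/cancellation observation, to every dyadic atom.

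For the atomic decomposition, given $f\in H^1_{\mathcal{D}_1,\mathcal{D}_2}(\wX)\cap L^2(\wX)$, I would reuse the stopping-time sets from the proof of Theorem~\ref{prop-dual-of-sequence-space}: let $\Omega_k:=\{x:S_{d,d}(f)(x)>2^k\}$ (so $\mu(\Omega_k)<\infty$, since $S_{d,d}(f)\in L^1$), let $B_k$ be the set of dyadic rectangles $R=Q_1\times Q_2$ with $\mu(\Omega_k\cap R)>\tfrac12\mu(R)$ and $\mu(\Omega_{k+1}\cap R)\leq\tfrac12\mu(R)$ (so each rectangle lies in exactly one $B_k$), and let $\widetilde{\Omega}_k$ be an open set of finite measure containing every $R\in B_k$ with $\mu(\widetilde{\Omega}_k)\lesssim\mu(\Omega_k)$ (built from $\{M_s(\chi_{\Omega_k})>c\}$, using doubling to compare each dyadic cube with a ball of comparable $\mu_i$-measure, and the $L^2$-boundedness of the strong maximal function on $\wX$). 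Setting $g_k:=\sum_{R\in B_k}\sum_{u_1,u_2}\langle f,h_{Q_1}^{u_1}h_{Q_2}^{u_2}\rangle\,h_{Q_1}^{u_1}h_{Q_2}^{u_2}$, I get $f=\sum_k g_k$ in $L^2$ with $\supp g_k\subset\widetilde{\Omega}_k$, and the crucial size bound $\|g_k\|_{L^2(\wX)}^2=\sum_{R\in B_k}\sum_{u_1,u_2}|\langle f,h_{Q_1}^{u_1}h_{Q_2}^{u_2}\rangle|^2\lesssim 2^{2k}\mu(\Omega_k)$, obtained exactly as in Remark~\ref{remark:duality seq spaces} and the displays after it by integrating the partial square function over $\widetilde{\Omega}_k\setminus\Omega_{k+1}$ (bounded above by $2^{2(k+1)}\mu(\widetilde{\Omega}_k)$ since $S_{d,d}(f)\leq 2^{k+1}$ off $\Omega_{k+1}$, and below by $\tfrac12\|g_k\|_{L^2}^2$ since $\mu((\widetilde{\Omega}_k\setminus\Omega_{k+1})\cap R)\geq\tfrac12\mu(R)$). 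Then $\lambda_k:=C2^k\mu(\Omega_k)$ and $a_k:=\lambda_k^{-1}g_k$ give a dyadic atom in the sense of Definition~\ref{def of dyadic atom}: it is supported in $\widetilde{\Omega}_k$; $\|a_k\|_{L^2}\leq\mu(\widetilde{\Omega}_k)^{-1/2}$ by the size bound; and its rectangle pieces $a_{k,R}:=\lambda_k^{-1}\sum_{u_1,u_2}\langle f,h_{Q_1}^{u_1}h_{Q_2}^{u_2}\rangle\,h_{Q_1}^{u_1}h_{Q_2}^{u_2}$, $R=Q_1\times Q_2\in B_k$, are supported in $R$, satisfy $\int_{X_1}a_{k,R}\,d\mu_1=\int_{X_2}a_{k,R}\,d\mu_2=0$, and obey $\sum_{R\in B_k}\|a_{k,R}\|_{L^2}^2=\lambda_k^{-2}\|g_k\|_{L^2}^2\leq\mu(\widetilde{\Omega}_k)^{-1}$ --- all by the support and cancellation properties of the Haar functions from Theorem~\ref{prop:HaarFuncProp}. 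Finally $\sum_k|\lambda_k|=C\sum_k 2^k\mu(\Omega_k)\lesssim\|S_{d,d}(f)\|_{L^1}=\|f\|_{H^1_{\mathcal{D}_1,\mathcal{D}_2}(\wX)}$ by the elementary bound $\sum_k 2^k\mu(\{S_{d,d}(f)>2^k\})\lesssim\int S_{d,d}(f)\,d\mu$, and $f=\sum_k\lambda_k a_k$ converges in $L^2$ and, by the converse, in $H^1_{\mathcal{D}_1,\mathcal{D}_2}(\wX)$.

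For the converse, since $S_{d,d}$ is an $\ell^2$-norm of the linear Haar coefficients, Minkowski gives $S_{d,d}(f+g)\leq S_{d,d}(f)+S_{d,d}(g)$, so $\|S_{d,d}(f)\|_{L^1}\leq\sum_k|\lambda_k|\,\|S_{d,d}(a_k)\|_{L^1}$ and it suffices to show $\|S_{d,d}(a)\|_{L^1}\lesssim 1$ for a single dyadic atom $a$ supported on an open finite-measure set $\Omega$. The key point --- and, as the Remark after Definition~\ref{def of dyadic atom} notes, the advantage of dyadic atoms over continuous ones --- is that $S_{d,d}(a)$ is supported in $\Omega$ itself, with no dilation: each rectangle atom $a_R$ is supported on $R=Q_1\times Q_2$ and cancels in each variable, and by nestedness of dyadic cubes (property~\eqref{eq:nested}), if $Q_i\subsetneq Q_i'$ then $Q_i$ sits inside a single child of $Q_i'$, on which $h_{Q_i'}^{u_i}$ is constant, so $\langle a_R,h_{Q_1'}^{u_1}h_{Q_2'}^{u_2}\rangle=0$ unless $Q_1'\subseteq Q_1$ and $Q_2'\subseteq Q_2$; hence every nonzero Haar coefficient of $a=\sum_{R\in m(\Omega)}a_R$ comes from a rectangle contained in $\bigcup_{R\in m(\Omega)}R\subseteq\Omega$. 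Cauchy--Schwarz and Parseval then give
\[
    \|S_{d,d}(a)\|_{L^1(\wX)}
    =\int_{\Omega}S_{d,d}(a)\,d\mu
    \leq\mu(\Omega)^{1/2}\,\|S_{d,d}(a)\|_{L^2(\wX)}
    =\mu(\Omega)^{1/2}\,\|a\|_{L^2(\wX)}
    \leq 1,
\]
so $\|f\|_{H^1_{\mathcal{D}_1,\mathcal{D}_2}(\wX)}\leq\sum_k|\lambda_k|$; and since $\|a\|_{L^1}\leq\mu(\Omega)^{1/2}\|a\|_{L^2}\leq 1$ for every dyadic atom, the series converges in $L^1(\wX)$ as well, so $f\in L^1(\wX)$ and therefore $f\in H^1_{\mathcal{D}_1,\mathcal{D}_2}(\wX)$.

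I expect the main work to lie in the atomic decomposition: arranging that the pieces $g_k$ are supported in \emph{open} sets of finite measure comparable to $\mu(\Omega_k)$ forces the geometric enlargement $\widetilde{\Omega}_k$ and a careful use of doubling to pass between dyadic cubes and balls, and the size estimate $\|g_k\|_{L^2}^2\lesssim 2^{2k}\mu(\Omega_k)$ rests on the interplay of the two stopping conditions defining $B_k$; the converse is comparatively immediate. The $n$-parameter versions of this theorem, and hence of Theorems~\ref{thm structure of Hardy space} and~\ref{thm equivalence of BMO}, follow by running the same argument with $n$ indices in place of two, since every ingredient --- the tensored Haar expansion and Parseval, the $L^2$-boundedness of the strong maximal function, and the localization of rectangle atoms --- carries over verbatim.
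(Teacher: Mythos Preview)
Your argument is essentially the paper's: the same stopping sets $\Omega_k$, $B_k$, $\widetilde{\Omega}_k$, the same size estimate $\|g_k\|_{L^2}^2\lesssim 2^{2k}\mu(\Omega_k)$ obtained by integrating over $\widetilde{\Omega}_k\setminus\Omega_{k+1}$, and the same support/cancellation observation for $S_{d,d}(a_R)$ in the converse. Two small points: (i) Definition~\ref{def of dyadic atom} requires the rectangle pieces to be indexed by \emph{maximal} rectangles $\overline{R}\in m(\widetilde{\Omega}_k)$, not by $R\in B_k$, so you need one extra regrouping step---assign each $R\in B_k$ to a maximal rectangle $\overline{R}\supseteq R$ and set $a_{k,\overline{R}}:=\sum_{R\mapsto\overline{R}}a_{k,R}$ (the paper does exactly this, and orthogonality of the $a_{k,R}$ preserves the $\ell^2$ bound); (ii) your converse is in fact slightly cleaner than the paper's, which introduces $\bar{\Omega}=\bigcup 100R$ before observing that $S_{d,d}(a_R)$ is supported in $R\subset\Omega$ anyway.
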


\begin{proof}
Suppose $f\in H^1_{\mathcal{D}_1,\mathcal{D}_2}(\widetilde{X})
$. Then $S_{d,d}(f)\in L^1(\widetilde{X})$. For each
$k\in\mathbb{Z}$, we now define
\begin{eqnarray*}
    \Omega_k
    &=& \{ (x_1,x_2)\in \widetilde{X}:
        S_{d,d}(f)(x_1,x_2)>2^k \},\\
    B_k
    &=& \big\{ R=Q_1\times Q_2\in\mathcal{D}_1\times \mathcal{D}_2: \mu(\Omega_k\cap R)>{1\over2}\mu(R),
        \ {\rm and}\ \mu(\Omega_{k+1}\cap R) \leq {1\over2}\mu(R)\big\}\\
    \widetilde{\Omega}_k
    &=& \{ (x_1,x_2)\in \widetilde{X}:
        M_s(\chi_{\Omega_k})(x_1,x_2)
        > \widetilde{C} \},
\end{eqnarray*}
where $M_s$ is the strong maximal function on $\widetilde{X}$
and $\widetilde{C}$ is a constant to be determined later.

Now by the Haar expansion convergent in $L^2(\wX)$, and since
each rectangle $R\in \mathcal{D}_1\times \mathcal{D}_2 $
belongs to exactly one set $B_k$, we have
\begin{eqnarray*}
    f
    &=& \sum_{Q_1\times Q_2 \in \mathcal{D}_1\times \mathcal{D}_2}
        \sum_{u_1=1}^{M_{Q_1}}\sum_{u_2=1}^{M_{Q_2}}
        \langle f,\ h_{Q_1}^{u_1}h_{Q_2}^{u_2} \rangle
        h_{Q_1}^{u_1}h_{Q_2}^{u_2}\\
   &=& \sum_k\sum_{Q_1\times Q_2\in B_k}\ \
        \sum_{u_1=1}^{M_{Q_1}}\sum_{u_2=1}^{M_{Q_2}}
        \langle f,\ h_{Q_1}^{u_1}h_{Q_2}^{u_2} \rangle
        h_{Q_1}^{u_1}h_{Q_2}^{u_2}\\
    &=:& \sum_k\lambda_ka_k,
\end{eqnarray*}
where
\[
    a_k(x_1,x_2)
    = {1\over \lambda_k}
        \sum_{R=Q_1\times Q_2 \in B_k}\ \
        \sum_{u_1=1}^{M_{Q_1}}\sum_{u_2=1}^{M_{Q_2}}
        \langle f,\ h_{Q_1}^{u_1}h_{Q_2}^{u_2} \rangle
        h_{Q_1}^{u_1}(x_1)h_{Q_2}^{u_2}(x_2)
\]
and
\begin{eqnarray*}\label{atom lambda k}
    \lambda_k
    = \Big(\sum_{R=Q_1\times Q_2 \in B_k}\ \
        \sum_{u_1=1}^{M_{Q_1}}\sum_{u_2=1}^{M_{Q_2}}
        \big|\langle f,\ h_{Q_1}^{u_1}h_{Q_2}^{u_2} \rangle\big|^2
        \Big)^{1/2}
        \mu(\widetilde{\Omega}_k)^{1/2}\hskip.7cm
\end{eqnarray*}

To see that the atomic decomposition $\sum_{k=-\infty}^\infty
\lambda_k a_k$ converges to $f$ in the $L^2$ norm, we only need
to show that $\|\sum_{|k|>\ell}\lambda_k a_k\|_2\rightarrow 0$
as $\ell\rightarrow \infty.$ This follows from the following
duality argument. Let $g\in L^2$ with $\|g\|_2 = 1$. Then
\[
    \big\|\sum_{|k|>\ell}\lambda_k a_k\big\|_2
    = \sup_{\|g\|_2=1}
        \big|\langle\sum_{|k|>\ell}\lambda_k a_k, g\rangle\big|.
\]
Note that
\begin{eqnarray*}
    \big\langle\sum_{|k|>\ell} \lambda_k a_k,g\big\rangle
    &=& \sum_{|k|>\ell}\sum_{R=Q_1\times Q_2 \in B_k}\ \
        \sum_{u_1=1}^{M_{Q_1}}\sum_{u_2=1}^{M_{Q_2}}
        \langle f,\ h_{Q_1}^{u_1}h_{Q_2}^{u_2} \rangle
        \langle g, h_{Q_1}^{u_1}h_{Q_2}^{u_2} \rangle.
\end{eqnarray*}
Applying H\"older's inequality gives
\begin{eqnarray*}
    \big|\big\langle\sum_{|k|>\ell}\lambda_k a_k, g\big\rangle\big|
    &\leq & \Big(\sum_{|k|>\ell}\sum_{R=Q_1\times Q_2 \in B_k}\ \
        \sum_{u_1=1}^{M_{Q_1}}\sum_{u_2=1}^{M_{Q_2}}
        \big|\langle f,\ h_{Q_1}^{u_1}h_{Q_2}^{u_2} \rangle\big|^2\Big)^{1/2}\\
    &&\times \Big(\sum_{|k|>\ell}\sum_{R=Q_1\times Q_2 \in B_k}\ \
        \sum_{u_1=1}^{M_{Q_1}}\sum_{u_2=1}^{M_{Q_2}}
        \big|\langle g,\ h_{Q_1}^{u_1}h_{Q_2}^{u_2} \rangle\big|^2\Big)^{1/2}
\end{eqnarray*}
Note again that
\[
    \Big(\sum_{|k|>\ell}\sum_{R=Q_1\times Q_2 \in B_k}\ \
        \sum_{u_1=1}^{M_{Q_1}}\sum_{u_2=1}^{M_{Q_2}}
        \big|\langle g,\ h_{Q_1}^{u_1}h_{Q_2}^{u_2} \rangle\big|^2\Big)^{1/2}
    \leq C\|g\|_2
    \leq C,
\]
and
\begin{eqnarray*}
    \Big(\sum_{|k|>\ell} \sum_{R=Q_1\times Q_2 \in B_k}\ \
    \sum_{u_1=1}^{M_{Q_1}} \sum_{u_2=1}^{M_{Q_2}}
    \big|\langle f,\ h_{Q_1}^{u_1}h_{Q_2}^{u_2} \rangle\big|^2\Big)^{1/2}
\end{eqnarray*}
tends to zero as $\ell$ tends to infinity. This implies that
$\|\sum_{|k|>\ell}\lambda_k a_k\|_2\rightarrow 0$ as
$\ell\rightarrow \infty$ and hence, the series
$\sum_{k=-\infty}^\infty \lambda_k a_k$ converges to $f$ in the
$L^2$ norm.

Next, it is easy to see that for each $k$, $\supp a_k\subset
\widetilde{\Omega}_k$ since $R\in B_k$ implies that $R\subset
\widetilde{\Omega}_k$ when we choose $\widetilde{C}<1/2$. Hence
we see that condition~(1) holds.

Now for each $k$, from the definition of $\lambda_k$ and the
H\"older's inequality, we have
\begin{eqnarray*}
    \|a_k\|_{L^2(\wX)}
    &=& \sup_{g:\ \|g\|_{L^2(\wX)}=1}
        \big|\langle a_k, g\rangle\big|
        = {1\over\lambda_k}\Big|\sum_{R=Q_1\times Q_2 \in B_k}\ \
        \sum_{u_1=1}^{M_{Q_1}}\sum_{u_2=1}^{M_{Q_2}}
        \langle f,\ h_{Q_1}^{u_1}h_{Q_2}^{u_2} \rangle
        \langle g, h_{Q_1}^{u_1}h_{Q_2}^{u_2} \rangle\Big|\\
    &\leq & {1\over\lambda_k}\Big(\sum_{R=Q_1\times Q_2 \in B_k}\ \
        \sum_{u_1=1}^{M_{Q_1}}\sum_{u_2=1}^{M_{Q_2}} \big|\langle f,\
        h_{Q_1}^{u_1}h_{Q_2}^{u_2} \rangle\big|^2\Big)^{1/2}\\
    &&\times \Big(\sum_{R=Q_1\times Q_2 \in B_k}\ \
        \sum_{u_1=1}^{M_{Q_1}}\sum_{u_2=1}^{M_{Q_2}}
        \big|\langle g,\ h_{Q_1}^{u_1}h_{Q_2}^{u_2} \rangle\big|^2\Big)^{1/2}\\
    &\leq& \mu(\widetilde{\Omega}_k)^{-1/2},
\end{eqnarray*}
which implies that condition~(2) holds.

It remains to check that $a_k$ satisfies condition~(3) of
Definition~\ref{def of dyadic atom}. To see this, we can
further decompose $a_k$ as
$$
   a_k
   = \sum_{\overline{R}\in m(\widetilde{\Omega}_k)} a_{k, \overline{R}},
$$
where $m(\widetilde{\Omega}_k)$ denotes the collection of
maximal dyadic rectangles $\overline{R}\in \mathcal{D}_1\times
\mathcal{D}_2$ contained in $\widetilde{\Omega}_k$, and
\begin{eqnarray*}
    a_{k, \overline{R}}(x_1,x_2)
    &=& {1\over \lambda_k }\ \
        \sum_{R=Q_1\times Q_2 \in B_k,\ R\subset \overline{R}}\ \
        \sum_{u_1=1}^{M_{Q_1}}\sum_{u_2=1}^{M_{Q_2}} \langle f,\
        h_{Q_1}^{u_1}h_{Q_2}^{u_2} \rangle
        h_{Q_1}^{u_1}(x_1)h_{Q_2}^{u_2}(x_2) .
\end{eqnarray*}

By definition we can verify that
$$
    {\rm supp}\,a_{k,\overline{R}}
    \subset \overline{R}.
$$
By the facts that $\int h_{Q_1}^{u_1}(x_1) d\mu_1(x_1)=\int
h_{Q_2}^{u_2}(x_2)d\mu_2(x_2)=0,$ we have, for a.e. $x_2\in
X_2 $,
$$
    \int_{ X_1 } a_{k,\overline{R}}(x_1,x_2)d\mu_1(x_1)
    = 0
$$
and for a.e.~$x_1\in  X_1 $,
$$
    \int_{ X_2 } a_{k,\overline{R}}(x_1,x_2)d\mu_2(x_2)
    = 0,
$$
which yield that the conditions (i) and (ii) of (3) in
Definition~\ref{def of dyadic atom} hold. It remains to show
that $a_k$ satisfies the condition~(iii) of~(3).

To see this, we first note that
\begin{eqnarray*}
    \|a_{k,\overline{R}}\|_{2}
    &=& {1\over\lambda_k} \Big\|
        \sum_{R=Q_1\times Q_2 \in B_k,\ R\subset \overline{R}}\ \
        \sum_{u_1=1}^{M_{Q_1}}\sum_{u_2=1}^{M_{Q_2}} \langle f,\
        h_{Q_1}^{u_1}h_{Q_2}^{u_2} \rangle
        h_{Q_1}^{u_1}(x_1)h_{Q_2}^{u_2}(x_2)\Big\|_2.
\end{eqnarray*}
From the definition of $\lambda_k$, by applying the same
argument as for the estimates of $\|a_k\|_{2}$, we can obtain
that
$$
    \sum_{\overline{R}\in m(\widetilde{\Omega}_k)}
        \big\|a_{k,\overline{R}}\big\|^2_{L^2}
    \leq \mu(\widetilde{\Omega}_k)^{-1/2},
$$
which implies that condition~(iii) holds.

We now prove that $\sum_k|\lambda_k| \leq
C\|f\|_{H^1_{\mathcal{D}_1,\mathcal{D}_2}(\widetilde{X}) }$.

First note that by definition of $\Omega_{k+1}$ and
Remark~\ref{remark:duality seq spaces},
$$
    \int_{\widetilde{\Omega}_k\setminus \Omega_{k+1}}
        S_{d,d}(f)(x_1,x_2)^2\,d\mu_1(x_1)d\mu_2(x_2)
    \leq  2^{2(k+1)}\mu(\widetilde{\Omega}_k)   \leq  C2^{2(k+1)}\mu(\Omega_k).
$$
Moreover, by the Monotone Convergence Theorem and by
Remark~\ref{remark:duality seq spaces}, we have
\begin{eqnarray*}
    \lefteqn{\int_{\widetilde{\Omega}_k\setminus \Omega_{k+1}}
        S_{d,d}(f)(x_1,x_2)^2\,d\mu_1(x_1)d\mu_2(x_2)} \hspace{1cm}\\
    &&\geq \int_{\widetilde{\Omega}_k\setminus \Omega_{k+1}}
        \sum_{R=Q_1\times Q_2\in B_k}\sum_{u_1=1}^{M_{Q_1}}\sum_{u_2=1}^{M_{Q_2}}
        \big| \langle f, h_{Q_1}^{u_1}h_{Q_2}^{u_2}\rangle \big|^2
        \widetilde{\chi}_{Q_1}(x_1)\widetilde{\chi}_{Q_2}(x_2)\,d\mu_1(x_1)d\mu_2(x_2)\\
    &&\geq \sum_{R=Q_1\times Q_2\in B_k}\sum_{u_1=1}^{M_{Q_1}}\sum_{u_2=1}^{M_{Q_2}}
        \big| \langle f, h_{Q_1}^{u_1}h_{Q_2}^{u_2}\rangle \big|^2
        {\mu(\widetilde{\Omega}_k\setminus \Omega_{k+1}\, \cap R )\over \mu(R)}\\
    &&\geq 1/2 \sum_{R=Q_1\times Q_2\in B_k}\sum_{u_1=1}^{M_{Q_1}}\sum_{u_2=1}^{M_{Q_2}}
        \big| \langle f, h_{Q_1}^{u_1}h_{Q_2}^{u_2}\rangle \big|^2,
\end{eqnarray*}
since $R\in B_k$ implies that $\mu(
(\widetilde{\Omega}_{k+1}\backslash \Omega_{k+1}) \, \cap R
)\leq \mu(R)/2$.

From the definition of $\lambda_k$, we have
\begin{eqnarray*}
    \sum_k|\lambda_k|
    &=& C\sum_k\Big(\sum_{R=Q_1\times Q_2 \in B_k}\ \
        \sum_{u_1=1}^{M_{Q_1}} \sum_{u_2=1}^{M_{Q_2}}
        \big|\langle f,\ h_{Q_1}^{u_1}h_{Q_2}^{u_2} \rangle\big|^2\Big)^{1/2}
        \mu(\widetilde{\Omega}_k)^{1/2}\\
    &\leq& C\sum_k 2^{k+1} \mu(\widetilde{\Omega}_k)^{1/2}
        \mu(\widetilde{\Omega}_k)^{1/2}\\
    &\leq& C\sum_k2^{k}\mu(\Omega_k)\\
    &\leq& C\|f\|_{H^1_{\mathcal{D}_1,\mathcal{D}_2}(\widetilde{X})}.
\end{eqnarray*}

Conversely, suppose $f := \sum_k \lambda_k a_k$ where each
$a_k$ is an atom of
$H^1_{\mathcal{D}_1,\mathcal{D}_2}(\widetilde{X}) $ as in
Definition~\ref{def of dyadic atom}, and $\sum_k|\lambda_k|
\leq C < \infty $.

Then
\begin{eqnarray*}
    S_{d,d}(f)(x_1,x_2)
    = S_{d,d}\big(\sum_k\lambda_k a_k\big)(x_1,x_2)
    \leq \sum_k|\lambda_k| S_{d,d}\big(a_k\big)(x_1,x_2).
\end{eqnarray*}
It suffices to estimate the $L^1(\widetilde{X}) $ norm of
$S_{d,d}\big(a_k\big)(x_1,x_2)$. For the sake of simplicity, we
drop the subscript $k$ for the atom and now estimate
$\|S_{d,d}(a)\|_{L^1(\widetilde{X})}$, where $a$ is a dyadic
atom as in Definition~\ref{def of dyadic atom} with
support~$\Omega$.

Now let $ \bar{\Omega} = \cup_{R\in m(\Omega)} 100R $.  We have
\begin{eqnarray*}
    \|S_{d,d}\big(a\big)\|_{L^1(\widetilde{X})}
    = \|S_{d,d}\big(a\big)\|_{L^1(\bar{\Omega})}
        + \|S_{d,d}\big(a\big)\|_{L^1(\bar{\Omega}^c)}.
\end{eqnarray*}

It is obvious that $\|S_{d,d}\big(a\big)\|_{L^1(\bar{\Omega})}
\leq \mu(\bar{\Omega})^{1/2}
\|S_{d,d}\big(a\big)\|_{L^2(\bar{\Omega})} \leq C
\mu(\bar{\Omega})^{1/2} \|a\|_{L^2(\widetilde{X})} \leq C$,
where $C$ is a positive constant independent of~$a$.

From the definition of the atom~$a$, we have
\begin{eqnarray*}
    \|S_{d,d}\big(a\big)\|_{L^1(\bar{\Omega}^c)}
    &\leq& \sum_{R\in m(\Omega)}\int_{(100R)^c}
         S_{d,d}(a_R)(x_1,x_2) \, d\mu_1(x_1) \, d\mu_1(dx_2).
\end{eqnarray*}
For each rectangle~$R = Q_1 \times Q_2$ that appears in this
sum, we have from the definition of the square function
$S_{d,d}(a_R)$ that
\begin{eqnarray*}
    S_{d,d}(a_R)(x_1,x_2)
    &:=& \Big\{\sum_{Q'_1\in\mathcal{D}_1} \sum_{Q'_2\in\mathcal{D}_2}
        \sum_{u_1=1}^{M_{Q'_1}-1}\sum_{u_2=1}^{M_{Q'_2}-1}
        \big| \langle a_R, h_{Q'_1}^{u_1}h_{Q'_2}^{u_2}\rangle
        \widetilde{\chi}_{Q'_1}(x_1)\widetilde{\chi}_{Q'_2}(x_2)\big|^2
        \Big\}^{1/2}.
\end{eqnarray*}
Now if $Q_2$ is a child of~$Q'_2$, then $h_{Q'_2}$ is constant
on $Q_2$, and so
\begin{eqnarray*}
    \langle a_R, h_{Q'_1}^{u_1} h_{Q'_2}^{u_2} \rangle
    &=& \int_{Q_1} \! \int_{Q_2} a_R(x_1,x_2) h_{Q'_1}^{u_1}(x_1) h_{Q'_2}^{u_2}(x_2) \, d\mu_1(x_1) \, d\mu_2(x_2)\\
    &=& \int_{Q_1} C \left[\int_{Q_2} a_R(x_1,x_2) \, d\mu_2(x_2)\right] h_{Q'_1}^{u_1}(x_1) \, d\mu_1(x_1)\\
    &=& 0
\end{eqnarray*}
by the cancellation property of~$a_R$. Similarly, if $Q_1$ is a
child of~$Q'_1$, then $\langle a_R, h_{Q'_1}^{u_1}
h_{Q'_2}^{u_2} \rangle = 0$. Clearly if $Q_i\cap Q'_i
=\emptyset$ for $i=1$ or $i=2$ then the inner product is also
zero.
Thus
\begin{eqnarray*}
    S_{d,d}(a_R)(x_1,x_2)
    &=& \Big\{\sum_{\substack{Q'_1\in\mathcal{D}_1,\\ Q'_1\subseteq Q_1}}
    \sum_{\substack{Q'_2\in\mathcal{D}_2,\\ Q'_2\subseteq Q_2}}
        \sum_{u_1=1}^{M_{Q'_1}-1}\sum_{u_2=1}^{M_{Q'_2}-1}
        \big| \langle a_R, h_{Q'_1}^{u_1}h_{Q'_2}^{u_2}\rangle
        \widetilde{\chi}_{Q'_1}(x_1)\widetilde{\chi}_{Q'_2}(x_2)\big|^2 \Big\}^{1/2} .
\end{eqnarray*}
This implies that $S_{d,d}(a_R)(x_1,x_2) $ is supported in~$R$.
Thus,
\[
    \int_{(100R)^c}
    S_{d,d}(a_R)(x_1,x_2) \, d\mu_1(x_1) \, d\mu_2(x_2)
    = 0.
\]

As a consequence, we have
\begin{eqnarray*}
    \|S_{d,d}\big(a\big)\|_{L^1(\widetilde{X})}
    \leq C,
\end{eqnarray*}
where $C$ is a positive constant independent of $a$. Then
\begin{equation*}
   \| S_{d,d}(f)\|_{L^1(\widetilde{X})}
   \leq \sum_k|\lambda_k| \|S_{d,d}\big(a_k\big)\|_{L^1(\widetilde{X})}
   \leq C\sum_k|\lambda_k|.
   \qedhere
\end{equation*}
\end{proof}

It remains to prove Theorem~\ref{thm structure of Hardy space}.

\begin{proof}[Proof of Theorem \ref{thm structure of Hardy space}]
First we prove the inclusion $H^1(X_1\times X_2) \subset
\sum_{t_1=1}^{T_1}\sum_{t_2=1}^{T_2}
H^1_{\mathcal{D}_1^{t_1},\mathcal{D}_2^{t_2}}(X_1\times X_2)$.
Suppose $f\in H^1(X_1\times X_2)\cap L^2(X_1\times X_2)$. From
Theorem \ref{theorem-Hp atom decomp}, we obtain that
\begin{eqnarray*}
    f
    = \sum_j\lambda_j a_j,
\end{eqnarray*}
where each $a_j$ is an $H^1(X_1\times X_2)$ atom as defined in
Definition~\ref{def-of-H1-atom}, and $\sum_j|\lambda_j| \leq
C\|f\|_{H^1(X_1\times X_2)}$.

Thus, from the Definition \ref{def-of-H1-atom}, we see that
$a_j$ is supported in an open set $\Omega_j\in X_1\times X_2$
with finite measure. Moreover, $a_j=\sum_{R\in m(\Omega_j)}
a_{j,R}$, where each $a_{j,R}$ is supported in $\overline{C}R$.
Since $R$ is a dyadic rectangle, we have
$R=Q_{\alpha_1}^{k_1}\times Q_{\alpha_2}^{k_2}$ for some
$k_1,k_2\in \mathbb{Z}$ and $\alpha_1\in \mathscr{A}_{k_1}$ and
$\alpha_2\in \mathscr{A}_{k_2}$. Thus, from the definition of
the dyadic cubes in Section 2.1, we further have $R\subset
B(x_{\alpha_1}^{k_1},C_1\delta^{k_1})\times
B(x_{\alpha_2}^{k_2},C_2\delta^{k_2})$. As a consequence, we
obtain that supp$a_{j,R}$ is contained in $
B(x_{\alpha_1}^{k_1},\overline{C}C_1\delta^{k_1})\times
B(x_{\alpha_2}^{k_2},\overline{C}C_2\delta^{k_2})
=\overline{C}R$.

Next, from Lemma~4.12 in \cite{HK}, we see that
$B(x_{\alpha_1}^{k_1},\overline{C}C_1\delta^{k_1})$ must be
contained in some $Q_1^{t_1}(\overline{k}_1,\beta_1) \in
\mathcal{D}^{t_1}$ for some $t_1$ in $\{1, 2, \ldots, T_1\}$.
Similarly, $B(x_{\alpha_2}^{k_2},\overline{C}C_2\delta^{k_2})$
must be contained in $Q_2^{t_2}(\overline{k}_2,\beta_2) \in
\mathcal{D}^{t_2}$ for some $t_2$ in $\{1, 2, \ldots, T_2\}$.
Moreover, there exists a constant $\widetilde{C}$ such that
$\mu(Q_1^{t_1}(\overline{k}_1,\beta_1) ) \leq
\widetilde{C}\mu(B(x_{\alpha_1}^{k_1},\overline{C}C_1\delta^{k_1}))
$ and that $\mu(Q_2^{t_2}(\overline{k}_2,\beta_2) ) \leq
\widetilde{C}\mu(B(x_{\alpha_2}^{k_2},\overline{C}C_2\delta^{k_2}))
$. As a consequence, $\supp a_{j,R}$ is contained in
$Q_1^{t_1}(\overline{k}_1,\beta_1) \times
Q_2^{t_2}(\overline{k}_2,\beta_2)$.

Thus, we can divide the summation $a_j=\sum_{R\in m(\Omega_j)}
a_{j,R}$ as follows:
\begin{eqnarray*}
    a_j
    = \sum_{t_1=1}^{T_1}\sum_{t_2=1}^{T_2}\sum_{R\in m(\Omega_j),
        \overline{C}R \subset Q_1^{t_1}(\overline{k}_1,\beta_1)
        \times Q_2^{t_2}(\overline{k}_2,\beta_2)}
        a_{j,R}.
\end{eqnarray*}
We now set
$$
   a_{j,t_1,t_2}
   = \sum_{R\in m(\Omega_j),
   \overline{C}R \subset Q_1^{t_1}(\overline{k}_1,\beta_1)
   \times Q_2^{t_2}(\overline{k}_2,\beta_2)}
   a_{j,R}.
$$
Then we can verify that $a_{j,t_1,t_2}$ is an
$H^1_{\mathcal{D}^{t_1}, \mathcal{D}^{t_2}}(X_1\times X_2)$
atom. Moreover, $f_{t_1,t_2}:=\sum_j\lambda_{j}
a_{j,t_1,t_2}\in H^1_{\mathcal{D}^{t_1},
\mathcal{D}^{t_2}}(X_1\times X_2)$ with
$$
    \|f_{t_1,t_2}\|_{H^1_{\mathcal{D}^{t_1}, \mathcal{D}^{t_2}}(X_1\times X_2)}
    \leq \sum_j|\lambda_j|
    \leq C\|f\|_{H^1(X_1\times X_2)}.
$$

Hence, we obtain that
\begin{eqnarray*}
    f
    = \sum_j\lambda_j a_j
    = \sum_{t_1=1}^{T_1}\sum_{t_2=1}^{T_2} f_{t_1,t_2},
\end{eqnarray*}
where $f_{t_1,t_2}\in H^1_{\mathcal{D}^{t_1},
\mathcal{D}^{t_2}}(X_1\times X_2)$, and $\sum_{t_1=1}^{T_1}
\sum_{t_2=1}^{T_2} \|f_{t_1,t_2}\|_{H^1_{\mathcal{D}^{t_1},
\mathcal{D}^{t_2}}(X_1\times X_2)} \leq
CT_1T_2\|f\|_{H^1(X_1\times X_2)}.  $

This implies that $H^1(X_1\times X_2)\cap L^2(X_1\times
X_2)\subset \sum_{t_1=1}^{T_1}\sum_{t_2=1}^{T_2}
H^1_{\mathcal{D}_1^{t_1},\mathcal{D}_2^{t_2}}(X_1\times X_2)$.
Moreover, note that $H^1(X_1\times X_2)\cap L^2(X_1\times X_2)$
is dense in $H^1(X_1\times X_2)$, we obtain that $H^1(X_1\times
X_2)\subset \sum_{t_1=1}^{T_1}\sum_{t_2=1}^{T_2}
H^1_{\mathcal{D}_1^{t_1},\mathcal{D}_2^{t_2}}(X_1\times X_2)$.

Second, we prove that
$H^1_{\mathcal{D}_1^{t_1},\mathcal{D}_2^{t_2}}(\widetilde{X})\subset
H^1(\widetilde{X})$, for every $t_1=1,\ldots,T_1;\
t_2=1,\ldots,T_2$. To see this, we now apply the atomic
decomposition for the dyadic product Hardy space
$H^1_{\mathcal{D}_1^{t_1},\mathcal{D}_2^{t_2}}(\widetilde{X})$,
i.e.,  Theorem \ref{thm atom for dyadic H1}.

For $f\in H^1_{\mathcal{D}_1^{t_1},
\mathcal{D}_2^{t_2}}(\widetilde{X}) \cap L^2(\widetilde{X})$,
we have $f=\sum_{k}\lambda_k a_k$, where each $a_k$ is a dyadic
atom as in Definition~\ref{def of dyadic atom} and
$\sum_k|\lambda_k|\leq C \|f\|_{H^1_{\mathcal{D}_1^{t_1},
\mathcal{D}_2^{t_2}}(\widetilde{X})}$.

Note that the dyadic atom in Definition \ref{def of dyadic
atom} is a special case of the $H^1(\widetilde{X})$ atom. We
obtain that $f \in H^1(\widetilde{X})$ and that
$\|f\|_{H^1(\widetilde{X})} \leq
C\|f\|_{H^1_{\mathcal{D}_1^{t_1},
\mathcal{D}_2^{t_2}}(\widetilde{X})}$, which yields that
$H^1_{\mathcal{D}_1^{t_1}, \mathcal{D}_2^{t_2}}(\widetilde{X})
\cap L^2(\widetilde{X}) \subset H^1(\widetilde{X})$.

As a consequence, we obtain that
$H^1_{\mathcal{D}_1^{t_1},\mathcal{D}_2^{t_2}}(\widetilde{X})
\subset H^1(\widetilde{X})$.
\end{proof}



\end{document}